\title{Lang-Weil Type Estimates in Finite Difference Fields}
\author{Martin Hils}
\address{Institut f\"{u}r Mathematische Logik und Grundlagenforschung, Universit\"{a}t M\"{u}nster, Einsteinstr. 62, D-48149 M\"{u}nster, Germany}
\email{hils@uni-muenster.de}
\thanks{}
\author{Ehud Hrushovski}
\address{Mathematical Institute, University of Oxford, Andrew Wiles Building,
Radcliffe Observatory Quarter, Woodstock Road, Oxford OX2 6GG, UK}
\email{ehud.hrushovski@maths.ox.ac.uk}
\author{Jinhe Ye}
\address{Mathematical Institute, University of Oxford, Andrew Wiles Building,
Radcliffe Observatory Quarter, Woodstock Road, Oxford OX2 6GG, UK}
\email{jinhe.ye@maths.ox.ac.uk}
\author{Tingxiang Zou}
\address{Mathematical Institute, University of Bonn, Endenicher Allee 60, 53115 Bonn, Germany}
\email{tzou@math.uni-bonn.de}
\subjclass[2020]{Primary 12H10, 11U09 Secondary 03C60, 03C20 }
\keywords{finite difference fields; Lang-Weil estimates; ultraproducts; non-standard Frobenius; coarse dimension; pseudofinite model theory}
\DeclareFontFamily{U}{BOONDOX-calo}{\skewchar\font=45 }
\DeclareFontShape{U}{BOONDOX-calo}{m}{n}{
  <-> s*[1.05] BOONDOX-r-calo}{}
\DeclareFontShape{U}{BOONDOX-calo}{b}{n}{
  <-> s*[1.05] BOONDOX-b-calo}{}
\DeclareMathAlphabet{\mathcalboondox}{U}{BOONDOX-calo}{m}{n}
\SetMathAlphabet{\mathcalboondox}{bold}{U}{BOONDOX-calo}{b}{n}
\DeclareMathAlphabet{\mathbcalboondox}{U}{BOONDOX-calo}{b}{n}
\DeclareMathOperator*{\forkindep}{\raise0.2ex\hbox{\ooalign{\hidewidth$\vert$\hidewidth\cr\raise-0.9ex\hbox{$\smile$}}}}
\newcommand{\monster}{\mathfrak{C}}
\newcommand{\trfdeg}{\operatorname{trf.\!deg}}
\newcommand{\frob}{\mathrm{Frob}}
\newcommand{\inv}{\mathrm{inv}}
\newcommand{\Frac}{\operatorname{Frac}}
\newcommand{\acl}{\operatorname{acl}}
\newcommand{\tp}{\operatorname{tp}}
\newcommand{\qftp}{\operatorname{qftp}}
\newcommand{\val}{\operatorname{val}}
\newcommand{\res}{\operatorname{res}}
\newcommand{\Chara}{\operatorname{char}}
\newcommand{\ACFA}{\mathrm{ACFA}}
\newcommand{\rf}{{\mathbf k}}
\newcommand{\vf}{{\mathbf{K}}}
\newcommand{\vg}{{\mathbf \Gamma}}
\newtheorem*{claim-star}{Claim}
\newtheorem{theorem}{Theorem}[section] 
\newtheorem{lemma}[theorem]{Lemma}
\newtheorem{prop-def}[theorem]{Proposition-Definition}
\newtheorem{corollary}[theorem]{Corollary}
\newtheorem{fact}[theorem]{Fact}
\newtheorem{fact-eh}[theorem]{Fact(?)}
\newtheorem{question}[theorem]{Question}
\newtheorem{proposition-eh}[theorem]{Proposition(?)}
\newtheorem*{theorem-star}{Theorem}
\newtheorem*{conjecture-star}{Conjecture}
\newtheorem*{question-star}{Question}
\newtheorem*{lemma-star}{Lemma}
\newtheorem*{thmA}{Theorem A}
\newtheorem*{thmB}{Theorem B}
\newtheorem*{thmC}{Theorem C}
\newtheorem*{theorem*}{Theorem}
\theoremstyle{definition}
\newtheorem{definition}[theorem]{Definition}
\newtheorem{example}[theorem]{Example}
\newtheorem{remark}[theorem]{Remark}
\theoremstyle{remark}
\newcommand{\Aa}{\mathbb{A}}
\newcommand{\Ff}{\mathbb{F}}
\newcommand{\Qq}{\mathbb{Q}}
\newcommand{\Zz}{\mathbb{Z}}
\newcommand{\Nn}{\mathbb{N}}
\newcommand{\Oo}{\mathcal{O}}
\newcommand{\mm}{\mathfrak{m}}
\newcommand{\rld}{\mathrm{rld}}
\newcommand{\ld}{\mathrm{ld}}
\newcommand{\tdim}{\mathrm{trf.dim}}
\newcommand{\adim}{\mathrm{alg.dim}}
\begin{document}
\begin{abstract}
    We prove a uniform estimate of the number of points for difference algebraic varieties in finite difference fields in the spirit of Lang-Weil. More precisely, we give uniform lower and upper bounds for the number of rational points of a difference variety in terms of its transformal dimension.  As a main technical ingredient, we prove an equidimensionality result for Frobenius reductions of difference varieties.
\end{abstract}
\maketitle

\tableofcontents
\section{Introduction}

The Lang-Weil estimates give the asymptotics of the number of points of a variety in a large finite field. Here we consider a  difference variety analogue:   
the number of solutions of a number of polynomial equations involving $x^p$ 
 in a large finite field $\Ff_{p^n}$; we seek an expression uniform in $p$ and $n$. We find the right exponent, but not an exact leading coefficient; indeed no
 single limiting value of this coefficient can be expected, as arithmetic questions intervene.

The proof requires several key inputs from the model theory of difference fields, the classical Lang-Weil estimate, and some recent developments towards non-Archimedean geometry in the difference setting~\cite{DoHr22}. For an introduction to model theory, see~\cite{tent-ziegler}. For a brief introduction to the model theory of difference fields, see~\cite{zoe-icm}. Before presenting the precise results and the idea of the proof, we recall some history of the Lang-Weil estimate.
\subsection{Weil Conjectures and Lang-Weil Estimate.}
The Weil conjectures are a set of mathematical conjectures proposed by Weil \cite{weil49} in the late 1940s. These conjectures are fundamental in algebraic geometry and have profound implications for number theory. The Weil conjectures are concerned with counting the number of rational points of algebraic varieties over finite fields. Specifically, let $X$ be a geometrically integral projective algebraic variety defined over a finite field $\Ff_q$, where $q$ is a prime power, the Weil conjectures predict the behavior of $\#X(\Ff_q)$\footnote{We use both $\# S$ and $|S|$ to denote the size of a set $S$.} as $q$ increases via encoding them in the Weil zeta function. The conjectures were proven by Dwork~\cite{dwork}, Grothendieck~\cite{grothendieck-L-function}, and Deligne~\cite{deligne-weil-I}. While the Weil conjectures give a precise formula for the number of rational points on varieties over finite fields, some weaker versions of the predictions were known long before the resolution of the Weil conjectures. Most notably, the Lang-Weil estimate~\cite{lang-weil-original} provides the asymptotic behavior of $\#X(\Ff_q)$ as $q$ goes to infinity.
Here is the precise statement of the classical Lang-Weil estimate.
\begin{theorem*}[Lang-Weil Estimate]\label{lang-weil-original}
    Given integers $n,d,r$, there is a constant $C=C(n,d,r)$ such that for every finite field $\Ff_q$ and every geometrically integral variety $X\subseteq\mathbb{P}^n$ over $\Ff_q$ of dimension $r$ and degree $d$, we have
    \begin{align*}
        |\#X(\Ff_q)-q^r|\leq (d-1)(d-2)q^{r-1/2}+Cq^{r-1}.
    \end{align*}
\end{theorem*}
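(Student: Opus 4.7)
The plan is to argue by induction on the dimension $r$, reducing via hyperplane sections to the case of curves, where the square-root cancellation $(d-1)(d-2)q^{r-1/2}$ is supplied by the Riemann Hypothesis for curves over finite fields.

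For the base case $r=1$, let $C \subseteq \Pp^n$ be a geometrically integral curve of degree $d$ over $\Ff_q$, and pass to its normalization $\widetilde{C}$, a smooth projective geometrically integral curve whose genus $g$ is bounded by $\binom{d-1}{2}$ (by projecting birationally to a plane model of degree $d$ and applying the adjunction bound). Weil's theorem then yields
\[ |\#\widetilde{C}(\Ff_q) - (q+1)| \;\leq\; 2g\sqrt{q} \;\leq\; (d-1)(d-2)\sqrt{q}. \]
Since the normalization map $\widetilde{C} \to C$ is a bijection off a set whose size is bounded by a function of $d$ and $n$ alone, this gives the estimate for $r = 1$.

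For the inductive step $r \geq 2$, I would use a double-counting argument over hyperplane sections. Each $\Ff_q$-point of $X$ lies in exactly $N_{n-1} := \#\Pp^{n-1}(\Ff_q)$ hyperplanes of $\Pp^n$ defined over $\Ff_q$, so
\[ \#X(\Ff_q) \;=\; \frac{1}{N_{n-1}} \sum_{H} \#(X \cap H)(\Ff_q), \]
with $H$ ranging over $\Ff_q$-rational hyperplanes. An effective Bertini theorem shows that the set $B$ of hyperplanes for which $X \cap H$ fails to be geometrically integral of dimension $r-1$ and degree $\leq d$ forms a proper closed subvariety of the dual projective space, with degree bounded purely in terms of $n, d, r$; hence $\#B(\Ff_q) = O(q^{n-1})$ with a constant uniform in $q$. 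On good hyperplane sections I apply the inductive hypothesis; on bad ones I use a crude upper bound $\#(X \cap H)(\Ff_q) \leq c(d,n)\, q^{r-1}$ (from projection plus B\'ezout). Summing and dividing by $N_{n-1} \sim q^{n-1}$ then produces the main term $q^r$, the Weil-type error $(d-1)(d-2)\, q^{r-1/2}$ (the inductive error $(d-1)(d-2) q^{r-3/2}$ being multiplied by $N_n/N_{n-1} \sim q$), and a lower-order error absorbed into $C(n,d,r)\, q^{r-1}$.

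The principal obstacle is the base case: Weil's Riemann Hypothesis for curves is a deep input and must be invoked as a black box, typically via his theory of the Jacobian. A secondary but genuine technical point is obtaining an \emph{effective} Bertini theorem, that is, a bound on the degree of the bad locus $B$ depending only on $n,d,r$, so that $\#B(\Ff_q)$ grows at most like $q^{n-1}$ uniformly in $q$; one must also cope with the possible imperfection of $\Ff_q$, which is handled by working with the geometric variety and descending. With these two ingredients in hand, the induction closes by careful bookkeeping of the error exponents.
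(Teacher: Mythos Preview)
The paper does not prove this statement at all: it is quoted in the Introduction as the classical Lang-Weil estimate, with a citation to the original 1954 paper of Lang and Weil, and is used purely as background. There is thus no proof in the paper to compare your proposal against.

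For what it is worth, your outline is essentially the strategy of the original Lang--Weil argument: induction on the dimension $r$ via hyperplane sections, with the base case supplied by Weil's Riemann Hypothesis for curves. The two technical inputs you flag (Weil for curves as a black box, and an effective Bertini-type control on the bad locus of hyperplane sections) are exactly the ones needed, and your bookkeeping of the error exponents is correct in spirit. So the proposal is a faithful sketch of the classical proof, but it is not something the present paper undertakes or claims to undertake.
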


For an algebraic variety $X$ defined over $\mathbb{Z}$ and each prime number $p$, we use $X_p$ to denote its fiber over the prime $p$. The above estimate gives us an estimate of $\#X_p(\Ff_q)$ where $q=p^m$ for some $m$ subject to the condition that $X_p$ is geometrically integral. Analogously, similar estimates for affine varieties can be obtained using this formula.
\subsection{Difference Lang-Weil Estimate}
One may wish to generalize this estimate to other settings, where one could study more complicated sets other than varieties, either obtained by first-order quantifiers or given in some expansions of fields. In the first direction, recall that a pseudofinite field is a perfect pseudo-algebraically closed (PAC)\footnote{A field $K$ is pseudo-algebraically closed if for any geometrically integral variety $X/K$, one has $X(K)\neq \varnothing$.} field with absolute Galois group $\widehat{\mathbb{Z}}$. The seminal work of Ax~\cite{Ax-fin} shows that this is equivalent to being a model of the asymptotic theory of the finite fields (a sentence is in the asymptotic theory of finite fields iff it is true for sufficiently large finite fields). Counting in finite fields admits natural extensions to pseudofinite fields, and the Lang-Weil estimate admits a generalization to definable sets in pseudofinite fields by the work of Chatzidakis, van den Dries, and Macintyre \cite{chatzidakis1992definable}.

In a parallel direction, model theory provides a uniform framework for looking at tame expansions of fields. The model theory of valued fields, differential fields, and difference fields are particularly well-studied expansions. Since our primary interest lies in counting in finite structures, the differential and valuation in these cases are trivial, and thus do not provide interesting information. However, the difference setting remains interesting.

Recall that a difference field is a field with a distinguished endomorphism. For example, a finite field with some iterate of the Frobenius is a finite difference field. Difference varieties can be thought of as generalizations of algebraic varieties, where the defining polynomials involve the given automorphism. It is natural to count rational points of difference algebraic varieties in a finite difference field. 
As in classical algebraic geometry \`a la Weil, one works in an ambient algebraically closed field $K$ and all varieties $V$ are identified with $V(K)$. In the difference setting, the theory ACFA, which was studied extensively in~\cite{ChHr99} and \cite{CHP}, provides the universal domains for difference algebraic geometry.  Many geometric notions admit difference variety analogues; in particular the {\em transformal dimension} of a difference variety $X$ is analogous to the dimension of an algebraic variety; for affine varieties it equals the maximum number of functions on the variety that are independent in the sense of satisfying no difference equation.  If the difference operator is specialized to a large Frobenius map $q$, one obtains an algebraic variety $M_q(X)$ , whose dimension is at most the transformal dimension of $X$.   A substantial contrast is that the algebraic closure of a difference field is not unique; see   Example~\ref{eg:dim-fail} for an effect of this on dimension.

ACFA is the model companion of the theory of difference fields. It is model complete and simple, a tameness property in the sense of Shelah~\cite{classification}. The work of EH~\cite{HrushovskiFrobenius} shows that ACFA is the asymptotic theory of $(\Ff_q^{alg},\frob_q)$ as $q \to \infty$. The key geometric input 
is a generalization of the Lang-Weil estimate to zero-dimensional difference varieties.   Here Lang-Weil is viewed geometrically, as estimating the size of a specific intersection of difference varieties of complementary dimension, namely an algebraic correspondence with the graph of Frobenius.   See also 
~\cite{ShuVar21}
for another proof.    

We are interested in a different generalization of Lang-Weil. Finite fields endowed with a Frobenius map are the `closed points' of the world of difference algebra, in the same way that finite fields play this role for ordinary commutative algebra.   We thus 
attempt to estimate the number of solutions of a difference variety 
in a finite difference field.  We achieve a nontrivial estimate in case the field is sufficiently large compared to the structural Frobenius map (see Theorem~B or Theorem~\ref{thm-finiteryCount}).

A geometric interpretation of this problem  - not mentioning rational points - would involve two commuting automorphisms,
realized by the $p$ and $q$-Frobenius.  When $q$ is a a high power of $p$, this theory interprets bounded arithmetic on the smaller field $\Ff_p$.   Perhaps our results hint at a possible theory relative to $\Ff_p$.   In any case  
 we will remain with the more arithmetic interpretation.

We are now able to state our question more precisely and give a brief sketch of the proof:

 Let $(D,\sigma)$ be a domain $D$ with an injective endomorphism $\sigma$ and $(P_i)_{i\leq N}$ be a finite collection of difference polynomials in variables $X_1,\ldots,X_n$ over $D$. In other words, $P_i$ lies in the polynomial ring $D[X_1,\ldots,X_n,X^\sigma_1,\ldots,X^{\sigma}_n,X^{\sigma^2}_1,\ldots]$. Like in the algebraic setting, we denote by $V$ the affine difference variety defined by $(P_i)_{i\leq N}$. Consider a difference ring homomorphism $\eta: D\to (\Ff_{p^t},\frob_q)$, where $q$ is a power of the prime $p$. Let $V^\eta$ denote the difference variety defined by the difference polynomials $(\eta(P_i))_{i\leq N}$.
We would like to count the number of rational points of $V^\eta$ in $(\Ff_{p^t},\frob_q)$. Namely, we need to estimate the size of the set of zeros of $(\eta(P_i))_{i\leq N}$ in $(\Ff_{p^t},\frob_{q})$, equivalently, the zero set in $\Ff_{p^t}$ of algebraic polynomials obtained from $\eta(P_i)$ by interpreting $X_i^{\sigma^m}$ as $ X_i^{q^m}$. Let us denote the resulting algebraic polynomial by $M_q(\eta(P_i))$ and the affine algebraic variety defined by $(M_q(\eta(P_i)))_{i\leq N}$ by $M_q(V^\eta)$. Our task is to estimate $\# M_q(V^\eta)(\Ff_{p^t})$ for large $t$, uniformly in terms of $p, q, t$ and $\eta$. 

The upper bound is easy (but optimal in this generality).  In the case of transformal dimension zero, we use the `trivial upper bound' of \cite{HrushovskiFrobenius}; for the case of a single difference polynomial $P_1$, it amounts simply to the degree of $M_q(P_1)$.  This bound applies to the number of solutions in $\Ff_p^{alg}$; we do not use here the additional information on rationality. At the opposite extreme, when the difference variety is the affine space, the number of solutions is $p^{tn}$.   The general case follows from these two by d\'evissage.

The challenge thus lies in the lower bound.   Let us mention a few issues around this.  First, $M_q(V^\eta)$ may be empty 
for infinitely many $q$, see Example~\ref{eg:dim-fail}.   In this example the problem is simply due to the incompleteness of ACFA$_0$. $V_0$ could simply be the two-point difference variety of a cube root of $3$ {\em in the fixed field}.   Then for $p=2 \mod 3$ and odd $t$, there can be no solution in $\Ff_{p^t}$.
Of course the same will be true for any higher dimensional $V$ admitting a morphism into this $V_0$.  

Secondly there arises the question of the number of irreducible components. Again this may occur due to a morphism onto a zero-transformal-dimesional variety $V_1$;
the number of components of $V$ in $\Ff_{p^t}$ will be at least the number of points of $V_1$.    This already accounts for the gap between our lower and upper bounds.  

There are also additional mechanisms for reducibility.  
For example, consider a difference polynomial $P:=X^\sigma X-Y^2$, it is not a product of two proper difference polynomials, while the corresponding algebraic polynomial $M_p(P):=X^{p+1}-Y^2$ is not irreducible for all $p\neq 2$.
This one can still be accounted for at the difference variety level, but we presume that rare cases will exist where $M_p(P)$ factorizes for non-uniform reasons, and do not know if this could happen for infinitely many $p$.

In case $M_p(V)$ has several components, the number of components rational over $F_{p^t}$ is the essential information; this aspects is already familiar in applications of the classical Lang-Weil.

The principal difficulty that we deal with here is, however, special to the difference setting; namely   $M_q(V)$ may be a variety of lower than expected dimension. This cannot happen for a single difference polynomial,  nor more generally if $V$ is cut out by $d'$ difference equations, where $d'$ is the codimension of $V$. But in general, $V$ may not be a `complete intersection' in this sense. We do not know if every difference variety can be fibered over a zero-transformal-dimensional one with 'complete intersection' fibers.  It is here that we need the theory of valued difference fields, allowing us to perturb solutions outside a smaller `singular' difference subvariety and thus show that $M_q(V)$ will locally have the expected dimension, regardless of the complete intersection question.

Our key result, Theorem~\ref{thm-equiDim},  resolves the above issues in the following terms:  There is a special difference subvariety $V_s$ of $V$ of strictly smaller transformal dimension, such that
for almost all $q$, 
$M_q((V\setminus V_s)^\eta)$ breaks into absolutely irreducible components of equal dimension $d$ for almost all $q$ and all $\eta$. 

Here is the precise statement.
\begin{thmA}
Let $V$ be a difference variety defined over a difference domain $D$ with transformal dimension $d>0$. Then there is a difference subvariety $V_s$ over $D$ of transformal dimension $<d$ and a difference domain $D'$ finitely generated over $D$ and contained in the fraction field $\Frac(D)$ and a constant $C$, such that for all $q>C$ and for all homomorphisms $\eta:D'\to (\Ff_p^{alg},\frob_q)$, the variety $M_q((V_s)^\eta)$ has algebraic dimension strictly smaller than $d$ and $M_q((V\setminus V_s)^\eta)$ is either empty or equidimensional of algebraic dimension $d$.
\end{thmA}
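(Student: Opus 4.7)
The plan is to combine the $\sigma$-irreducible decomposition of $V$ with the transfer between generic and residue fibres for valued difference fields developed in~\cite{DoHr22}, in order to produce rational points in $(\Ff_p^{alg},\frob_q)$ of the expected transformal dimension.

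First I would reduce to the case where $V$ is absolutely $\sigma$-irreducible of transformal dimension exactly $d$: any $\sigma$-component of $V$ of strictly smaller transformal dimension is absorbed into $V_s$ from the outset, and after enlarging $D$ to a suitable finitely generated extension $D'\subseteq\Frac(D)$ (inverting finitely many denominators) one may assume that this decomposition, as well as absolute $\sigma$-irreducibility, is already defined over $D'$. Fix a transformal transcendence basis $x_1,\dots,x_d$ of the $\sigma$-function field of $V$ over $D$, and let $\bar a$ denote the generic point of $V$ in a sufficiently saturated model of $\ACFA$, so that $a_1,\dots,a_d$ are $\sigma$-algebraically independent over $D$ and the remaining coordinates are $\sigma$-algebraic over $D\langle a_1,\dots,a_d\rangle$. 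The bound $\dim M_q(V^\eta)\le d$ is the ``trivial upper bound'' of~\cite{HrushovskiFrobenius} applied after the dévissage recalled in the introduction; hence once $M_q((V\setminus V_s)^\eta)$ is nonempty, every irreducible component automatically has algebraic dimension at most $d$. The content of Theorem~A is therefore the matching lower bound on each component, together with the fact that the locus where this bound fails is captured by a proper $\sigma$-subvariety of transformal dimension $<d$, uniformly in $q$ and $\eta$.

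For the lower bound I would apply the non-archimedean difference machinery of~\cite{DoHr22}. For each sufficiently large $q$ one builds a valued difference field $(K,v,\sigma)$ with residue field $(\Ff_p^{alg},\frob_q)$ whose generic fibre contains a model of $\ACFA$ in which the generic point $\bar a$ of $V^\eta$ can be realised; specialising $\bar a$ to the residue field then yields a point of $M_q(V^\eta)(\Ff_p^{alg})$ whenever this specialisation avoids the obstruction locus of the relevant lifting/smoothness statement of \emph{loc.\ cit.} That obstruction locus is $\sigma$-constructible, is defined uniformly from the data of $V$, and has transformal dimension strictly less than $d$ relative to $V$; it therefore cuts out the desired proper $\sigma$-subvariety $V_s\subsetneq V$. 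Running the argument at a generic point of each absolutely irreducible component of $M_q((V\setminus V_s)^\eta)$ produces on that component a point of algebraic transcendence degree $d$ over $\eta(D')$, which combined with the upper bound gives equidimensionality.

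The main obstacle is the \emph{uniformity} of this lifting statement in $q$ and $\eta$: one needs to show that the obstruction to transferring a generic residue-field point to the generic fibre of a valued difference field is cut out by $\sigma$-equations of transformal dimension $<d$, independently of $p$, $q$ and the choice of $\eta$. Compactness together with the theorem of~\cite{HrushovskiFrobenius} that $\ACFA$ is the asymptotic theory of the non-standard Frobenius converts ``all sufficiently generic $q$'' into ``all $q>C$'', and the finitely many exceptional denominators arising along the way are absorbed into the enlarged ring $D'$.
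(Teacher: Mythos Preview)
Your outline has the right ingredients (ultraproducts, $\widetilde{\omega VFA}$, compactness for uniformity) but the core lifting argument runs in the wrong direction, and as a result the step that is supposed to yield equidimensionality does not work.

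You propose to realise the $\sigma$-generic point $\bar a$ of $V^\eta$ in the valued field and then specialise it to the residue field $(\Ff_p^{alg},\frob_q)$. This produces \emph{one} point of $M_q(V^\eta)$, but it tells you nothing about the other irreducible components. Your sentence ``running the argument at a generic point of each absolutely irreducible component of $M_q((V\setminus V_s)^\eta)$ produces on that component a point of algebraic transcendence degree $d$'' is circular: the generic point of a component already has transcendence degree equal to the dimension of that component, so if the component happened to have dimension $<d$ you cannot manufacture a point of transcendence degree $d$ on it by any specialisation argument. There is also no mechanism in your sketch linking a given component of $M_q(V^\eta)$ back to the $\sigma$-generic of $V$.

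The paper's argument goes the other way. The subvariety $V_s$ is defined (Theorem~\ref{thm-main1}) so that for every residue point $x\in(V\setminus V_s)(\rf)$ in a model of $\widetilde{\omega VFA}$, the fibre $\res^{-1}(x)\cap V(\mathcal{O})$ has topological dimension $d$; concretely, some coordinate projection of this fibre contains an open ball in $\mathcal{O}^d$. Equidimensionality is then proved by contradiction: assume some component $S$ of $M_q((V\setminus V_s)^\eta)$ has algebraic dimension $<d$, pick $x_i\in S$ avoiding the other components, pass to an ultraproduct $M\models\widetilde{\omega VFA}$, and observe that $\res^{-1}(x)\cap V(\mathcal{O})$ must land in the Zariski closure $Y$ of $S$ (since if a lift were on another component, so would its residue, by Fact~\ref{F:Lift}). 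Now $\pi(\res^{-1}(x)\cap V(\mathcal{O}))\subseteq\pi(Y(\mathcal{O}))$ contains a $d$-dimensional open ball, contradicting $\dim Y<d$ via the equality of topological and algebraic dimension in ACVF. The direction residue $\to$ valuation ring, together with the containment in the specific low-dimensional component, is exactly the missing idea in your sketch.
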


A brief remark about the proof of the theorem before we move on. To define $V_s$, we actually embed our difference field into a model of $\ACFA$ with a natural non-Archimedean topology coming from the work of Dor and EH~\cite{DoHr22} where such structures are called models of $\widetilde{\omega VFA}$. See Section~\ref{sec: vdf} for more details about this structure. More explicitly, the non-Archimedean topology is used in the following way. The subvariety $V_s$ is the closure of the points $p$ where an analogue of the implicit function theorem fails at $p$. Moreover, the proof yields that $V_s$ does not depend on the choice of embeddings into models of $\widetilde{\omega VFA}$. Though $V_s$ is a pure difference subvariety, our method uses non-difference algebraic data to find it. It is thus natural to ask:
\begin{question}
    Is there a purely difference algebraic description of $V_s$?
\end{question}

 Note that the proof of Theorem A and the above question require an interplay between the difference algebraic data and the non-Archimedean topology, it seems natural to ask if there exists a tame theory of analytic functions for difference fields.

With Theorem A in hand, it remains to find a smooth $\Ff_{p^t}$-point of $M_q((V\setminus V_s)^\eta)$. If such a point exists, the irreducible component (over $\Ff_{p^t}$) containing $x$ will be absolutely irreducible.

To find a smooth point, we extend the partial derivatives to difference polynomials and use the corresponding Jacobian criterion and generic smoothness. However, in characteristic $p>0$, generic smoothness fails as in the algebraic setting. One needs to modify the variety using the twisting reduction (see Definition~\ref{Def-Twist}), which is the difference analogue of the relative Frobenius (see~\cite[Section 1.2]{ez} for example). This procedure gives a stratification of $V$ via locally closed difference varieties which are smooth after twisting reduction. 

In conclusion, we establish the following dichotomy (see Theorem~\ref{thm-finiteryCount} for the final uniform version).

\begin{thmB}[Difference Lang-Weil Estimate]
    Let $X$ be a difference variety of transformal dimension $d>0$ defined over a difference domain $D$. Then there is a difference subvariety $X_{\xi}$ defined over $D$ of transformal dimension $<d$, constants $c,C>0$ and $D'\subseteq\Frac(D)$ finitely generated over $D$ such that for all $q>C$, for all homomorphisms $\eta:D'\to (\mathbb{F}_{p^t},\frob_q)$:
    \begin{enumerate}
    \item 
    Either there is a point $a\in (X^\eta\setminus X^\eta_{\xi})(\mathbb{F}_{p^t},\frob_q)$, and we have \[q^cp^{dt}\geq \#X^\eta(\mathbb{F}_{p^t},\frob_q)\geq p^{dt}-q^cp^{t(d-1/2)};\]
    \item 
    Or $X^\eta(\mathbb{F}_{p^t},\frob_q)\subseteq X^\eta_{\xi}(\mathbb{F}_{p^t},\frob_q)$, and $\# X^\eta(\mathbb{F}_{p^t},\frob_q)\leq q^cp^{t(d-1)}$.
\end{enumerate}
\end{thmB}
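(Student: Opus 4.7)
The plan is to derive Theorem~B by combining Theorem~A with a ``smooth point'' construction and the classical Lang--Weil estimate. First, I would invoke Theorem~A to obtain a difference subvariety $V_s \subseteq X$ of transformal dimension $< d$ such that, for almost all $q$ and all admissible $\eta$, the Frobenius reduction $M_q((X \setminus V_s)^\eta)$ is either empty or equidimensional of algebraic dimension $d$. I would then enlarge $V_s$ by adjoining a further proper difference subvariety $X_{\mathrm{sing}}$ of transformal dimension $< d$ supporting the ``non-smooth'' locus of $X$. Concretely, using the twisting reduction of Definition~\ref{Def-Twist} to circumvent the failure of generic smoothness in positive characteristic, one stratifies $X$ into finitely many locally closed difference subvarieties each of which becomes generically smooth after a suitable twist. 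The twisted Jacobian criterion then picks out a proper difference subvariety outside of which every rational point of $M_q(X^\eta)$ is a smooth point of an irreducible component of top dimension~$d$. Set $X_\xi := V_s \cup X_{\mathrm{sing}}$; this remains of transformal dimension $< d$.

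Next, I would produce a uniform polynomial-in-$q$ degree bound. Since $X$ (and hence $X_\xi$) is cut out by difference polynomials of bounded $\sigma$-degree, the algebraic polynomials defining $M_q(X^\eta)$ and $M_q(X^\eta_\xi)$ have total degree $O(q^c)$ for some $c = c(X,D')$. The ``trivial upper bound'' of \cite{HrushovskiFrobenius}, applied to $M_q(X^\eta)$ (of algebraic dimension at most $d$) and to $M_q(X^\eta_\xi)$ (of algebraic dimension at most $d - 1$), then yields, after enlarging $c$,
\[
\#X^\eta(\Ff_{p^t},\frob_q) \le q^c p^{dt}, \qquad \#X^\eta_\xi(\Ff_{p^t},\frob_q) \le q^c p^{t(d-1)}.
\]
The second estimate already closes case~(2).

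For case~(1), suppose there exists $a \in (X^\eta \setminus X^\eta_\xi)(\Ff_{p^t},\frob_q)$. By construction of $X_\xi$, the image of $a$ on $M_q(X^\eta)$ is a smooth $\Ff_{p^t}$-rational point; hence it lies on a unique irreducible component $C$. Smoothness at the rational point $a$ forces $C$ to be defined over $\Ff_{p^t}$ and geometrically irreducible, while the equidimensionality from Theorem~A forces $\dim C = d$. The degree of $C$ is bounded by that of $M_q(X^\eta)$, hence by $q^c$. Feeding these data into the classical Lang--Weil estimate recalled in the introduction, and absorbing the factor $(\deg C - 1)(\deg C - 2)$ and the subleading constant into a single $q^c$, one obtains $\#C(\Ff_{p^t}) \ge p^{dt} - q^c p^{t(d-1/2)}$. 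Since $C(\Ff_{p^t}) \subseteq X^\eta(\Ff_{p^t}, \frob_q)$, this provides the required lower bound, and the upper bound in case~(1) is the same one derived above.

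The main obstacle, in my view, is not the Lang--Weil application itself but the construction of $X_\xi$: one needs the non-smooth locus after twisting reduction to descend to a \emph{difference} subvariety of strictly smaller transformal dimension, uniformly over all sufficiently large $q$ and all $\eta$. This requires reconciling the stratification by twisting reductions (a purely algebro-geometric datum in each $M_q$) with a single difference-algebraic object, and in particular verifying that the passage from a smooth $\Ff_{p^t}$-point of a twist back to absolute irreducibility of the corresponding component of $M_q(X^\eta)$ is uniform in $p, q, t, \eta$; here the non-Archimedean perturbation arguments from \cite{DoHr22} underlying Theorem~A should be the critical ingredient.
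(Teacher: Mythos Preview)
Your proposal is correct and follows essentially the same strategy as the paper's proof (Theorem~\ref{thm-finiteryCount}): combine the equidimensionality of Theorem~A, a stratification via twisting reduction to secure smoothness, the degree bound $\widetilde{\deg}(M_q(X^\eta))\leq q^c$, and the classical Lang--Weil estimate. The one point you leave underspecified is \emph{how} smoothness is certified: the paper does not apply a Jacobian criterion directly on $M_q(X^\eta)\subseteq\Aa^n$, but instead uses a primitive-element argument (Theorem~\ref{thm-pi-twist-uniform}) to map each top-dimensional stratum $X_i$ twist-birationally onto a locally closed $Y_i\subseteq\Aa^{d+1}$, where codimension one means a single difference polynomial $Q_i$ with $\partial Q_i/\partial t_1\neq 0$ on $Y_i$ suffices; the Lang--Weil count is then performed on the image component $\beta(W)\subseteq M_{q'}(Y_i^\eta)$ and transferred back through the homeomorphism.
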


We remark that when $X$ is a pure algebraic variety, the proof of Theorem B gives that the $q^c$ in the count becomes a constant $c$ which does not depend on $q$. Hence, it is a version of the original Lang-Weil estimate where we do not require $X$ to be irreducible or geometrically irreducible. Indeed, as we mentioned before, requiring $X$ to be irreducible as a difference variety does not make $M_q(X)$ irreducible. However, our lower bound will be optimal if $M_q(X)$ happens to be irreducible for almost all $q$.

\begin{question}
    Is there a geometric or model-theoretic condition on the difference variety $X$ such that $M_q(X)$ is irreducible for almost all $q$?
\end{question}

Note that our count in $(\Ff_{p^t},\frob_q)$ is only meaningful when $p^t$ is significantly larger than $q$. Also the count is very rough, in the sense that when $X$ has transformal dimension $d$, and in case $X$ has a point $a$ in $(\Ff_{p^t},\frob_q)$ outside of $X_\xi$, then the estimate we give is a number between $(1-\epsilon)p^{td}$ and $q^cp^{td}$. To determine the precise coefficient of $p^{td}$, we need to count the number of irreducible components that are defined over $\Ff_{p^t}$. This is a highly non-trivial problem to solve, see Remark~\ref{rem: nb irreducible components} for further discussions.  Here we discuss the zero-dimensional case by way of illustration.

 When the difference variety $X$ has transformal dimension 0,
  the twisted Lang-Weil estimate (\cite{HrushovskiFrobenius},~\cite{ShuVar21}) tells us that there are finitely many rational numbers $(\mu_i)_{i\leq N}$ and natural numbers $(c_i)_{i\leq N}$ such that for all $\eta:D\to (\mathbb{F}_{p}^{alg},\frob_q)$,
\[|\#X^\eta(\mathbb{F}_{p}^{alg},\frob_q)-\mu_iq^{c_i}|\leq q^{c_i-1/2}\] holds for some $i\leq N$. Here $c_i$ is the total dimension of $X^\eta$.\footnote{Namely the maximum of transcendence degrees of difference function fields of irreducible components of $X^\eta$.}
But this estimate says nothing about the distribution of the points of $X$ among the various finite fields.  
 For example, consider the size of $\sigma(x)=x$, which is $|\Ff_{p^t}\cap \Ff_q|$, the precise number depends on the divisibility of $t$ and $\log_p q$. More generally, consider the difference polynomial $P(x)$ with leading term $\sigma^n(x^\ell)$. If $(\ell q^n)!| t$, then all solutions of $P(x)=0$ lie in $\Ff_{p^t}$, hence it has $\ell q^n$ solutions (counted with multiplicity). However, if $t$ is a prime, then $\Ff_{p^t}$ can contain at most $p$ solutions.  Additional, perhaps statistical results on this distribution would be very interesting.

Theorem B has an application to the model theory of difference fields. 

Consider $K=\prod_{i\to\mathcal{U}}(\Ff_{p_i^{n_i}},\frob_{p_i^{m_i}})$ an ultraproduct of finite difference fields over a non-principal ultrafilter $\mathcal{U}$ with $\lim_{i\to\infty}n_i/m_i=\infty$ and $\lim_{i\to \infty}p_i^{m_i}=\infty$. We prove that the coarse dimension $\pmb{\delta}$ (a dimension stemming from non-standard counting, see Section~\ref{sec:psdf} for details) of a quantifier-free type equals to the transformal transcendence degree of it, which resolves a conjecture by TZ in \cite{ZouDifference}.
\begin{thmC} Let $A$ be a difference subfield of $K$ and $r(x)=\qftp(a/A)$ where $a$ is a tuple in $K$ with transformal transcendence degree $d$ over $A$. Then $\pmb{\delta}(r)=d$.
\end{thmC}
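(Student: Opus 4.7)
The plan is to compute $\pmb{\delta}(r)$ by applying Theorem~B to a difference variety $V$ containing $a$ as a generic point, and observing that the arithmetic error $q_i^c = p_i^{c m_i}$ is absorbed by the counting unit $p_i^{n_i}$ since $m_i/n_i \to 0$. Recall that $\pmb{\delta}(r) = \inf_{\phi \in r} \mathrm{st}\bigl(\log_{p_i^{n_i}}|\phi(K_i)|\bigr)$, the infimum ranging over quantifier-free formulas in $r$.

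For the upper bound $\pmb{\delta}(r) \leq d$, I would choose a difference variety $V$ of transformal dimension exactly $d$ cut out by finitely many difference polynomials over a finitely generated difference subring $D \subseteq A$, with $a \in V$ a generic point. Such a $V$ exists: starting from the ambient affine space and adjoining equations vanishing on $a$ one at a time, the transformal dimension of the intersection decreases and stabilizes at $d$ (it cannot go below $d$ because $a$ has transformal transcendence degree $d$). The formula $\phi_V(x) \equiv (x \in V)$ lies in $r(x)$. By Theorem~A applied to $V$, there is $V_\xi \subsetneq V$ of transformal dimension $<d$ and a finitely generated $D' \supseteq D$ in $\Frac(D)$ such that Theorem~B applies. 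For $\mathcal{U}$-almost all $i$, with $\eta_i : D' \to K_i$ induced by the embedding $A \hookrightarrow K$,
\[ |\phi_V(K_i)| = \# V^{\eta_i}(K_i) \leq q_i^c\, p_i^{d n_i}, \]
so $\log_{p_i^{n_i}}|\phi_V(K_i)| \leq d + c\, m_i/n_i$, whose standard part is $d$.

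For the lower bound $\pmb{\delta}(r) \geq d$, fix any quantifier-free $\phi \in r$. After putting $\phi$ in disjunctive normal form and replacing it with a disjunct satisfied by $a$, I may assume $\phi(x)$ has the shape $\bigwedge_j P_j(x) = 0 \wedge Q(x) \neq 0$. Since $a$ is generic on $V$, each $P_j$ vanishes on $V$ while $Q$ does not; hence $V \cap V(Q)$ is a proper difference subvariety of $V$ of transformal dimension $<d$, and similarly $a \notin V_\xi$ by genericity. {\L}o\'s' theorem transfers these facts to $\mathcal{U}$-almost all $i$, placing $a_i \in V^{\eta_i}(K_i) \setminus V_\xi^{\eta_i}(K_i)$, so case~(1) of Theorem~B yields the lower bound on $\# V^{\eta_i}(K_i)$, and the Theorem~B upper bound in transformal dimension $<d$ bounds $\# (V \cap V(Q))^{\eta_i}(K_i)$, giving
\[ |\phi(K_i)| \geq \# V^{\eta_i}(K_i) - \# (V \cap V(Q))^{\eta_i}(K_i) \geq p_i^{d n_i} - q_i^c p_i^{n_i(d-1/2)} - q_i^c p_i^{n_i(d-1)}. \]
Since $q_i^c p_i^{-n_i/2} = p_i^{c m_i - n_i/2} \to 0$ by $n_i/m_i \to \infty$, this forces $\log_{p_i^{n_i}}|\phi(K_i)|$ to have standard part $\geq d$.

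The main obstacle is bookkeeping across the ultrafilter: Theorem~A produces $D'$ depending on the chosen $V$, so the same $V$ and the same $D'$ (enlarged to contain the coefficients of $\phi$ in the lower bound) and the same family $(\eta_i)$ must serve both directions. This is a routine application of {\L}o\'s' theorem together with saturation of $K$. The model-theoretic content is that genericity of $a$ on $V$, preserved in the Frobenius reductions as $a_i \notin V_\xi^{\eta_i}$ for $\mathcal{U}$-almost all $i$, is exactly the condition needed to invoke case~(1) of Theorem~B.
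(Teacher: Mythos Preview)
Your overall strategy is sound and close to the paper's, and your upper bound argument is correct. However, the lower bound contains a genuine gap.

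You assert that ``$V \cap V(Q)$ is a proper difference subvariety of $V$ of transformal dimension $<d$''. This would be true for algebraic varieties, but it fails for difference varieties: a proper closed subvariety of an irreducible difference variety can have the same transformal dimension. For instance, take $V = V(\sigma(x)-x) \subseteq \Aa^2$ over $A$, which is irreducible of transformal dimension $1$ (the coordinate $y$ is transformally free, while $x$ is transformally algebraic). Its generic point is $(s,t)$ with $s$ a new element of the fixed field and $t$ transformally transcendental, so $\trfdeg((s,t)/A)=1$. With $Q=x$ we have $V\cap V(Q)=\{0\}\times\Aa^1$, which also has transformal dimension $1$. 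In this situation your subtraction bound collapses: the upper bound you invoke for $|(V\cap V(Q))^{\eta_i}(K_i)|$ is of order $q_i^{c'}p_i^{n_i d}$, which swamps the lower bound $p_i^{dn_i}-q_i^c p_i^{n_i(d-1/2)}$ for $|V^{\eta_i}(K_i)|$, and the difference says nothing.

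The paper circumvents this by never subtracting. Given $\phi$ defining a basic open set $Z=V\setminus V(Q)$, it passes via the Rabinowitsch trick to the \emph{closed} variety $X=\{(v,w): v\in V,\ wQ(v)=1\}$, in quantifier-free definable bijection with $Z$, still of transformal dimension $d$ and with $(a,1/Q(a))$ generic. Then Theorem~B (in the form of Lemma~\ref{lem-deltaCount}) is applied once to $X$: since $\tdim_A(X_\xi)<d$ and $\trfdeg((a,1/Q(a))/A)=d$, one gets $(a,1/Q(a))\notin X_\xi$ --- this uses only the easy direction ``a point of transformal degree $d$ cannot lie on a variety of transformal dimension $<d$'', not the false converse you invoked. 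That immediately yields $\pmb{\delta}(Z(K))=d$. Replacing your subtraction step with this trick repairs the argument.
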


\medskip

\textbf{Organization of the paper:} The paper is structured as follows. In Section~\ref{sec:prelim}, we review the basics of difference algebra, the model theory of difference fields, and valued difference fields. We also prove a useful result about the stability of transformal dimension under ultraproducts.

In Section~\ref{sec:loc_dim}, we work in $\widetilde{\omega VFA}$ and use its non-Archimedean topology to define $V_s$ uniformly. This definition helps us establish the analogue implicit function theorem in $V$ excluding $V_s$. This is crucial for the proof of Theorem A in Section~\ref{sec:equi_dim}.

Section~\ref{sec:twist_red} sets up the twisting reduction and proves a generic smoothness result for difference varieties after twisting reduction. Section~\ref{sec.Count} brings everything together and provides the proof of our main result, Theorem B (see Theorem~\ref{thm-finiteryCount}).

Finally, in Section~\ref{sec:psdf}, we explore the model-theoretic implications of our counting results.  Note that almost all results in this paper are stated in a uniform fashion, which invokes a substantial amount of technicalities to achieve.\\
\noindent\textbf{Acknowledgment:} MH and TZ was partially supported by the German Research Foundation (DFG) via HI 2004/1-1 (part of the French-German ANR-DFG project GeoMod) and under Germany's Excellence Strategy EXC 2044-390685587, `Mathematics M\"unster: Dynamics-Geometry-Structure'. JY was partially supported by the Fondation Sciences Math\'ematiques de Paris. We are grateful to Yuval Dor and Yatir Halevi for their insightful discussions.

\section{Preliminaries}\label{sec:prelim}
\textbf{Convention and Notation}: Throughout the paper, all rings are commutative with $1$ and morphisms are unital. 
For ultraproducts, let $I$ be an index set and $\mathcal{U}$ be an ultrafilter on $I$, and $(M_i)_{i\in I}$ be a family of $\mathcal{L}$-structures. We write the ultraproduct as $\prod_{i\to\mathcal{U}}M_i$, the ultraproduct of $X_i\subseteq M_i^n$ as $\prod_{i\to\mathcal{U}}X_i$ and a tuple from the ultraproduct as $(a_i)_{i\to\mathcal{U}}$. If $(\alpha_i)_{i\in I}$ is a sequence in $\mathbb{R}$, we denote by $\lim_{i\to\mathcal{U}}\alpha_i$ the unique $r\in\mathbb{R}\cup\{\pm\infty\}$ such that for any open interval $A$ containing $r$ (we regard $\infty\in(a,\infty)$, same for $-\infty$) one has $\{i\in I:\alpha_i\in A\}\in\mathcal{U}$.

\subsection{Difference Algebraic Preliminaries}\label{sec:pre1}
In this section, we recall some preliminaries on difference algebra. The main reference for this section is Cohn's book \cite{cohn}. 

Recall that a \emph{difference ring} $(R,\sigma)$ is a ring $R$ with a given injective morphism $\sigma:R\to R$. We occasionally refer to $R$ as a difference ring when $\sigma$ is understood from the context. For a prime power $q$, we use $\frob_q$ to denote the map $R\to R: x\mapsto x^q$.

Given a difference ring $(R,\sigma)$ and $n\in\mathbb{N}_{>0}$, let $R[x_1,\ldots,x_n]_\sigma$ denote the difference ring $R[x_1,\ldots,x_n,x_1^\sigma,\ldots x_n^\sigma,\ldots]$ with the natural (extension of) $\sigma$ on it. Note that we can evaluate difference polynomials just as one evaluates algebraic polynomials. Throughout the paper, `polynomial' means `difference polynomial', and we will call the usual polynomials \emph{algebraic polynomials}.

A difference ring $(K,\sigma)$ is a \emph{difference field} if the underlying ring $K$ is a field.  A difference field $(K,\sigma)$ is called \emph{inversive} if $\sigma$ is an automorphism, and it is called \emph{non-periodic} if $\Chara(K)=p$ and $\sigma^l\circ (\frob_p)^m \neq \mathrm{id}$ for any $(l,m)\neq (0,0)$.  Here, we set $\frob_p=\mathrm{id}$ if $p=0$.

By~\cite[Lemma II, p. 201]{cohn}, $(K,\sigma)$ is non-periodic if and only if there is no non-trivial polynomial that vanishes identically on $K$.

The inversive closure of a difference field $K$, i.e., the uniquely determined smallest inversive difference field containing $K$, will be denoted by $K^{\inv}$.

A \emph{morphism} of difference rings $f:(R_1,\sigma_1)\to (R_2,\sigma_2)$ is a morphism of rings that respects the difference operator. Given a difference field $K$, a \emph{$K$-difference 
 algebra} is a difference ring $R$ with a morphism $K\to R$. Morphisms of $K$-difference algebras are defined similarly. We will call $K$-difference 
 algebras/fields \emph{difference rings/fields over $K$} as well. For a difference field $L$ over $K$, we will denote the setup by $K\leq L$.

 For difference fields $K\leq L$ and $a\in L$ a tuple, let $K(a)_{\sigma}:=K(\sigma^k(a))_{k\in\Nn}$. Note that in case $K$ is inversive, we have $K(a)_\sigma^{\inv}=K(\sigma^i(a))_{i\in \mathbb{Z}}$. If $L=K(a)_\sigma$, then $a$ is a tuple of \emph{generators} of $L$ over $K$, and $L$ is said to be \emph{finitely generated over $K$} if the tuple of generators can be chosen to be finite. Likewise, if $(R,\sigma)\leq (S,\sigma)$ is an extension of difference rings and $a$ is a tuple from $S$, we set $R[a]_\sigma:=R[a,\sigma(a),\sigma^2(a),\ldots]$ and say that $S$ is \emph{finitely generated over $R$} if $S=R[a]_\sigma$ for some  finite $a$.
 
 Let $K\leq L=K(a)_\sigma$ be a finitely generated difference field extension. The \emph{limit degree} $\ld(L/K)_{\sigma}\in\mathbb{N}^{>0}\cup\{\infty\}$ is defined as \[\min\{[K(a,\sigma(a),\ldots,\sigma^{k+1}(a)):K(a,\sigma(a),\ldots,\sigma^{k}(a))]: k\in\mathbb{N}\},\]
and the \emph{reduced limit degree} $\rld(L/K)_\sigma$ is defined as \[\min\{[K(a,\sigma(a),\ldots,\sigma^{k+1}(a)):K(a,\sigma(a),\ldots,\sigma^{k}(a))]_s: k\in\mathbb{N}\},\] where $[E,F]_s$ is the separable degree of $E$ over $F$ for field extensions $F\leq E$.  By convention, the degree and the separable degree of a non-algebraic field extension $F/E$ is $\infty$. Note that $\ld(L/K)$ and $\rld(L/K)$ do not depend on the choice of generators~\cite[p. 135 and p. 140]{cohn}.

Given $(R,\sigma)$ a difference ring, a \emph{difference ideal} $\Sigma$ of $R$ is an ideal of the ring $R$ such that for $a\in \Sigma$, $\sigma(a)\in \Sigma$. A difference ideal $\Sigma$ is called \emph{prime} if it is a prime ideal, it is called \emph{reflexive} if $\sigma(a)\in\Sigma$ implies $a\in\Sigma$, and it is called \emph{perfect} if whenever $\prod_{i\leq N}\sigma^{n_i}(a)\in\Sigma$ for some $N$ and $n_i\geq 0$, then $a\in\Sigma$. In particular, a perfect difference ideal is radical. Note that any reflexive prime difference ideal is perfect.\footnote{A reflexive prime difference ideal is called a \emph{transformally prime ideal} in \cite{HrushovskiFrobenius}.} 

A \emph{difference domain} $R$ is a difference ring that is a domain. 

For a difference domain $(R,\sigma)$, $\sigma$ extends (uniquely) to $F=\Frac(R)$, we call $F=(F,\sigma)$ the \emph{fraction difference field}. Clearly, if $\sigma:R\to R$ is surjective, then $(F,\sigma)$ is inversive.

Let $(K,\sigma)$ be a difference field and $a$ be a tuple of length $n$ in a $K$-difference field $L$. Define $I(a/K)=\{P\in K[x_1,\ldots,x_n]_\sigma: P(a)=0\}$. Note that $I(a/K)$ is a reflexive prime difference ideal. 

Conversely, any proper reflexive prime difference ideal of $K[x_1,\ldots,x_n]_\sigma$ is of the form $I(b/K)$ for some $n$-tuple $b$ in some difference field extension of $K$ (see \cite[Theorem~VIII on p.~77]{cohn}).

Now we define some terminology for difference polynomial rings. We use $\mathbb{N}[\sigma]$ to denote the commutative semiring generated by $\sigma$ over $\mathbb{N}$. We equip it with the natural ordering such that $\sigma>n$ for all $n\in \mathbb{N}$. 

Let $Q\in K[x_1,\ldots,x_n]_{\sigma}$ be given of the form $Q=\sum_{j\leq J}a_j\prod_{1\leq i\leq n}x_i^{\mu_{i,j}}$ with $a_j\in K\setminus\{0\}$ and $\mu_{i,j}\in\Nn[\sigma]$ such that if $j\neq j'$, then $(\mu_{1,j},\ldots,\mu_{n,j})\neq (\mu_{1,j'},\ldots,\mu_{n,j'})$. The \emph{total (difference) degree} of $Q$ is defined to be $\max\{\sum_{1\leq i\leq n}\mu_{i,j}:j\in J\}\in \mathbb{N}[\sigma]$.

Next, we define a natural derivation on difference polynomials. Let $K$ be a difference field. We define the partial derivative $\frac{\partial }{\partial x_i}$ on $K[x_1,\ldots,x_n]_{\sigma}$ for $1\leq i\leq n$ as the unique $K$-linear operator satisfying the Leibniz rule such that
    \begin{itemize}
        \item $\frac{\partial x_i}{\partial x_i}=1$ and $\frac{\partial x_j}{\partial x_i}=0$ for $i\neq j$;
        \item
        $\frac{\partial \left(x_j^{\sigma^k}\right)}{\partial x_i}=0$ for all $k\geq 1$ and $1\leq i,j\leq n$. 

\end{itemize}
Lastly, let $K$ be an inversive difference field of characteristic $p$. Denote by $K[x_1,\ldots,x_n]^{\pm}_{\sigma}$ the difference ring $K[ ((\frob_p)^s(x_i))^{\sigma^z}:z,s\in\mathbb{Z},1\leq i\leq n]$ 
with the obvious identifications, i.e., identify it as a subring of the perfect inversive closure of $\Frac(K[x_1,\ldots,x_n]_{\sigma})$.

The following is the difference version of Noetherianity.
\begin{definition}
     A difference ring is \emph{Ritt} if it satisfies the ascending chain condition for perfect difference ideals, i.e., for any sequence $\Sigma_1\subseteq \Sigma_2\subseteq\cdots$ of perfect difference ideals, there is some $k$ such that $\Sigma_n=\Sigma_k$ for all $n\geq k$. 
\end{definition}
The following result is the difference analogue of Hilbert's Basis Theorem (see \cite[Theorem~V, p.~89]{cohn} for a proof).
 \begin{fact}[\mbox{Ritt-Raudenbush Basis Theorem}]\label{RittRaud}
        A polynomial difference ring $R[x_1,\ldots,x_n]_\sigma$ over a Ritt difference ring $R$ is a Ritt difference ring.
    \end{fact}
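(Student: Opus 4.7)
The plan is to reduce to showing that if $S$ is any Ritt difference ring, then so is $S[x]_\sigma$; iterating this with $S = R[x_1,\ldots,x_{n-1}]_\sigma$ and using induction on $n$ (base case $n=0$ being the hypothesis on $R$) then gives the general statement.

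For the one-variable case, I would mimic the structure of Hilbert's basis theorem but work with perfect closures instead of ideals. Suppose toward a contradiction that $\Sigma_1 \subsetneq \Sigma_2 \subsetneq \cdots$ is a strictly ascending chain of perfect difference ideals in $S[x]_\sigma$, and set $\Sigma = \bigcup_i \Sigma_i$, which is itself a perfect difference ideal. Equip non-zero elements of $S[x]_\sigma$ with a rank: the \emph{order} of $f$ is the largest $k$ with $\sigma^k(x)$ appearing, and the secondary rank is its degree in $\sigma^k(x)$; the pair $(\text{order},\text{degree})$ well-orders $S[x]_\sigma \setminus \{0\}$ lexicographically. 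Given $f,g$ with $g$ of rank $(k,d)$, one may perform a pseudo-reduction: apply some $\sigma^j$ to $g$ to match the order of a leading monomial of $f$, then use a power of the $S$-valued leading coefficient of $\sigma^j(g)$ to clear the offending monomial in $f$, lowering the rank.

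Using this, I would choose inductively $f_i \in \Sigma_i \setminus \Sigma_{i-1}$ of minimal rank. Dickson's lemma applied to the sequence of rank pairs $(k_i,d_i)$ supplies an infinite non-decreasing subsequence, allowing one to pseudo-reduce all later $f_j$'s against earlier ones. The leading coefficients $c_i \in S$ of the reduced remainders generate a perfect difference ideal of $S$, which by the Ritt hypothesis on $S$ is the perfect closure of finitely many of the $c_i$; combining this with Dickson's lemma on the ranks, the reduction procedure terminates, exhibiting $\Sigma$ as the perfect closure of finitely many elements drawn from some $\Sigma_N$. Hence $\Sigma = \Sigma_N$, contradicting the strict ascent.

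The principal obstacle, and the point where the argument diverges most from the classical Hilbert proof, is ensuring that the pseudo-reduction operates cleanly inside the \emph{perfect} closure. In ordinary commutative algebra one multiplies by a leading coefficient and subtracts; here, to stay inside the perfect ideal generated by $g$, one must track not only powers of the leading coefficient of $g$ but also the auxiliary factors introduced by the $\sigma^j$-shifts, which enter the perfect closure only via products of the form $\prod_i \sigma^{n_i}(\cdot)$. Verifying that these extra $\sigma$-twisted factors are absorbed by the perfect closure, and that the resulting termination is compatible with the two-coordinate Dickson argument, is the real technical heart of the theorem.
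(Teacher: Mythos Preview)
The paper does not supply a proof of this statement: it is recorded as a Fact with a reference to Cohn's book \cite[Theorem~V, p.~89]{cohn}, so there is nothing in the paper itself against which to compare your argument.

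For what it is worth, your outline is in the right spirit --- the reduction to a single variable and the use of an (order, degree)-rank together with a pseudo-reduction procedure are the standard ingredients. The proof in Cohn, however, is organised differently: rather than imitating the Artin-style Hilbert basis argument directly on a chain of perfect ideals, one works with \emph{characteristic sets} (autoreduced sets of minimal rank) and shows that every perfect difference ideal is the perfect closure of a finite set by analysing how such a characteristic set controls the ideal. The obstacle you yourself flag --- that pseudo-reduction does not sit cleanly inside the perfect closure --- is exactly why the direct Artin-style route is awkward here and why the characteristic-set machinery is brought in. Your Dickson/leading-coefficient termination step, as written, is too rough to assess for correctness; in particular, passing from ``the leading coefficients lie in the perfect closure of finitely many $c_i$'' to ``$\Sigma$ is the perfect closure of finitely many elements from some $\Sigma_N$'' would need substantially more justification, since membership in a perfect difference ideal is not governed by leading coefficients alone.
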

    \begin{remark}\label{Rm-Z-Noetherian}
        If $R$ is a Noetherian ring with any injective endomorphism $\sigma$ on $R$, then $(R,\sigma)$ is a Ritt difference ring. Hence, $R[x_1,\ldots,x_n]_\sigma$ is Ritt. In particular, if $K$ is a difference field, then the difference polynomial ring $K[x_1,\ldots,x_n]_\sigma$ is a Ritt difference ring.
    \end{remark}

    \begin{definition} 
    Let $K=(K,\sigma)$ be a difference field, for each $n\in\mathbb{N}$, we denote the \emph{difference affine $n$-space} $\Aa^n_{(K,\sigma)}$ to be the topological space whose points consist of reflexive prime difference ideals of the difference polynomial ring $K[x_1,\ldots,x_n]_\sigma$. It is equipped with the \emph{Zariski-Cohn topology} whose
    closed sets are of the form
    \[V(S):=\{p\in\Aa^n_{(K,\sigma)},S\subseteq p\},\]
    where $S\subseteq K[x_1,\ldots,x_n]_\sigma$ is any subset.
    
    Note that $K^n$ can be identified with 
 a subset of $\Aa^n_{(K,\sigma)}$ via $a\mapsto I(a/K)$. We endow $K^n$ with the induced topology, which will also be called the Zariski-Cohn topology (on $K^n$).
    \end{definition}
    \begin{remark}

        Given any $S\subseteq K[x_1,\ldots,x_n]_\sigma$, let $\Sigma_{S}$ be the perfect difference ideal generated by $S$, i.e., the intersection of all perfect difference ideals containing $S$. Then $V(S)=V(\Sigma_S)$. On the other hand, if $\Sigma$ is a perfect difference ideal, by~\cite[p.~88]{cohn}, \[\Sigma=\bigcap_{p\in\Aa^n_{(K,\sigma)}, \Sigma\subseteq p}p.\] Hence $I(V(S)):=\bigcap_{p\in V(S)}p=\Sigma_S$. In other words, there is an order-reversing one-to-one correspondence between closed subsets of $\Aa^n_{(K,\sigma)}$ and perfect difference ideals in $K[x_1,\ldots,x_n]_\sigma$.  Since $K[x_1,\ldots,x_n]_\sigma$ is Ritt by Remark~\ref{Rm-Z-Noetherian}, $\Aa^n_{(K,\sigma)}$ is a Noetherian topological space.
  
\end{remark}
    \begin{definition}
      \begin{itemize}  
      \item Let $K=(K,\sigma)$ be a difference field. An \emph{(affine)\footnote{In this paper, we only consider affine difference varieties.} difference variety} $W$ over $K$ is a closed subset of $\Aa^n_{(K,\sigma)}$ for some $n$. 
        We will often write $W\subseteq\Aa^n$ when $K$ is understood from the context. 
       \item  The Zariski-Cohn topologies on $W$ and $W(K)$ are the subspace topologies induced by $\Aa^n_{(K,\sigma)}$.
     We say $U\subseteq \Aa^n_{(K,\sigma)}$ is a \emph{locally closed subvariety} of $\Aa^n_{(K,\sigma)}$ if $U$ is a locally closed subset of $\Aa^n_{(K,\sigma)}$. Similarly, $U$ is an \emph{open subvariety} of $V$ if $U$ is open in $V$.  A difference variety $V$ is called \emph{irreducible} (over $K$) if it is irreducible as a topological space.
        
\item  For any difference variety $W$, $I(W)$ is finitely generated as a perfect difference ideal in the sense that $I(W)=\Sigma_S$ for some finite set $S$ which we call a set of \emph{defining polynomials} for $W$.
Conversely, given a finite set of polynomials $\{P_1,\ldots,P_k\}\subseteq K[x_1,\ldots,x_n]_\sigma$, we say that $W:=V(\{P_1,\ldots,P_k\})$ is the difference variety defined by $P_1,\ldots,P_k$.

         \end{itemize}
    \end{definition}
\begin{definition}\label{Def:overD}
     Let $D=(D,\sigma)$ be a difference domain, with fraction field $K$. A difference variety $V\subseteq \Aa^n$ over $K$ is said to be \emph{defined over $D$}, if $\Sigma:=I(V)\cap D[x_1,\cdots,x_n]_\sigma$ is finitely generated as a perfect difference ideal in $D[x_1,\cdots,x_n]_\sigma$. And in this case, we say that $V$ is defined over $D$ with defining polynomials $P_1,\ldots,P_k$ if $\{P_1,\ldots,P_k\}$ is a set of generators of $\Sigma$ in $D[x_1,\cdots,x_n]_\sigma$.
\end{definition}

\begin{remark}
     If $D$ is finitely generated over $\mathbb{Z}$, then $D[x_1,\ldots,x_n]_\sigma$ is a Ritt difference ring.  For any difference variety $V$ defined over the fraction field of $D$, the ideal $I(V)\cap D[x_1,\cdots,x_n]_\sigma$ is thus finitely generated as a perfect difference ideal, and so there is a finite set of generators $\{P_1,\ldots, P_k\}\subseteq D[x_1,\ldots,x_n]_\sigma$ such that $V$ is defined over $D$ by $P_1,\ldots,P_k$.

     Note, however, that it is not the case that every set of generators for $I(V)$ from $D[x_1,\ldots,x_n]_\sigma$ is a set of generators for $I(V)\cap D[x_1,\ldots,x_n]_\sigma$.
\end{remark}

\begin{fact}[{\cite[Theorem I,II, p. 111]{cohn}}]
       A difference variety $W\subseteq \Aa^n_{(K,\sigma)}$ is irreducible if and only if $I(W)$ is a reflexive prime difference ideal.
\end{fact}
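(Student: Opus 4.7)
The plan is to exploit the order-reversing bijection between closed subsets of $\Aa^n_{(K,\sigma)}$ and perfect difference ideals of $K[x_1,\ldots,x_n]_\sigma$ recorded in the remark preceding the definition of difference variety. Recall in particular that $I(W)=\bigcap_{p\in W}p$ and that each point $p\in W$ is by definition a reflexive prime difference ideal. This will let me transfer both directions of the classical commutative-algebra argument (an affine variety is irreducible iff its ideal is prime) to the present difference setting, essentially unchanged.

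For the forward direction, assume $W$ is irreducible. Reflexivity of $I(W)$ is immediate from the formula $I(W)=\bigcap_{p\in W}p$: if $\sigma(f)\in I(W)$ then $\sigma(f)\in p$ for each $p\in W$, and reflexivity of each point $p$ gives $f\in p$, hence $f\in I(W)$. For primality, suppose $fg\in I(W)$. Then for every $p\in W$, since $p$ is in particular a prime ideal of the ambient ring, either $f\in p$ or $g\in p$, so $W\subseteq V(f)\cup V(g)$. This writes $W=(W\cap V(f))\cup (W\cap V(g))$ as a union of two closed subvarieties. By irreducibility, one of these equals $W$, say $W=W\cap V(f)$, whence $f$ vanishes on every $p\in W$ and thus $f\in I(W)$.

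For the converse, assume $I(W)$ is a reflexive prime difference ideal and, toward a contradiction, that $W=W_1\cup W_2$ with both $W_i$ proper closed subvarieties of $W$. Under the order-reversing bijection one has $I(W_i)\supsetneq I(W)$ for $i=1,2$, so one may pick $f_i\in I(W_i)\setminus I(W)$. The product $f_1 f_2$ then vanishes on $W_1\cup W_2 = W$, i.e.\ lies in $p$ for every $p\in W$, so $f_1f_2\in\bigcap_{p\in W}p=I(W)$. Primality of $I(W)$ now forces $f_1\in I(W)$ or $f_2\in I(W)$, contradicting the choice of the $f_i$.

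There is no real obstacle: both implications are direct once one uses the point-set description $I(W)=\bigcap_{p\in W}p$ together with the definition of the Zariski–Cohn closed sets $V(S)$. The only mild subtlety worth flagging is that the word \emph{reflexive} in the statement is, in a sense, redundant: any perfect difference ideal (and in particular any $I(W)$) is automatically reflexive, since $\sigma(a)\in\Sigma$ gives $a\in\Sigma$ from the defining property of perfectness. Thus the content of the statement is really the equivalence between irreducibility of $W$ and primality of $I(W)$, with reflexivity supplied for free by the correspondence.
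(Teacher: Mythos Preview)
Your argument is correct. Note, however, that the paper does not actually prove this statement: it is recorded as a Fact with a reference to Cohn's book \cite[Theorem I, II, p.~111]{cohn}, so there is no proof in the paper to compare against. Your proof is the standard topological argument transported along the order-reversing bijection between Zariski--Cohn closed sets and perfect difference ideals, and it goes through without issue; your observation that reflexivity of $I(W)$ is automatic (since $I(W)$ is always perfect) is also correct.
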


\begin{definition}
 If $W\subseteq \Aa^n_{(K,\sigma)}$ is an irreducible difference variety, we define its \emph{function field} $K(W)_\sigma$ to be the fraction difference field of $K[x_1,\ldots,x_n]_\sigma/I(W)$.    
\end{definition}

\begin{definition}
    Let $K\leq L$ be a difference field extension. A family $\{a_i\}_{i\in I}$ of elements from $L$ is called \emph{transformally dependent over $K$} if there are pairwise distinct $i_1,\ldots, i_n$ from $I$ and a nonzero polynomial $P\in K[x_1,\ldots,x_n]_\sigma$ such that $P(a_{i_1},\ldots, a_{i_n})=0$. Otherwise, $\{a_i\}_{i\in I}$ is called \emph{transformally independent} over $K$. An element $a\in L$ is said to be \emph{transformally algebraic} over $K$ if $\{a\}$ is transformally dependent over $K$, and \emph{transformally transcendental} over $K$ otherwise. 

    A transformally independent set $S\subseteq L$ over $K$ is called a \emph{transformal transcendence basis} of $L$ over $K$, if $S$ is a maximal transformally independent set in $L$ over $K$.

    The \emph{transformal transcendence degree} of $L$ over $K$, denoted by $\trfdeg(L/K)$, is the cardinality of a transformal transcendence basis of $L$ over $K$.\footnote{Transformal algebraic closure satisfies Steinitz' Exchange Lemma, so it defines a pregeometry in any difference field and the notion of transformal transcendence degree is well-defined.}
\end{definition}

\begin{definition}
Let $W\subseteq \Aa^n$ be an irreducible difference variety over $K$, and let 
 $K\leq L$ be a difference field extension with $a=(a_1,\ldots,a_n)$ a tuple in $L$. Then $a$ is called a \emph{generic point} of $W$ over $K$ if $I(a/K)=I(W)$. In this case, $K(a)_{\sigma}$ is naturally isomorphic to the function field $K(W)_{\sigma}$.

For an irreducible difference variety $V$ over $K$, its \emph{transformal dimension} $\tdim_K(V)$ is defined to be $\trfdeg(K(V)_\sigma/K)$. If $V$ is a difference variety (over $K$), its transformal dimension  $\tdim_K(V)$ is defined to be the maximal transformal dimension of the irreducible components of $V$ (over $K$). Let $V$ be defined over some difference domain $D$. Then we set $\tdim_D(V):=\tdim_K(V)$ where $K=\Frac(D)$.
\end{definition}

To avoid any confusion, the dimension of an algebraic variety $X$ will be called its \emph{algebraic dimension}, denoted by $\adim(X)$.

\begin{definition}
  Let $W\subseteq \Aa^n$ be a difference variety over $K$ with defining polynomials $P_1,\ldots,P_k$. Let $L\geq K$. The \emph{set of $L$-rational points of $W$}, denoted by $W(L)$, is defined as \[W(L):=\left\{a\in L^n, \bigwedge_{1\leq i\leq k}P_i(a)=0\right\}.\]

  An embedding $f:L\to L'$ of difference fields over $K$ induces a natural map $W(f):W(L)\to W(L')$.

\end{definition}

Note that $W(L)$ does not depend on the set of defining polynomials.

\begin{definition}
    Let $E\geq F$ be a difference field extension.  The \emph{core  of $E$ over $F$}, denoted by $\mbox{Core}(E/F)$, is  the difference subfield which consists of all elements $a\in E$ which are separable algebraic over $F$ and satisfy $\ld(F(a)_\sigma/F)=1$.
\end{definition}

The following fact is a consequence of \cite[Lemma~2.8 and Lemma~2.9]{ChHr99}.\footnote{See also \cite[page 5]{zoeNotes}.}

\begin{fact}[\mbox{Babbitt's Theorem}]\label{fact:babbit}
Let $(E,\sigma)$ be a difference field. Then two extensions of $\sigma$ to $E^{\text{alg}}$ are $E$-isomorphic if and only if their restrictions to $\mbox{Core}(E^{\text{alg}}/E)$ are isomorphic.
\end{fact}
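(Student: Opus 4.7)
My plan proceeds as follows. The forward direction is formal: any $E$-isomorphism $\varphi\colon (E^{\mathrm{alg}},\sigma_1)\to (E^{\mathrm{alg}},\sigma_2)$ of difference fields preserves separability over $E$ as well as the limit degree of finitely generated difference subextensions over $E$. Consequently $\varphi$ sends the core computed with respect to $\sigma_1$ onto the core computed with respect to $\sigma_2$, and its restriction is the desired $E$-isomorphism between the two cores.

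For the converse, I would begin with a difference-field isomorphism $\tau_0\colon (C_1,\sigma_1|_{C_1})\xrightarrow{\sim} (C_2,\sigma_2|_{C_2})$ over $E$, where $C_i:=\mathrm{Core}(E^{\mathrm{alg}}/E)$ is taken with respect to $\sigma_i$. Since $E^{\mathrm{alg}}$ is the algebraic closure of $E$ and contains both $C_i$'s, $\tau_0$ extends to a field automorphism $\tau$ of $E^{\mathrm{alg}}$ over $E$. Replacing $\sigma_2$ by the conjugate $\tau^{-1}\sigma_2\tau$ reduces the problem to the case $C_1=C_2=:E'$ and $\sigma_1|_{E'}=\sigma_2|_{E'}$. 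The remaining task is to construct an $E'$-automorphism $\varphi\in\mathrm{Gal}(E^{\mathrm{alg}}/E')$ intertwining $\sigma_1$ and $\sigma_2$, i.e., $\varphi\sigma_1=\sigma_2\varphi$.

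To build $\varphi$, I would work through the tower of finite Galois subextensions $L/E$ inside $E^{\mathrm{alg}}$. For each such $L$, the embeddings $\sigma_1|_L,\sigma_2|_L\colon L\to E^{\mathrm{alg}}$ have the same image and agree on $L\cap E'$, so they differ by some $g_L\in\mathrm{Gal}(\sigma_1(L)/\sigma_1(L\cap E'))$. The key structural input is the characterization of $E'$ by $\ld=1$: every finite Galois subextension of $E^{\mathrm{alg}}/E'$ has strictly greater limit degree, so that the Galois twists available at each finite level are rich enough to prescribe the behavior of an extension of $\sigma$ freely above $E'$. A projective-limit (K\"onig's lemma) argument on the finite sets of admissible $g_L$'s then produces the required $\varphi$.

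The principal obstacle is the coherence step: proving that the $g_L$'s can be chosen simultaneously to be compatible under restriction \emph{and} to satisfy the intertwining relation $\varphi\sigma_1=\sigma_2\varphi$ at every level. This is exactly the role played by the Babbitt-type structure lemmas on the propagation of the difference structure in a Galois tower above its core, extracted here from Lemmas~2.8 and~2.9 of \cite{ChHr99}.
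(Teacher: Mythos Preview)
The paper does not give a proof of this statement: it is recorded as a \emph{Fact}, attributed to \cite[Lemma~2.8 and Lemma~2.9]{ChHr99} (with a pointer also to \cite{zoeNotes}). There is therefore no argument in the paper to compare your proposal against.

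Your outline is a reasonable sketch of the standard reduction: the forward direction is indeed formal, and your reduction in the converse direction (extending the core isomorphism to a field automorphism of $E^{\mathrm{alg}}$ and conjugating so that the two extensions agree on a common core $E'$) is the natural first move. You are also right that the substantive content lies entirely in the step you label the ``principal obstacle'': producing a coherent system of $g_L$'s across the tower of finite Galois subextensions. Your proposal does not actually carry this out; it defers it to the same Lemmas~2.8 and~2.9 of \cite{ChHr99} that the paper cites. In that sense your write-up and the paper's treatment coincide: both point to \cite{ChHr99} for the real work.

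One small caution on your outline: the sentence ``every finite Galois subextension of $E^{\mathrm{alg}}/E'$ has strictly greater limit degree'' is not the right way to phrase the structural input, and as stated it is not what the cited lemmas provide. The content of Babbitt's decomposition (and of the Chatzidakis--Hrushovski lemmas) is rather that, above the core, any extension of $\sigma$ can be adjusted by an element of the relevant Galois group so as to match any other, and this is organised via a filtration by \emph{benign} extensions. If you want to turn your sketch into a proof, that is the mechanism you would need to invoke, not a bare inequality on limit degrees.
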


\bigskip

\subsection{ACFA}
In this section, we will summarize some results about the model companion of the theory of difference fields, ACFA. The main reference is \cite{ChHr99} by Chatzidakis and EH. Unless stated otherwise, in this section all difference fields are assumed to be inversive.

Let $\mathcal{L}_\sigma$ be the language of difference rings, namely the language of rings augmented by one function symbol $\sigma$.

If $q$ is a power of some prime $p$,  we denote by $K_q$ the difference field $(\mathbb{F}_p^{alg},\frob_q)$. Moreover, we denote by $Q$ the set of all prime powers.

For items (1-5) of the following fact, see \cite[1.1--1.7 and  Corollary 2.11]{ChHr99}. Item (6) is \cite[Theorem 1.4]{HrushovskiFrobenius}, which depends on the twisted Lang-Weil estimate in \cite{HrushovskiFrobenius}. For a recent purely geometric proof of this estimate, see \cite{ShuVar21}.

\begin{fact}\label{fact:ACFA}
    \begin{enumerate}
        \item The theory of difference fields in $\mathcal{L}_\sigma$ admits a model companion, called ACFA.
      \item If $(E,\sigma)\models ACFA$ and $B\subseteq E$ then the algebraic closure of $B$, denoted by $\acl_\sigma(B)$, is given by the field-theoretic algebraic closure of the field generated by $\bigcup_{z\in\Zz}\sigma^z(B)$.
\item If $(E_i,\sigma_i)\models ACFA$ and $B_i\subseteq E_i$ for $i=1,2$, then $B_1\equiv_{\mathcal{L}_\sigma} B_2$ if and only if there is an $\mathcal{L}_\sigma$-isomorphism from $\acl_\sigma(B_1)$ to $\acl_\sigma(B_2)$ sending $B_1$ to $B_2$.

Equivalently, if $K=(K,\sigma)$ is a difference field with $K$ algebraically closed and $K\subseteq E_i\models ACFA$ for $i=1,2$, then $E_1\equiv_{K}E_2$.
        \item Every $\mathcal{L}_\sigma$-formula $\phi(x)$ in a tuple $x$ is equivalent in ACFA to a formula of the form $\exists y\psi(x,y)$, such that $\psi(x,y)$ is quantifier-free, $|y|=1$ and there is $m\in\mathbb{N}$ such that $\psi(a,b)$ implies that $b$ is algebraic (in the field-theoretic sense) over the field $A$ generated by $a,\sigma(a),\ldots,\sigma^m(a)$. Moreover, the degree of $b$ over $A$ is bounded uniformly.
        \item
        ACFA has a canonical complete definable 1-type $p_{gen}(x)$, determined by the formulas $P(x)\neq 0$ for all nonzero polynomials. More generally, for each $n$, there is a unique generic $n$-type, $p_{gen}^n(x_1,\ldots,x_n)$ generated by $P(x_1,\ldots,x_n)\neq 0$ for all nonzero  polynomials. Equivalently, for any difference subfield $K$ in a monster model of ACFA, $p_{gen}^n|_K=\otimes^n p_{gen}|_K=\tp(a_1,\ldots,a_n/K)$ where $a_i\models p_{gen}|_{Ka_1,\ldots,a_{i-1}}$ for $1\leq i\leq n$.
        \item
        ACFA is the asymptotic theory of $K_q$. Namely, 
        $$\mbox{ACFA}=\{\phi \text{ an } \mathcal{L}_\sigma\text{-sentence }: K_q\models \phi\text{ for all }q\in Q\text{ with }q\gg 0.\}$$ In particular, if $\mathcal{U}$ is a non-principal ultrafilter on $Q$, then $\prod_{q\to \mathcal{U}}K_q\models ACFA$.
    \end{enumerate} 
\end{fact}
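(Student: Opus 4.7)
My plan is to axiomatize ACFA explicitly and then derive items (1)--(5) by standard model-theoretic arguments, reserving the deepest work for the asymptotic statement (6), which hinges on the twisted Lang--Weil estimate.

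The first step is an amalgamation result for difference fields: for an inversive difference field $(K, \sigma)$ and two extensions $L_1, L_2$ that are linearly disjoint over the algebraic closure of $K$ in each, the tensor product $L_1 \otimes_K L_2$ is a domain and $\sigma$ extends compatibly to its fraction field. With amalgamation in hand, ACFA can be axiomatized by requiring $(K, \sigma)$ to be an algebraically closed field with an automorphism, together with the geometric scheme: for every absolutely irreducible variety $V$ and every absolutely irreducible subvariety $W \subseteq V \times V^\sigma$ projecting dominantly onto both factors, there is $a \in V(K)$ with $(a, \sigma(a)) \in W(K)$. Existence of the model companion and model completeness then follow from amalgamation plus the axiom scheme, yielding (1).

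For (2)--(4), a back-and-forth over an algebraically closed difference subfield, driven by repeated applications of the geometric axiom, shows that the $\mathcal{L}_\sigma$-type of a tuple $b$ over such $K$ is determined by the $K$-isomorphism type of $\mathrm{acl}_\sigma(Kb)$; this gives (3), and (2) reads off the structure of the algebraic closure directly from the axioms. Item (4) is quantifier elimination up to algebraicity: one absorbs all but one existential quantifier into the field-theoretic algebraic closure of the $\sigma$-translates, and the remaining witness has bounded degree by a uniformity/compactness argument. Item (5) is immediate from (3), since any two realizations of the scheme $P \neq 0$ (for all nonzero polynomials $P$) generate $K$-isomorphic transformally transcendental extensions of the same transformal transcendence degree.

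The main obstacle is (6). The plan is to verify every axiom of ACFA in $K_q = (\Ff_p^{alg}, \frob_q)$ for $q \gg 0$; only the geometric axiom is nontrivial. Given $(V, W)$ as above, one must find $a \in V(K_q)$ with $(a, a^q) \in W(K_q)$, equivalently a point of $W \cap \Gamma_{V,q}$, where $\Gamma_{V,q} \subseteq V \times V^\sigma$ is the graph of the $q$-Frobenius on $V$. After standard geometric preparation (suitable smooth open subvarieties, transversality, and Bertini-type reductions), the twisted Lang--Weil estimate yields $|W \cap \Gamma_{V,q}| = (1+o(1)) q^{\dim W}$ as $q \to \infty$, and in particular the intersection is nonempty for $q \gg 0$. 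The crucial difficulty here is the estimate itself: Hrushovski's original proof uses the $\ell$-adic cohomology of Frobenius twists together with the Weil conjectures, while the more recent argument of Shuddhodan--Varshavsky offers a more direct geometric route.
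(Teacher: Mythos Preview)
The paper does not prove this statement: it is recorded as a Fact, with items (1)--(5) attributed to \cite{ChHr99} (Sections 1.1--1.7 and Corollary~2.11) and item (6) to \cite{HrushovskiFrobenius}, with \cite{ShuVar21} cited as an alternative route to the underlying twisted Lang--Weil estimate. There is therefore no in-paper argument to compare your proposal against.

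That said, your sketch is a faithful outline of the proofs in those references. The axiomatisation of ACFA via the geometric axiom scheme, amalgamation over algebraically closed inversive difference subfields, and the back-and-forth yielding (2)--(4) are exactly the Chatzidakis--Hrushovski approach; your derivation of (5) from (3) is correct, since any two tuples avoiding all nonzero difference polynomials generate $K$-isomorphic purely transformally transcendental extensions. For (6), reducing to the geometric axiom and invoking the twisted Lang--Weil estimate for the intersection $W\cap\Gamma_{V,q}$ is precisely the strategy of \cite{HrushovskiFrobenius}, and you correctly identify this estimate as the deep input, with the two available proofs (cohomological via Deligne, and the geometric argument of Shuddhodan--Varshavsky) noted appropriately.
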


Let $E\models\mbox{ACFA}$ and $\monster\succcurlyeq E$ be $|E|^+$-saturated. Then for any $\mathcal{L}_\sigma(E)$-formula $\phi(x_1,\ldots,x_n)$ and corresponding definable set $X=\phi(E)\subseteq E^n$, we set 
$$\dim_{\mathrm{ACFA}}(X):=\max\{\trfdeg(E(a)_\sigma/E): a\in\phi(\monster)\}.$$

\begin{remark}
    Let $X\subseteq E^n$ be definable in $E\models\mathrm{ACFA}$, then $\dim_{\mathrm{ACFA}}(X)$ is the maximal $d$ such that there is a projection $\pi:E^n\to E^d$ with $\pi(X)$ containing $p^{d}_{gen}$. It follows that $\dim_{\mathrm{ACFA}}$ is uniformly definable across all completions of $\mathrm{ACFA}$.
\end{remark}

The following is immediate from Fact~\ref{fact:ACFA}(1).
\begin{fact}\label{lem:ACFA_comp_dim}
    Let $V$ be a difference variety defined over $K$. Then the transformal dimension of $V$ over $K$ is \[\max\{\dim_{\mathrm{ACFA}}(V(E)): K\leq E\models\mbox{ACFA}\}.\]
\end{fact}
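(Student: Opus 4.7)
The plan is to prove the equality by establishing both inequalities separately.

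For the easy direction $\dim_{\mathrm{ACFA}}(V(E)) \leq \tdim_K(V)$, fix $E \models \mathrm{ACFA}$ with $K \leq E$, let $\monster \succcurlyeq E$ be sufficiently saturated, and pick $a \in V(\monster)$. Since $a$ satisfies every defining polynomial of $V$, we have $I(a/K) \supseteq I(V) = \bigcap_i I(W_i)$ over the irreducible components $W_i$ of $V$. As $I(a/K)$ is a reflexive prime difference ideal, some $I(W) \subseteq I(a/K)$, and the resulting difference ring surjection $K[x]_\sigma/I(W) \twoheadrightarrow K[a]_\sigma$ lets us lift any transformally independent tuple in $K[a]_\sigma$ to a transformally independent tuple in $K[x]_\sigma/I(W)$, giving $\trfdeg(K(a)_\sigma/K) \leq \tdim_K(W) \leq \tdim_K(V)$. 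Finally, any sub-tuple of $a$ that is transformally independent over $E$ is a fortiori transformally independent over $K$, so $\trfdeg(E(a)_\sigma/E) \leq \trfdeg(K(a)_\sigma/K)$, completing the upper bound.

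For the reverse inequality, choose an irreducible component $W$ of $V$ with $\tdim_K(W) = d := \tdim_K(V)$, and fix a transformal transcendence basis $s_1, \ldots, s_d$ of $K(W)_\sigma / K$. Given any $E \models \mathrm{ACFA}$ containing $K$ and a sufficiently saturated $\monster \succcurlyeq E$, use saturation of $\monster$ to select $t_1, \ldots, t_d \in \monster$ that are transformally independent over $E$ (hence also over $K$). The assignment $s_i \mapsto t_i$ extends to a difference field isomorphism $K(s_1, \ldots, s_d)_\sigma \xrightarrow{\sim} K(t_1, \ldots, t_d)_\sigma \subseteq \monster$, under which $K(W)_\sigma$ is identified with a finitely generated, transformally algebraic difference field extension $L$ of $F := K(t_1, \ldots, t_d)_\sigma$. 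Extending the inclusion $F \hookrightarrow \monster$ to an embedding $L \hookrightarrow \monster$ and calling $\bar{a} \in \monster$ the image of the generic tuple of $W$, we obtain $\bar{a} \in V(\monster)$ with $t_1, \ldots, t_d \in K(\bar{a})_\sigma \subseteq E(\bar{a})_\sigma$, so $\trfdeg(E(\bar{a})_\sigma/E) \geq d$, as desired.

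The main technical point is the extension of the embedding $F \hookrightarrow \monster$ to $L \hookrightarrow \monster$. Picking a finite generating tuple $\bar{a}$ of $L$ over $F$, its quantifier-free type $\qftp(\bar{a}/F)$ is consistent (witnessed in $L$), so by Fact~\ref{fact:ACFA}(3) there is a complete $\mathrm{ACFA}$-type over $F$ extending it, determined by any $\mathcal{L}_\sigma$-isomorphism class of $F(\bar{a})_\sigma^{\mathrm{alg}}$; saturation of $\monster$ then realizes this type, yielding the desired embedding. I expect this to be the only subtle step, since both inequalities are otherwise standard calculations with transformal transcendence degree and the matroid it defines.
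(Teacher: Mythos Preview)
Your upper bound argument is fine. The lower bound, however, has a genuine gap: as written, it would establish $\dim_{\mathrm{ACFA}}(V(E))\geq d$ for \emph{every} $E\models\mathrm{ACFA}$ containing $K$, and this is false. Take $K=\Ff_2$ and $V\subseteq\Aa^2$ defined by $\sigma(x_1)=x_1$ and $x_1^2+x_1+1=0$ (this is essentially Example~\ref{eg:dim-fail}). Then $\tdim_K(V)=1$, but if $E$ is a non-principal ultraproduct of the $K_{2^{2n+1}}$ one has $\sigma(\zeta_3)=\zeta_3^2$ in $E$, so $V(E)=\emptyset$ and $\dim_{\mathrm{ACFA}}(V(E))=-\infty$. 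The corollary to Theorem~\ref{thm-main1} in the paper is devoted precisely to analysing when this drop occurs.

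The failure is in your extension step. You assert that $\qftp(\bar a/F)$ extends, via Fact~\ref{fact:ACFA}(3), to a complete ACFA-type over $F$ which saturation of $\monster$ then realizes. But Fact~\ref{fact:ACFA}(3) only says that such a type exists in \emph{some} completion of ACFA; there is no reason it is consistent with $\mathrm{Th}(\monster)$. In the example above, the formula $\sigma(x_1)=x_1\wedge x_1^2+x_1+1=0$ is already inconsistent with $\mathrm{Th}(\monster)$, so no extension of $F\hookrightarrow\monster$ to $L\hookrightarrow\monster$ exists. Difference fields do not amalgamate over arbitrary bases, and your argument implicitly assumes they do.

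The fix is to choose $E$ correctly rather than take it arbitrary. By Fact~\ref{fact:ACFA}(1), embed $L=K(W)_\sigma$ into some $E\models\mathrm{ACFA}$, so the generic point $\bar a$ lies in $E$ and $\pi(\bar a)=(a_1,\ldots,a_d)\models p^d_{gen}|_K$ inside $E$. Now pass to $\monster\succcurlyeq E$ saturated and pick $\bar t\models p^d_{gen}|_E$. Since $p^d_{gen}$ is a complete definable type (Fact~\ref{fact:ACFA}(5)), $\bar t$ and $(a_1,\ldots,a_d)$ have the same type over $K$ in $\monster$; hence every finite fragment of $\qftp(\bar a/K)\cup\{x_i=t_i:i\leq d\}$ is satisfiable in $\monster$, and saturation produces the required $\bar b\in V(\monster)$ with $\trfdeg_E(\bar b)\geq d$. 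This is presumably what the paper means by ``immediate from Fact~\ref{fact:ACFA}(1)''.
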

The next lemma is presumably well-known, we include it due to the lack of references.

\begin{lemma}\label{ultratrd}
Let $K=(K,\tau):=\prod_{i\rightarrow\mathcal{U}}(K_i,\tau_i)$ be an ultraproduct of difference fields.
Let $V=\prod_{i\to\mathcal{U}}V_i\subseteq \Aa^n_{(K,\tau)}$ be a difference variety over $K$. Then $$\tdim_K(V)=\lim_{i\to\mathcal{U}}\tdim_{K_i}(V_i).$$
\end{lemma}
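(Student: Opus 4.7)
The approach is to transfer the question to $\ACFA$-extensions, where the uniform definability of $\dim_{\mathrm{ACFA}}$ combines with \L o\'s's theorem.

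\emph{Step 1.} I would first show that for any difference field $F$, any $\ACFA$-extension $F \leq E$, and any difference variety $V$ over $F$, one has $\dim_{\mathrm{ACFA}}(V(E)) = \tdim_F(V)$. Fact~\ref{lem:ACFA_comp_dim} already gives the inequality $\tdim_F(V) \geq \dim_{\mathrm{ACFA}}(V(E))$. For the reverse direction, take a generic point $a$ of an irreducible component of $V$ of maximal transformal dimension in some difference field extension of $F$, and realize a non-forking extension of its $\ACFA$-type to $E$ in a sufficiently saturated $\ACFA$-extension $E^* \succcurlyeq E$. Since $\ACFA$ is simple and non-forking extensions preserve transformal transcendence degree, this produces a point of $V(E^*)$ whose transformal transcendence degree over $E$ equals $\tdim_F(V)$.

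\emph{Step 2.} For each $i$, pick an $\ACFA$-extension $K_i \leq E_i$ (possible by Fact~\ref{fact:ACFA}(1)) and set $E := \prod_{i \to \mathcal{U}} E_i$. By \L o\'s, $E \models \ACFA$, the canonical inclusion $K \hookrightarrow E$ is a difference-field embedding, and $V(E) = \prod_{i \to \mathcal{U}} V_i(E_i)$, using that the finitely many defining polynomials of $V$ are ultraproducts of those of the $V_i$ (which must have uniformly bounded complexity, else $V = \prod_{i \to \mathcal{U}} V_i$ would not be a difference variety in the first place).

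\emph{Step 3.} By the Remark after Fact~\ref{fact:ACFA}, for each $d$ there is a first-order formula $\psi_d(z)$ in parameters $z$ for the defining polynomials such that, uniformly across all completions of $\ACFA$, $E' \models \psi_d(b)$ iff $\dim_{\mathrm{ACFA}}(V_b(E')) \geq d$. Combining \L o\'s's theorem with Step 1 (applied both to $E/K$ and to each $E_i/K_i$) gives
\[
\tdim_K(V) \geq d \iff E \models \psi_d(\bar P) \iff \{i : E_i \models \psi_d(\bar P_i)\} \in \mathcal{U} \iff \lim_{i \to \mathcal{U}} \tdim_{K_i}(V_i) \geq d,
\]
which yields the equality. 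The main conceptual obstacle is Step 1: without the simplicity/amalgamation properties of $\ACFA$ one has only the inequality from Fact~\ref{lem:ACFA_comp_dim}, and a poorly chosen $\ACFA$-extension could a priori fail to witness the transformal dimension; once Step 1 is in hand the remainder is a routine \L o\'s argument.
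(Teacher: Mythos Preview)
Your Step~1 is false, and this is precisely the subtlety that makes the lemma non-trivial. It is \emph{not} true that every $\ACFA$-extension $E$ of $F$ satisfies $\dim_{\mathrm{ACFA}}(V(E))=\tdim_F(V)$; Fact~\ref{lem:ACFA_comp_dim} is genuinely a maximum, not a constant. The paper says this explicitly just after Theorem~\ref{thm-main1} (``It can happen that $\dim_{\mathrm{ACFA}}(X(E))<\tdim_F(X)$''), and Example~\ref{eg:dim-fail} gives a concrete instance: a one-dimensional difference variety over $\Ff_2$ for which $V(K_q)=\emptyset$ for infinitely many $q$.

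Your non-forking argument fails because $\ACFA$ does not eliminate quantifiers and is not complete over an arbitrary difference subfield $F$. A generic point $a$ of a top-dimensional component of $V$ lives in \emph{some} model $E'\models\ACFA$ over $F$, but there is no reason $E'\equiv_F E$, so $\tp_{E'}(a/F)$ need not be realised in any $E^*\succcurlyeq E$. Amalgamating $E$ and $F(a)_\sigma^{alg}$ over $F$ inside a model of $\ACFA$ would require compatibility of the two extensions of $\sigma$ to $F^{alg}$, which is exactly what can fail (cf.\ Fact~\ref{fact:ACFA}(3) and Babbitt's theorem).

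The paper's proof therefore treats the two inequalities asymmetrically. For $d'\leq d$ it chooses, for each $i$, an $\ACFA$-extension $E_i$ \emph{witnessing} $d_i$, takes the ultraproduct, and uses definability of $\dim_{\mathrm{ACFA}}$ together with Fact~\ref{lem:ACFA_comp_dim}; this part is close to your Steps~2--3. For $d\leq d'$ it starts from a single $\ACFA$-extension $E$ of $K$ witnessing $d$, expresses ``$\dim_{\mathrm{ACFA}}(V_c)=d$'' by a bounded existential formula via Fact~\ref{fact:ACFA}(4), and then uses Babbitt's theorem and compactness to push the witness down to finite-degree extensions of $\mathcal{U}$-many $K_i$. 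This last step is the real content you are missing: one must transfer the \emph{choice} of compatible difference structure from $K$ to the $K_i$, not just the parameters.
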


\begin{proof}
Set $d:=\tdim_K(V)$, $d_i:=\tdim_{K_i}(V_i)$ and $d':=\lim_{i\to\mathcal{U}}d_i$. We first prove that $d'\leq d$. By  Fact~\ref{lem:ACFA_comp_dim}, for any $i$ there is an embedding $f_i:(K_i,\tau_i)\hookrightarrow (E_i,\tau_i')\models \mbox{ACFA}$ such that $\dim_{\mathrm{ACFA}}(V_i(E_i))=d_i$. Moreover, $(K,\tau)\hookrightarrow\prod_{i\rightarrow\mathcal{U}}(E_i,\tau_i')=:(\widetilde{E},\tilde{\tau})\models\mbox{ACFA}$. As $\dim_{\mathrm{ACFA}}$ is definable (uniformly in all completions of $\mbox{ACFA}$), $\dim_{\mathrm{ACFA}}(V(\widetilde{E}))=d'$. Therefore, $d'\leq d$ by Fact~\ref{lem:ACFA_comp_dim}.

Now we show that $d\leq d'$. Let $f:(K,\tau)\hookrightarrow (E,\tau')$ be an embedding such that $(E,\tau')\models \mbox{ACFA}$ and $\dim_{\mathrm{ACFA}}(V(E))=d$. Suppose $V$ is defined by $\varphi(x,c)$ for $c=(c_i)_{i\to \mathcal{U}}$ in $K$ where $\varphi(x,z)$ is an $\mathcal{L}_\sigma$-formula and $\varphi(x,c_i)$ defines $V_i$ for each $i$. Since transformal dimension is definable in $\mbox{ACFA}$, there is an $\mathcal{L}_\sigma$-formula $\phi(z)$ such that for any $E'\models \mathrm{ACFA}$ and $e\in E'$ with $E'\models \phi(e)$, the formula $\varphi(x,e)$ defines a difference variety $V_e$ such that $\dim_{\mathrm{ACFA}}(V_e(E'))=d$. 

Applying Fact~\ref{fact:ACFA}(4), we may assume that $\phi(z)$ is bounded existential, i.e., of the form $\exists y\psi(z,y)$ with the properties stated in Fact~\ref{fact:ACFA}(4).

It follows in particular that $(K^{\text{alg}},\tau'\upharpoonright_{K^{\text{alg}}})\models \exists y\psi(c,y)$. Set $K_{cor}:=\mbox{Core}(K^{\text{alg}}/K)$. Let $\mbox{ACF}_{\sigma}$ denote the $\mathcal{L}_\sigma$-theory of algebraically closed fields with an automorphism. Then
Babbitt's theorem~(see Fact~\ref{fact:babbit}) implies

$$\mbox{ACF}_{\sigma}\cup\mathrm{qftp}(K_{cor},\tau'\upharpoonright_{K_{cor}})\models \exists y\psi(c,y).$$ 

By compactness, there is a quantifier-free sentence $\xi(a)$ with $a$ a tuple containing $c$ in $K_{cor}$ such that $\mbox{ACF}_{\sigma}\cup\{\xi(a)\}\models \exists y\psi(c,y).$ Since $a\in K_{cor}$, there is a natural number $n$ such that the smallest field extension of $K$ containing $a$ that is closed under $\tau'$ is of degree $n$. Thus there is a first-order statement expressing the following: \emph{There is a degree $n$ difference field extension of $K$ containing $u$ such that $\xi(u)$ holds.}
By \L o\'{s}'s Theorem, for $\mathcal{U}$-many $i$'s there is $(K_i,\tau_i)\leq (K_i',\tau_i')$ where $K_i'$ is a degree $n$ difference field extension of $K_i$ and $c_i\subseteq a_i\subseteq K_i'$ such that $(K_i',\tau_i')\models \xi(a_i)$. 

Then for any embedding of $(K_i',\tau_i')$ into some $E_i\models\mbox{ACFA}$, we have $E_i\models\exists y\psi(c_i,y)$ as $E_i\models\xi(a_i)$. Thus,  $\dim_{\mathrm{ACFA}}V_i(E_i)=d$ and thus $d_i\geq d$ for $\mathcal{U}$-many $i$'s as desired.\qedhere

\end{proof}

\bigskip

\subsection{Valued difference fields}\label{sec: vdf}
In this section, we present the necessary material on valued difference fields which we will need for our purposes. The results in arbitrary characteristic are contained in the preprint \cite{DoHr22} by Dor and EH.  In characteristic 0, everything we will use is contained in Durhan's work \cite{Azg10} (see also \cite{ChHi14}).

An \emph{ordered difference group }is a structure of the form $\langle \Gamma,0,+,<,\sigma\rangle$, where $\langle\Gamma,0 ,+,<\rangle$ is an ordered 
abelian group and $\sigma$ is an automorphism of $\langle\Gamma,0 ,+,<\rangle$. The automorphism $\sigma$ is called \emph{$\omega$-increasing} 
if $\sigma(\gamma)> n\gamma$ for all $\gamma\in\Gamma_{>0}$ and all natural numbers $n$. We treat ordered difference groups as first-order structures in the 
language $\mathcal{L}_{OGA}=\{0,+,<,\sigma\}$; the theory of $\omega$-increasing ordered difference groups may be axiomatised in $\mathcal{L}_{OGA}$, and we denote it by 
$\omega OGA$. Any model of $\omega OGA$ is naturally an ordered $\Zz[\sigma]$-module, where $\Zz[\sigma]$ is the ordered ring of polynomials 
in the indeterminate $\sigma$ with $\sigma\gg1$. Divisible modules of such kind correspond to ordered vector spaces 
over the ordered fraction field $\Qq(\sigma)$ of $\Zz[\sigma]$. The theory of non-trivial divisible ordered $\Zz[\sigma]$-modules will be denoted by $\widetilde{\omega OGA}$. It is easy to see that $\widetilde{\omega OGA}$ is the model-completion of $\omega OGA$.

Now let $(K,\val)$ be a valued field, with value group $\Gamma_K$. We will denote by $\val:K\rightarrow\Gamma_K\cup\{\infty\}$ the valuation map. Let $\Oo_K=\{x\in K\mid \val(x)\geq0\}$ be the corresponding valuation ring, with unique maximal ideal $\mm_K=\{x\in K\mid \val(x)>0\}$ and residue field $k_K=\Oo_K/\mm_K$, and let $\mathrm{res}:\Oo_K\rightarrow k_K$ be the residue map.

A \emph{valued difference field }is a valued field $(K,\val)$ together with a distinguished automorphism 
of $(K,\val)$, i.e., a field automorphism $\sigma$ of $K$ satisfying $\sigma(\Oo_K)=\Oo_K$. Note that $\sigma$ induces 
an automorphism $\overline{\sigma}$ of the residue field, making it an inversive difference field. Similarly, $\sigma$ induces an 
automorphism $\sigma_\Gamma$ of the value group, making it an ordered difference group. 

We will treat valued difference fields in the three-sorted language $\mathcal{L}_{\rf,\vg,\sigma}$, consisting of 
\begin{itemize}
\item the language of difference rings $\mathcal{L}_{\vf}=\{0,1,+,\times,\sigma\}$ on the valued field sort denoted by $\vf$;
\item (a copy of) the language of difference rings  $\mathcal{L}_{\rf}=\{0,1,+,\times,\overline{\sigma}\}$ on the residue field sort denoted by $\rf$;
\item the language of ordered difference groups (with an additional constant for $\infty$) given by $\{0,<,\infty,+,\sigma_{\vg}\}$ on the value group sort denoted by $\vg$, and
\item the functions $\val:\vf\rightarrow\vg$ and $\res:\vf\rightarrow\rf$ between the sorts. (When considering a valued field as an $\mathcal{L}_{\rf,\vg,\sigma}$-structure, 
we make the function $\res$ total by sending elements of negative valuation to $0\in\rf$.)
\end{itemize}

\begin{definition}
A valued difference field $\mathcal{K}=(K,\Gamma_K,k_K,\val,\sigma)$ is called \emph{contractive} if its value group $\Gamma_K$ is an $\omega$-increasing ordered difference group. The $\mathcal{L}_{\rf,\vg,\sigma}$-theory of contractive valued difference fields will be denoted by $\omega VFA$.
\end{definition}

\begin{example}\label{E:Hahn}
\begin{enumerate}
    \item Let $(k,\overline{\sigma})$ be a difference field and let $(\Gamma,\sigma_\Gamma)$ be an $\omega$-increasing ordered difference group. On the Hahn series field $K:=k((t^\Gamma))$ one may define an automorphism $\sigma$, setting $\sigma(\sum_\gamma c_\gamma t^\gamma):=\sum_\gamma \overline{\sigma}(c_\gamma)t^{\sigma_\Gamma(\gamma)}$. Then $(K,\Gamma,k,\val_t,\sigma)$ is a contractive valued difference field with residue field $(k,\overline{\sigma})$ and value group $(\Gamma,\sigma_\Gamma)$.
    \item For $q=p^n\in Q$, consider the valued difference field $\mathcal{L}_q:=(\Ff_q(t)^{alg},\Qq,\Ff_q^{alg},\val_t,\frob_q)$, where $\val_t$ denotes an extension of the $t$-adic valuation on $\Ff_q(t)$ to the algebraic closure. Let $\mathcal{U}$ be a non-principal ultrafilter on $Q$. Then $\prod_{q\to \mathcal{U}}\mathcal{L}_q\models \omega VFA$.
\end{enumerate}
\end{example}

We now state some of the main results from \cite{DoHr22}.

\begin{fact}[{\cite[Thm~9.10, Thm~9.14 and Thm~9.22]{DoHr22}}]\mbox{}\label{omegaVFA}
\begin{enumerate}
    \item The theory $\omega VFA$ admits a model-companion $\widetilde{\omega VFA}$.
    \item Let $\mathcal{F}=(F,\Gamma_F,k_F,\sigma)\models\omega VFA$ with $F=F^{alg}$, and let $F\subseteq E_i\models\widetilde{\omega VFA}$ for $i=1,2$. Then $E_1\equiv_F E_2$.
    \item In any model of $\widetilde{\omega VFA}$, the following holds:
    \begin{itemize}
        \item The residue field $\rf$ is stably embedded, with induced structure a pure model of ACFA. 
        \item The value group $\vg$ is stably embedded, with induced structure a pure model of $\widetilde{\omega OGA}$.
        \item $\rf$ and $\vg$ are orthogonal.
    \end{itemize}
    \item $\widetilde{\omega VFA}$ is the asymptotic theory of $\mathcal{L}_q$. Namely, 
    $$\widetilde{\omega VFA}=\{\phi\text{ a sentence in $\mathcal{L}_{\rf,\vg,\sigma}$}\mid \mathcal{L}_q\models \phi\text{ for all $q\in Q$ with $q\gg 0$}\}.$$ In particular, if $\mathcal{U}$ is a non-principal ultrafilter on $Q$, then $\prod_{q\to \mathcal{U}}\mathcal{L}_q\models \widetilde{\omega VFA}$. It also follows that $\widetilde{\omega VFA}\supseteq  ACFA$.\qedhere
\end{enumerate}
\end{fact}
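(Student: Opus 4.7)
The plan is to prove all four statements by an Ax--Kochen--Ershov style analysis, using the $\omega$-increasing hypothesis to push through a difference version of Hensel's lemma. First, I would isolate a concrete axiomatisation of $\widetilde{\omega VFA}$: the valued field is algebraically closed, the residue field sort is a pure model of ACFA, the value-group sort is a divisible $\omega$-increasing $\Zz[\sigma]$-module (i.e.\ a model of $\widetilde{\omega OGA}$), and a \emph{$\sigma$-henselianity} axiom scheme asserting that any $\sigma$-polynomial $P(x) \in \Oo_K[x]_\sigma$ whose reduction $\overline{P}$ has a simple residue zero lifts to an $\Oo_K$-solution. The key point, familiar from the Bélair--Macintyre--Scanlon setting, is that the $\omega$-increasing condition makes a single variable $x$ of positive valuation dominate all of $\sigma(x),\sigma^2(x),\ldots$ in valuation, so the difference Newton iteration mimics the ordinary one and converges in suitably saturated models.

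Next, for (1) and (2), I would establish an embedding lemma: given $\mathcal{F}=(F,\Gamma_F,k_F,\sigma)\models\omega VFA$ with $F$ algebraically closed, and two extensions $F\subseteq E_i\models\widetilde{\omega VFA}$, one builds a partial $F$-isomorphism between suitable countable substructures of $E_1$ and $E_2$ by a back-and-forth, extending along three channels: (a) value group extensions, handled by the model-completeness of $\widetilde{\omega OGA}$; (b) residue field extensions, handled by the model-completeness of ACFA together with Fact~\ref{fact:ACFA}(3) over the common algebraically closed base; and (c) immediate extensions of the valued field, handled by $\sigma$-henselianity together with algebraic closedness to subsume purely transcendental or algebraic steps. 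The $\omega$-increasing hypothesis is used to show that every finitely generated extension decomposes into steps of these three types, precisely as in Durhan's treatment~\cite{Azg10} of the characteristic $0$ case. This simultaneously yields the existence of a model companion (joint embedding and amalgamation over prime substructures follow from the embedding lemma) and its completeness over algebraically closed valued difference subfields.

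With a quantifier-elimination relative to the residue field and value group sorts in hand, the structural content of (3) becomes formal: any subset of $\rf^n$ definable with parameters from $\vf\cup\vg$ reduces to one definable from residue field parameters alone, giving stable embeddedness of $\rf$ as a pure ACFA-model, and symmetrically for $\vg$ as a pure $\widetilde{\omega OGA}$-model; orthogonality follows because the two sorts share no nontrivial induced structure through the valued field. Finally, for (4), asymptotic theories coincide with the common theory of non-principal ultraproducts, so it suffices to check that $\prod_{q\to\mathcal{U}}\mathcal{L}_q$ satisfies the axioms of $\widetilde{\omega VFA}$: the residue field of $\mathcal{L}_q$ is $(\Ff_q^{alg},\frob_q)$, whose ultraproduct is a model of ACFA by Fact~\ref{fact:ACFA}(6); the value group $(\Qq,\cdot q)$ gives in the ultraproduct a divisible $\omega$-increasing group because $q\gamma > n\gamma$ for any standard $n$ once $q>n$; and $\sigma$-henselianity is verified by a Newton-iteration argument inside $\Ff_q(t)^{alg}$ with explicit bounds on residual errors depending on the $\sigma$-degree, bounds which transfer to the ultraproduct by \L o\'s. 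The principal obstacle is precisely this last step: establishing $\sigma$-henselianity in the ultraproduct requires a delicate estimate on how the valuation of $P(x_0+\epsilon)-P(x_0)-P'(x_0)\epsilon$ behaves when $P$ involves $\sigma^i(x)$ for various $i$, and it is here that the $\omega$-increasing hypothesis does its essential work by forcing $\sigma(\epsilon),\sigma^2(\epsilon),\ldots$ to be negligible compared to $\epsilon$.
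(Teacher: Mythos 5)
This statement is imported verbatim as a \emph{Fact}, cited from \cite[Thm~9.10, Thm~9.14 and Thm~9.22]{DoHr22}; the paper supplies no proof of its own, so there is no in-paper argument to compare against. Your proposal is therefore an attempt to reconstruct a substantial body of external work from scratch. The general Ax--Kochen--Ershov strategy you outline is indeed the right framework, and you correctly identify the essential role of the $\omega$-increasing hypothesis: it forces $\sigma(\epsilon)$ to be negligible relative to $\epsilon$, so that the difference Newton iteration converges as in the ordinary henselian case. The three-channel back-and-forth (value group / residue field / immediate) is also the standard shape of such arguments.

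That said, the proposal has two genuine gaps. First, you defer the hardest part to Durhan ``precisely as in the characteristic~$0$ case,'' but the whole content of \cite{DoHr22} beyond \cite{Azg10} is the positive and mixed characteristic situation, where wild ramification, inseparability, and the interplay between $\sigma$ and the arithmetic Frobenius all arise; none of that is addressed, and it is precisely there that the formulation of $\sigma$-henselianity, the decomposition of extensions into your three channels, and the sorts needed for relative quantifier elimination all become delicate. Second, in item~(4) you assert that ``it suffices to check that $\prod_{q\to\mathcal{U}}\mathcal{L}_q$ satisfies the axioms of $\widetilde{\omega VFA}$,'' but this only gives the inclusion $\widetilde{\omega VFA}\subseteq T_{\mathrm{asym}}$. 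The reverse inclusion --- that every sentence holding in $\mathcal{L}_q$ for $q\gg0$ is a consequence of $\widetilde{\omega VFA}$, equivalently that every completion of $\widetilde{\omega VFA}$ is realized (up to elementary equivalence) by some non-principal ultraproduct of the $\mathcal{L}_q$ --- is the harder half and requires a Chebotarev-style density argument on top of the completeness statement~(2); your sketch omits it entirely. A smaller point: joint embedding and amalgamation do not by themselves yield the \emph{existence} of a model companion; one must separately verify that the proposed axioms pick out exactly the existentially closed models of $\omega VFA$.
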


An explicit axiomatisation of $\widetilde{\omega VFA}$ is given in \cite[Definition~0.8]{DoHr22}. We do not state the axiomatisation, as we will not make use of it.

\begin{definition}
In $\widetilde{\omega VFA}$, the \emph{generic type} $p_{\Oo}(x)$ of $\Oo$ is the (a priori only partial) global definable type of an element $x\in\Oo$ such that 
$\res(x)$ is transformally transcendental.  
\end{definition}

\begin{fact}
In any completion of $\widetilde{\omega VFA}$, the type $p_{\Oo}(x)$ is a complete definable type. Moreover, $p_{\Oo}(x)$ commutes with itself, i.e., $p_{\Oo}(x)\otimes p_{\Oo}(y)=p_{\Oo}(y)\otimes p_{\Oo}(x)$.
\end{fact}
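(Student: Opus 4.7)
The plan is to derive all three assertions — completeness, definability, and self-commutativity of $p_\Oo$ — from the analogous properties of the generic type $p_{gen}$ in the pure ACFA structure on the residue field, via the bridge provided by Fact~\ref{omegaVFA}(3) (stable embeddedness of $\rf$ as pure ACFA, orthogonal to $\vg$) together with the relative completeness statement Fact~\ref{omegaVFA}(2).

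First I would fix a monster $\mathcal{M}\models\widetilde{\omega VFA}$ and two realizations $x_1,x_2\in\mathcal{M}^*\succ\mathcal{M}$ of the partial type defining $p_\Oo$. Since each $\res(x_i)$ realizes the complete definable type $p_{gen}|_{\res(\mathcal{M})}$ in the residue ACFA (Fact~\ref{fact:ACFA}(5)), there is an $\res(\mathcal{M})$-isomorphism $\res(\mathcal{M})(\res(x_1))_\sigma\to\res(\mathcal{M})(\res(x_2))_\sigma$ of difference fields sending $\res(x_1)\mapsto\res(x_2)$. The crux is then to lift this to an isomorphism $\mathcal{M}(x_1)_\sigma\cong_\mathcal{M}\mathcal{M}(x_2)_\sigma$ of valued difference fields. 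The transforms $\sigma^j(x_i)$ have algebraically independent residues over $\res(\mathcal{M})$ by $p_{gen}^n=\otimes^n p_{gen}$ (Fact~\ref{fact:ACFA}(5)), and all of them lie in $\Oo^\times$; hence for every nonzero $f=\sum_\alpha a_\alpha\prod_j(\sigma^jx)^{\alpha_j}\in\mathcal{M}[x]_\sigma$, after dividing by a coefficient of minimal valuation the reduction $\bar f(\res(x_i),\res(\sigma x_i),\ldots)$ is nonzero in $\rf$, giving the Gauss identity $\val(f(x_i))=\min_\alpha\val(a_\alpha)$. This pins down the valuation on $\mathcal{M}[x_i]_\sigma$, and hence the isomorphism type of $\mathcal{M}(x_i)_\sigma$ as a valued difference field over $\mathcal{M}$, in terms of the residue data alone.

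To conclude completeness I would pass to a valued-difference-algebraic closure $F_i\supseteq\mathcal{M}(x_i)_\sigma$, using the henselianity of $\mathcal{M}$ to extend the valuation uniquely to the field-theoretic algebraic closure and Babbitt's theorem (Fact~\ref{fact:babbit}) to choose compatible extensions of $\sigma$ on both sides, so that $F_1\cong_\mathcal{M} F_2$ as algebraically closed models of $\omega VFA$. Fact~\ref{omegaVFA}(2) then applies to the two embeddings $F_1\hookrightarrow\mathcal{M}^*$ — the inclusion and the composition of $F_1\cong F_2$ with inclusion — yielding $\mathcal{M}^*\equiv_{F_1}\mathcal{M}^*$ under the two different identifications of $F_1$, and this translates into $\tp(x_1/\mathcal{M})=\tp(x_2/\mathcal{M})$. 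Definability then follows at no extra cost: the defining scheme of $p_{gen}$ in the pure ACFA structure (Fact~\ref{fact:ACFA}(5)) pulls back through $\res$, via stable embeddedness of $\rf$, to a defining scheme for $p_\Oo$.

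For self-commutativity the same analysis applies to a pair: if $x_1\models p_\Oo|_\mathcal{M}$ and $x_2\models p_\Oo|_{\mathcal{M}(x_1)_\sigma}$, then $(\res(x_1),\res(x_2))$ realizes $p_{gen}^2=p_{gen}\otimes p_{gen}$ over $\res(\mathcal{M})$, which is symmetric in its two arguments by Fact~\ref{fact:ACFA}(5); the Gauss computation applied to $\mathcal{M}[x_1,x_2]_\sigma$ shows that $\tp(x_1,x_2/\mathcal{M})$ is determined by the residue datum, so it equals $\tp(x_2,x_1/\mathcal{M})$. The main technical hurdle is the extension of the finite-level isomorphism to one of algebraically closed $\omega VFA$-models suitable for Fact~\ref{omegaVFA}(2): neither the valuation nor the automorphism extends uniquely to the field-theoretic algebraic closure in general, and henselianity together with Babbitt's theorem are exactly what is needed to arrange the extensions coherently.
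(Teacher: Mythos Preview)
Your overall architecture matches the paper's: establish the Gauss-valuation isomorphism $\mathcal{M}(x_1)_\sigma\cong_{\mathcal{M}}\mathcal{M}(x_2)_\sigma$ of valued difference fields, extend it to the algebraic closures, and invoke Fact~\ref{omegaVFA}(2). The self-commutativity and definability remarks are fine.

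The gap is in the extension step, which you correctly flag as the main hurdle but do not actually carry out. Two points. First, ``henselianity of $\mathcal{M}$'' is not what is used: $\mathcal{M}(x_i)_\sigma$ is \emph{not} henselian (it is essentially a rational function field with the Gauss valuation), so the valuation does not extend uniquely from it to its algebraic closure. The paper instead passes through the henselization $\mathcal{M}(x_i)_\sigma^{h}$; the isomorphism extends there by the universal property, and from a henselian base any field-automorphic extension to the algebraic closure automatically respects the valuation. Second, and more substantively, Babbitt's theorem does not by itself let you ``choose compatible extensions of $\sigma$'': it reduces the problem to matching the restrictions to the Core, and you give no reason why the Cores are under control. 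The paper's key technical input here is \cite[Proposition~6.16]{DoHr22}, which shows that $\mbox{Core}\bigl(\mathcal{M}(x_i)_\sigma^{alg}/\mathcal{M}(x_i)_\sigma^{h}\bigr)=\mathcal{M}(x_i)_\sigma^{h}$. With the Core trivial over the henselization, Babbitt gives the extension of $\sigma$ for free, and henselianity of the henselization then guarantees the valuation is respected. Without this Core-triviality result your sketch does not close; you should cite it (or reprove it) explicitly.
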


\begin{proof}
Let $\mathcal{K}=(K,\Gamma,k,v,\sigma)\models \widetilde{\omega VFA}$ and $a,a'\models p_{\Oo}|K$. We need to show that $\tp(a/K)=\tp(a'/K)$. By Fact~\ref{omegaVFA}(2), it is enough to show that 
there is an isomorphism of valued difference fields $(K(a)_{\sigma}^{alg},v,\sigma)\cong (K(a')_\sigma^{alg},v,\sigma)$ over $K$ sending $a$ to $a'$. 

By the uniqueness of the Gauss valuation and induction, we have a natural isomorphism $f:(K(a)_{\sigma},v,\sigma)\cong_K (K(a')_\sigma,v,\sigma)$ sending $a$ to $a'$. By the universal property of the henselization, this $f$ extends (uniquely) to an isomorphism $f^{h}:(K(a)_\sigma^{h},v,\sigma)\cong_K (K(a')_\sigma^{h},v,\sigma)$. By \cite[Proposition~6.16]{DoHr22}, $\mbox{Core}(K(a)_\sigma^{alg}/K(a)_\sigma^{h})=K(a)_\sigma^{h}$. Hence, $f^{h}$ extends to an isomorphism of difference fields  $f^{alg}:(K(a)_\sigma^{alg},\sigma)\cong (K(a')_\sigma^{alg},\sigma)$ by Babbitt's Theorem. As $K(a')_\sigma^{h}$ is henselian, any such $f^{alg}$ automatically respects the valuation, and so $f^{alg}$ is an isomorphism of valued difference fields, as required. 

The moreover part is clear.
\end{proof}

\begin{corollary}\label{cor-uniqueGeneric}
Let $F=(F,\sigma)$ be a difference subfield of a model of $\widetilde{\omega VFA}$.
    Let $p(x_1,\ldots,x_d)$ be the partial type (in the language of valued difference fields) over $F$ expressing that $x_i\in\Oo$ for all $i$ and that the tuple $(\res(x_1),\ldots,\res(x_d))$ is transformally independent over $\res(F)$. Then $p(x_1,\ldots,x_d)$ is a complete type over $F$.
    Indeed $p(x_1,\ldots,x_d)= \otimes^d p_{\Oo}|_F$.
\end{corollary}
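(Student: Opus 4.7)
The plan is to prove by induction on $d$ that $p(x_1,\ldots,x_d)$ is complete and coincides with $\otimes^d p_{\Oo}|_F$. The base case $d=1$ is precisely the preceding fact, so all the work lies in the inductive step.

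For the induction, given a realisation $(a_1,\ldots,a_d)\models p$, I would first observe that $a_1\in\Oo$ with $\res(a_1)$ transformally transcendental over $\res(F)$, so $a_1\models p_{\Oo}|_F$ by the $d=1$ case. The remaining task is to show that $(a_2,\ldots,a_d)$ satisfies the $(d-1)$-variable analogue of $p$ over $F(a_1)_\sigma$; the inductive hypothesis then delivers $(a_2,\ldots,a_d)\models \otimes^{d-1}p_{\Oo}|_{F(a_1)_\sigma}$, and assembling with the first step gives $(a_1,\ldots,a_d)\models\otimes^d p_{\Oo}|_F$. Since $\otimes^d p_{\Oo}|_F$ is complete (the Morley power of a complete definable type), and every realisation of $p$ realises it, $p$ itself is complete and equal to it.

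The technical heart of the inductive step is the identification $\res(F(a_1)_\sigma)=\res(F)(\res(a_1))_\sigma$. Because $\res(a_1)$ is transformally transcendental over $\res(F)$, for each $k$ the residue $\res(\sigma^k(a_1))=\overline{\sigma}^k(\res(a_1))$ is transcendental over $\res(F)(\res(a_1),\ldots,\overline{\sigma}^{k-1}\res(a_1))$, so the extension $F(a_1,\ldots,\sigma^{k-1}(a_1))\subseteq F(a_1,\ldots,\sigma^{k}(a_1))$ is a Gauss-type valuation extension and its residue picks up precisely $\overline{\sigma}^k(\res(a_1))$; taking the union over $k$ yields the claim. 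Granted this, transformal independence of $(\res(a_2),\ldots,\res(a_d))$ over $\res(F(a_1)_\sigma)=\res(F)(\res(a_1))_\sigma$ follows from transformal independence of $(\res(a_1),\ldots,\res(a_d))$ over $\res(F)$ by additivity of transformal transcendence degree in a tower (exchange for the transformal-algebraic pregeometry).

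The main obstacle, such as it is, is the residue-field computation above; it rests on the uniqueness of the valuation extension along a transformally transcendental element, i.e. on the same Gauss-valuation phenomenon that underlies the preceding fact (and which was used there to show $p_{\Oo}$ itself is complete). Commutativity of $p_{\Oo}$ with itself serves as a consistency check, since it ensures $\otimes^d p_{\Oo}|_F$ is symmetric in its variables, matching the manifest symmetry of $p$.
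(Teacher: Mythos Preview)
Your argument is correct. The paper states this corollary without proof, treating it as an immediate consequence of the preceding fact that $p_{\Oo}$ is a complete definable type commuting with itself; your induction on $d$ is the natural way to spell out that deduction, and the residue-field identification $\res(F(a_1)_\sigma)=\res(F)(\res(a_1))_\sigma$ via iterated Gauss extensions is indeed the key technical point.

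One small sharpening concerning the base case: the preceding fact proves completeness of $p_{\Oo}$ over a \emph{model} $K$, whereas for $d=1$ you need that any $a\in\Oo$ with $\res(a)$ transformally transcendental over $\res(F)$ already realises $p_{\Oo}|_F$, for $F$ an arbitrary difference subfield. So the base case is not \emph{literally} the preceding fact but rather its proof rerun over $F$: uniqueness of the Gauss valuation on $F(a)_\sigma$, passage to the henselization, triviality of the core, Babbitt, and then Fact~\ref{omegaVFA}(2) applied with base $F(a)_\sigma^{alg}$ (which is algebraically closed even if $F$ is not). This is not a gap --- the argument carries over verbatim --- but ``precisely the preceding fact'' slightly overstates matters.
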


Recall that if $K$ is a valued field and $X\subseteq K^n$, then the \emph{topological dimension} of $X$, denoted by $\mathrm{top.}\dim(X)$, is defined to be the maximal $d\leq n$ such that there is a projection $\pi:K^n\rightarrow K^d$ with $\mathrm{int}_{\val}(\pi(X))\neq\emptyset$, where $\mathrm{int}_{\val}$ denotes the interior in the valuation topology.

\begin{lemma}\label{topdim=trfdim}
Let $\mathcal{K}=(K,\Gamma,k,v,\sigma)\models \widetilde{\omega VFA}$ and $X\subseteq K^n$ be definable (with parameters) in the language of difference rings. Then $\mathrm{top.}\dim(X)=\dim_{\mathrm{ACFA}}(X)$.
\end{lemma}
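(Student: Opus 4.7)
The plan is to establish both inequalities, using the complete type $p_{\Oo}^{\otimes d}|_K$ of Corollary~\ref{cor-uniqueGeneric} as a bridge between valued-field and ACFA generics. Since both $\mathrm{top.}\dim$ and $\dim_{\mathrm{ACFA}}$ depend only on the formula defining $X$ and are preserved under elementary extensions, I first pass, if needed, to an $\aleph_1$-saturated elementary extension of $\mathcal{K}$.

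Suppose $\mathrm{top.}\dim(X) \ge d$. Then some projection $\pi\colon K^n \to K^d$ has $\pi(X)$ with non-empty interior, hence $\pi(X)$ contains an open polydisc that, after an affine change of coordinates over $K$, is $\Oo^d$. By Corollary~\ref{cor-uniqueGeneric}, the type $p_{\Oo}^{\otimes d}|_K$ is complete and its realizations have coordinates whose residues are transformally independent over $\res(K)$. A standard normalisation argument shows such a realization is transformally independent over $K$ itself: any hypothetical difference-polynomial relation with coefficients in $K$ can be rescaled to have coefficients in $\Oo$ with at least one unit, and reducing modulo $\mm$ would contradict the residue-independence. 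Hence $\dim_{\mathrm{ACFA}}(\Oo^d) = d$; since projections cannot increase transformal transcendence degree, $\dim_{\mathrm{ACFA}}(X) \ge \dim_{\mathrm{ACFA}}(\pi(X)) \ge d$.

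Conversely, suppose $\dim_{\mathrm{ACFA}}(X) \ge d$. Then for some projection $\pi$ onto $K^d$, the set $\pi(X)$ contains a realization of the ACFA-generic $d$-type $p_{gen}^d|_K$, which is complete by Fact~\ref{fact:ACFA}(5); hence the formula defining $\pi(X)$ belongs to $p_{gen}^d|_K$. By the normalisation argument again, any realization of $p_{\Oo}^{\otimes d}|_K$ also realizes $p_{gen}^d|_K$, and therefore lies in $\pi(X)$. By completeness of $p_{\Oo}^{\otimes d}|_K$, some formula of the form $\bigwedge_i v(x_i) \ge 0 \wedge \bar\theta(\res(x))$ in this type already implies membership in $\pi(X)$, where $\bar\theta$ is a formula over $\res(K)$ consistent with the ACFA-generic $d$-type on the residue field. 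By saturation, $\bar\theta$ is realised by some $\bar c \in k^d$; lifting to $c \in \Oo^d$ with $\res(c) = \bar c$, the entire open ball $c + \mm^d$ satisfies $\bar\theta(\res(\cdot))$ and so lies in $\pi(X)$, giving $\mathrm{top.}\dim(X) \ge d$.

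The delicate step is the last one: extracting an open polydisc from a generic type requires that generics of $\Oo^d$ over $K$ be distinguishable only through their residues. This in turn rests on the orthogonality of residue field and value group (Fact~\ref{omegaVFA}(3)) together with the completeness of $p_{\Oo}^{\otimes d}|_K$, both features of $\widetilde{\omega VFA}$ rather than of pure ACFA.
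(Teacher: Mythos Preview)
Your argument is essentially correct and takes a genuinely different route from the paper, but there is one slip worth flagging.

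\textbf{The slip.} In the direction $\mathrm{top.}\dim(X)\ge d\Rightarrow\dim_{\mathrm{ACFA}}(X)\ge d$ you write that the open polydisc inside $\pi(X)$ ``after an affine change of coordinates over $K$, is $\Oo^d$''. In a model of $\widetilde{\omega VFA}$ the value group is divisible, so $\Oo$ is closed but \emph{not} open; no affine map over $K$ sends an open polydisc to $\Oo^d$. The fix is immediate: an open polydisc contains a strictly smaller \emph{closed} polydisc $\prod_i\{x:v(x-c_i)\ge\gamma_i'\}$ with $\gamma_i'>\gamma_i$, and after the affine change $y_i\mapsto c_i+\lambda_i y_i$ (with $v(\lambda_i)=\gamma_i'$) this closed polydisc becomes $\Oo^d$. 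Your normalisation argument then goes through unchanged. Alternatively, scale a realisation of $p_{\Oo}^{\otimes d}|_K$ by an element of $\mm\setminus\{0\}$ to land inside $\mm^d$. Either way the idea is sound; only the word ``open'' needs adjusting.

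\textbf{Comparison with the paper.} The paper argues the two directions with two different devices. For $\mathrm{top.}\dim\ge d\Rightarrow\dim_{\mathrm{ACFA}}\ge d$ it passes to a saturated extension and picks an \emph{infinitesimal} tuple $a$ with $v(a_i)>\Gamma_{K(a_1,\ldots,a_{i-1})_\sigma}$, so that $a+b\models p^d_{gen}|_K$ for every $b\in K^d$; any interior point $b$ of $X$ then yields a generic $a+b\in X$. For the converse it observes (implicitly via compactness) that a formula in $p^d_{gen}|_K$ must contain a Zariski--Cohn open set $\bigwedge_j P_j\neq 0$, and such sets are open in the valuation topology since difference polynomials are continuous. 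Your proof instead uses the single complete type $p_{\Oo}^{\otimes d}|_K$ as a two-way bridge: its realisations are ACFA-generic (by residue normalisation), and its axiomatisation by $v(x_i)\ge 0$ together with residue conditions lets you extract, via compactness, an honest open ball $c+\mm^d$ inside $\pi(X)$. Your approach is more uniform and makes the role of Corollary~\ref{cor-uniqueGeneric} explicit; the paper's is slightly shorter once one unpacks the compactness step. Your invocation of saturation to realise $\bar\theta$ in $k$ is harmless but unnecessary: a finite conjunction $\bigwedge_j P_j\neq 0$ defines a non-empty Zariski--Cohn open subset of $k^d$ simply because $k$ is non-periodic.
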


\begin{proof}
Note that $\dim_{\mathrm{ACFA}}(X)$ is given by the maximal $d$ such that there is a projection $\pi:K^n\rightarrow K^d$ with the property that the ACFA-generic type $p^d_{gen}|_K$ concentrates on $\pi(X)$. 

It is thus enough to show that if $\phi$ a formula  in the language of difference rings with parameters from $K$ and $X=\phi(K)\subseteq K^d$, then $\mathrm{int}_{\val}(X)\neq\emptyset$ if and only if $p^d_{gen}|_K$ concentrates on $X$.  Let $(K,\val,\sigma)\preccurlyeq (\monster,\val,\sigma)$, with $(\monster,\val,\sigma)$ sufficiently saturated,  and choose $a=(a_1,\ldots,a_d)$ in $\monster$ such that 
$\val(a_i)>\Gamma_{K(a_1,\ldots,a_{i-1})_{\sigma}}$ for all $i$. Then $a+b\models p^d_{gen}|_K$ for any $b\in K^d$. As for any $b\in \mathrm{int}_{\val}(X)$ we have $a+b\in \phi(\monster)$, this shows one direction.  For the converse, assume that $p^d_{gen}|_K$ concentrates on $X$. As the zero set of any non-trivial difference polynomial over $K$ is closed in the valuation topology, $p^d_{gen}|_K(\monster)$ is (non-empty and) open in the valuation topology. Thus $\mathrm{int}_{\val}(\phi(\monster))\neq\emptyset$, and so $\mathrm{int}_{\val}(X)\neq\emptyset$ by elementarity, since the interior of a definable set is definable in $\mathcal{L}_{\rf,\vg,\sigma}$.
\end{proof}

\begin{remark}\label{rm-saturatedModel}
Let $(F,\sigma)$ be an arbitrary difference field. As $\widetilde{\omega VFA}$ is the model-companion of $\omega VFA$, there is a model  $(K,\val,\sigma)\models\widetilde{\omega VFA}$ containing $(F,\sigma)$ as a trivially valued difference subfield. 

Actually, there is a more specific construction. Let $(k,\overline{\sigma})\models ACFA$ such that $(F,\sigma)\subseteq(k,\overline{\sigma})$, and let  $(\Gamma,\sigma_\Gamma)\models\widetilde{\omega OGA}$. Then the Hahn difference valued field  $\mathcal{K}=(k((t^{\Gamma})),\Gamma,k,\val_t,\sigma)$ from Example~\ref{E:Hahn}(1) is a model of $\widetilde{\omega VFA}$, containing $(F,\sigma)$ as a trivially valued difference subfield. Moreover, for an infinite cardinal $\kappa$, by \cite[Proposition~9.13]{DoHr22}, $\mathcal{K}$ is $\kappa^+$-saturated if and only if both $(k,\overline{\sigma})$ and $(\Gamma,\sigma_\Gamma)$ are $\kappa^+$-saturated.
\end{remark}

\begin{lemma}\label{lemma-lift}
    Let $(K,\Gamma,k,v,\sigma)\models \widetilde{\omega VFA}$ and let $(F,\sigma)$ be a trivially valued difference subfield of $K$. Then $(F,\sigma)$ is contained in a difference field of representatives $(\tilde{k},\sigma)$, i.e., a (trivially valued) difference subfield of $K$ such that $\res:(\tilde{k},\sigma)\cong(k,\overline{\sigma})$.
\end{lemma}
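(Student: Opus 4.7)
The plan is to apply Zorn's Lemma on the partially ordered set $\mathcal{S}$ of trivially valued difference subfields of $K$ containing $F$, ordered by inclusion, to produce a maximal element $\tilde{k}$. The Zorn hypothesis is routine: $F \in \mathcal{S}$, and the union of a chain in $\mathcal{S}$ remains trivially valued and $\sigma$-closed. Since $\tilde{k}$ is trivially valued, $\res|_{\tilde{k}}$ is automatically injective, so the lemma reduces to proving $\res(\tilde{k}) = k$, which I would argue by contradiction.

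Given a maximal $\tilde{k}$, I would first use maximality, together with henselianity and perfectness of $K$ (both properties of models of $\widetilde{\omega VFA}$), to derive two closure properties: (i) $\tilde{k}$ is perfect, since $\tilde{k}^{1/p} \cap K$ is trivially valued (the unique extension of a trivial valuation along a purely inseparable extension remains trivial) and $\sigma$-closed (using $\sigma(c^{1/p}) = \sigma(c)^{1/p}$ in characteristic $p$), hence coincides with $\tilde{k}$ by maximality; (ii) $\tilde{k}$ contains every element of $K$ that is separably algebraic over $\tilde{k}$, because for any such $\alpha$ the field $\tilde{k}(\alpha, \sigma(\alpha), \sigma^2(\alpha), \ldots)$ is a separably algebraic, hence trivially valued, $\sigma$-closed extension of $\tilde{k}$. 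Combined with Hensel's Lemma in the henselian $K$, these closure properties imply that $\res(\tilde{k})$ is algebraically closed in $k$.

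Now pick $\bar{a} \in k \setminus \res(\tilde{k})$; by the previous step $\bar{a}$ must be transcendental over $\res(\tilde{k})$ as a field element. If $\bar{a}$ is transformally transcendental, lift it to any $a \in \mathcal{O}_K$ with $\res(a) = \bar{a}$: the residues $\overline{\sigma}^i(\bar{a})$ are algebraically independent over $\res(\tilde{k})$, so every nonzero difference polynomial in $a$ over $\tilde{k}$ has nonzero residue, is a unit in $\mathcal{O}_K$, and $\tilde{k}(a)_\sigma$ is a trivially valued $\sigma$-extension properly containing $\tilde{k}$, contradicting maximality. Otherwise $\bar{a}$ is transformally algebraic; pick the minimal $n \geq 1$ such that $\overline{\sigma}^n(\bar{a})$ is algebraic over $\bar{M} := \res(\tilde{k})(\bar{a}, \ldots, \overline{\sigma}^{n-1}(\bar{a}))$, and — possibly after replacing $\bar{a}$ by some $\overline{\sigma}^N(\bar{a})$, and using perfectness of $\tilde{k}$ — arrange that the minimal polynomial of $\overline{\sigma}^n(\bar{a})$ over $\bar{M}$ is separable. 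Then invoke the difference Hensel's Lemma available in $\widetilde{\omega VFA}$ (cf.~\cite{DoHr22, Azg10}, using $\omega$-contractivity of $\sigma_\Gamma$ together with henselianity of $K$) to find $a \in \mathcal{O}_K$ with $\res(a) = \bar{a}$ satisfying the lifted difference equation. This $a$ generates a trivially valued $\sigma$-extension of $\tilde{k}$, again contradicting maximality.

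The principal obstacle is the transformally algebraic subcase: plain Hensel's Lemma on its own lifts only individual algebraic elements, not $\sigma$-compatible orbits, so one genuinely needs the difference Hensel-type principle provided by the $\omega$-contractive valued difference field theory. Once the lift $a$ is produced, verifying that $\tilde{k}(a)_\sigma$ is trivially valued amounts to observing that $a, \sigma(a), \ldots, \sigma^{n-1}(a)$ are algebraically independent over $\tilde{k}$ (their residues being so over $\res(\tilde{k})$) and that each subsequent iterate is algebraic over the previous $n$ via the lifted difference equation — so the whole $\sigma$-extension is algebraic over a trivially valued purely transcendental subextension, hence itself trivially valued. Arranging separability of the minimal polynomial in characteristic $p$ requires the mentioned replacement $\bar{a} \mapsto \overline{\sigma}^N(\bar{a})$, exploiting the standard difference-algebraic trick of stabilising the reduced limit degree.
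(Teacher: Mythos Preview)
The paper does not give a proof here; it simply cites \cite[Lemma~5.20]{DoHr22}, noting that the argument there works for any transformally henselian model of $\omega VFA$. Your Zorn-plus-$\sigma$-Hensel outline is exactly the standard method underlying that reference, so in substance you have reconstructed what the paper invokes rather than offered an alternative route.

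One point to tighten in the transformally algebraic case: in the $\omega$-increasing regime, the $\sigma$-Hensel lemma lifts approximate zeros that are simple with respect to the \emph{lowest}-order variable, since a perturbation of $a$ by $\epsilon$ induces a perturbation of $\sigma^i(a)$ of value $\sigma_\Gamma^i(v(\epsilon))\gg v(\epsilon)$ for $i\geq1$. Your setup produces a polynomial separable in the \emph{top} variable $X_n$, which is the wrong one. The fix is minor: first observe that maximality also forces $\tilde{k}$ to be inversive (as $\bigcup_m\sigma^{-m}(\tilde{k})$ is again a trivially valued difference subfield of $K$ containing $F$), so by minimality of $n$ the algebraic relation among $\bar{a},\ldots,\overline{\sigma}^n(\bar{a})$ genuinely involves $\bar{a}$; one may then instead take the minimal polynomial of $\bar{a}$ over $\res(\tilde{k})(\overline{\sigma}(\bar{a}),\ldots,\overline{\sigma}^n(\bar{a}))$ and arrange separability in $X_0$. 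With that adjustment, your verification that $\tilde{k}(a)_\sigma$ is algebraic over the trivially valued purely transcendental extension $\tilde{k}(a,\ldots,\sigma^{n-1}(a))$ goes through unchanged.
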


\begin{proof}
    The proof of \cite[Lemma~5.20]{DoHr22} shows this more generally for any transformally henselian model of $\omega VFA$.
\end{proof}

\begin{lemma}\label{lemma-transalg}
   Let $(F,\sigma)$ be a lift of the residue difference field in a model $(K,\Gamma_K,k_K,\sigma)$ of $\omega VFA$. Then $(F,\sigma)$ is transformally algebraically closed in $K$, namely if $a\in K$ is transformally algebraic over $F$, then $a\in F$. 
\end{lemma}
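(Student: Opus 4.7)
The plan is to split the analysis by $\val(a) \in \Gamma_K$. Suppose $a \in K$ satisfies $P(a) = 0$ for a non-zero $P \in F[x]_\sigma$, and write $P(x) = \sum_{j=1}^N c_j M_j(x)$ as a sum of pairwise distinct difference monomials $M_j(x) = \prod_{i \geq 0} \sigma^i(x)^{n_{i,j}}$ with coefficients $c_j \in F\setminus\{0\}$. Associate to each $M_j$ the polynomial $Q_j(\sigma) := \sum_i n_{i,j}\sigma^i \in \Nn[\sigma]$. Since $F$ is trivially valued, $\val(c_j) = 0$, and so $\val(c_j M_j(a)) = Q_j(\sigma_\Gamma)(\val(a))$.

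First I would handle the case $\gamma := \val(a) \neq 0$. The heart of the argument is the observation that the evaluation map $\Zz[\sigma] \to \Gamma_K$, $R \mapsto R(\sigma_\Gamma)(\gamma)$, is injective on any fixed $\gamma \neq 0$. Indeed, for $R = \sum_{i \leq n} r_i \sigma^i \in \Zz[\sigma]$ with leading coefficient $r_n \neq 0$, the $\omega$-increasing hypothesis gives $|\sigma_\Gamma^n(\gamma)| > N \cdot |\sigma_\Gamma^{n-1}(\gamma)|$ for every $N \in \Nn$, so in the ordered abelian group $\Gamma_K$ the leading term $r_n \sigma_\Gamma^n(\gamma)$ dominates and $R(\sigma_\Gamma)(\gamma) \neq 0$. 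Applying this to the differences $Q_j - Q_{j'}$ for $j \neq j'$, the valuations $\val(c_j M_j(a))$ are pairwise distinct, the minimum is attained uniquely, and hence $\val(P(a)) < \infty$; this contradicts $P(a) = 0$.

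For the remaining case $\val(a) = 0$, I would use that $\res$ restricts to a difference field isomorphism $F \xrightarrow{\sim} k_K$ to choose $b \in F$ with $\res(b) = \res(a)$, and set $a' := a - b$. Since $F$ is a difference subfield, $a'$ is again transformally algebraic over $F$, and $\val(a') > 0$ unless $a = b$; the previous case then forces $a' = 0$, i.e.\ $a = b \in F$. The main obstacle is the verification in the non-zero valuation case: one must leverage the $\omega$-increasing structure precisely enough to separate the valuations of \emph{all} difference monomials in $a$ simultaneously, and take care to avoid assuming that $F$ is inversive or algebraically closed --- only that it is a trivially valued difference subfield lifting the residue field along $\res$.
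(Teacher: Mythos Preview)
Your argument is correct and cleanly executed. The reduction in the $\val(a)=0$ case via subtracting a lift $b\in F$ of $\res(a)$ is exactly the right move, and the Newton-polygon style computation in the $\val(a)\neq0$ case is sound: distinct difference monomials $M_j$ correspond to distinct $Q_j\in\Nn[\sigma]$, and the $\omega$-increasing hypothesis makes evaluation at a nonzero $\gamma$ injective on $\Zz[\sigma]$, so the monomial valuations are pairwise distinct and the ultrametric inequality gives $\val(P(a))<\infty$. (You might want to note explicitly that $a=0$ is trivial, so that $\val(a)\in\Gamma_K$ is legitimate in Case~1.)

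The paper takes a somewhat different, more structural route. Rather than computing with a specific $P$, it argues by contrapositive: if $a\in K\setminus F$, then since $F$ is a \emph{maximal} trivially valued subfield (being a lift of $k_K$), the field $F(a)_\sigma$ is non-trivially valued; the $\omega$-increasing automorphism then forces the value group of $F(a)_\sigma$ to have infinite $\Qq$-rank, whence $\trdg(F(a)_\sigma/F)=\infty$ and $a$ is transformally transcendental. So the paper bypasses the case split on $\val(a)$ and the explicit polynomial manipulation, invoking instead the general inequality between $\Qq$-rank of the value group and transcendence degree. Your approach is more elementary and self-contained (it does not appeal to that inequality), while the paper's is shorter and highlights the conceptual reason: any nonzero valuation in $F(a)_\sigma$ generates an infinite-rank $\Zz[\sigma]$-submodule of $\Gamma_K$. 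Both ultimately hinge on the same ``leading term dominates'' consequence of $\omega$-increasingness; you unpack it at the level of a single polynomial, the paper at the level of the whole extension.
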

\begin{proof}
Let $a\in K\setminus F$. As $(F,\sigma)$ is a maximal trivially valued (difference) subfield of $(K,\sigma)$, the field $F(a)_{\sigma}$ is non-trivially valued, and so the value group of $F(a)_{\sigma}$ is of infinite $\Qq$-rank, since $\sigma_\Gamma$ is $\omega$-increasing. It follows that $F(a)_\sigma$ is of infinite transcendence degree over $F$, showing that   $a$ is transformally transcendental over $F$.
\end{proof}

\bigskip

\section{Local dimensions}\label{sec:loc_dim}
Let $X$ be a difference variety.   
 The goal of this section is to identify a special subvariety $X_s\subseteq X$ of strictly smaller transformal dimension, outside of which an analogue of the implicit function theorem holds.   For this purpose, we will need a local dimension for $X$, which requires a finer topology than the Zariski-Cohn topology. This topology will be provided by  
 enriching ACFA to $\widetilde{\omega VFA}$; we view our model of ACFA as the residue field
 of a model of $\widetilde{\omega VFA}$;  lifting to the valued field, we can study perturbations. We will show that if  X  is of transformal dimension d, $X$ contains a $d$
-dimensional neighborhood of each point from $X \setminus X_s$.

Recall that $\rf$ denotes the residue field sort.

\begin{theorem}\label{thm-main1}
Let $X\subseteq \Aa^n$ be a difference variety over a difference field $F$ of transformal dimension $\tdim_F(X)=d>0$. Then there exists a difference subvariety $X_s\subseteq X$ defined over $F$ with $\tdim_F(X_s)<\tdim_F(X)$, such that if $F\leq M\models \widetilde{\omega VFA}$ with $F$ trivially valued, then for each $x\in (X\setminus X_s)(\rf(M))$, in $M$ we have \[\mathrm{top.}\dim(X(\mathcal{O})\cap \res^{-1}(x))=d,\] where we write $X(\rf(M))$ when viewing $X$ as defined by parameters in $\res(F)\cong F$.
\end{theorem}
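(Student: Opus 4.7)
The plan is to define $X_s$ as the locus where a difference-algebraic implicit function theorem fails, and to verify both required properties simultaneously. First I would reduce to the case where $X$ is irreducible of transformal dimension exactly $d$: irreducible components of strictly smaller dimension can be absorbed into $X_s$, and components of dimension $d$ can be handled one at a time (their pairwise intersections have smaller transformal dimension and also go into $X_s$). Fix a transformal transcendence basis of $F(X)_\sigma$ over $F$, which after permuting coordinates we may take to be $x_1,\dots,x_d$. For each $j\in\{d+1,\dots,n\}$, the coordinate $x_j$ is transformally algebraic over $F(x_1,\dots,x_d)_\sigma$ on $X$, so there is a nonzero minimal $Q_j\in F[x_1,\dots,x_d,x_j]_\sigma$ vanishing on $X$. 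Minimality forces $\partial Q_j/\partial x_j$ not to vanish identically on $X$ (in the separable case; in positive characteristic one first applies the twisting reduction of Section~\ref{sec:twist_red} to force separability).

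Next I would define $X_s\subseteq X$ to be the common zero-set on $X$ of the nonzero difference polynomials $\partial Q_{d+1}/\partial x_{d+1},\dots,\partial Q_n/\partial x_n$, together with finitely many auxiliary nonzero conditions encoding non-degeneracy of the full transformal Jacobian of the system $(Q_{d+1},\dots,Q_n)$ and the local transversality of the chosen basis. Since each such polynomial is nonzero on the irreducible $X$, it does not vanish at the generic point, so $X_s$ is a proper difference subvariety, automatically satisfying $\tdim_F(X_s)<d$.

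For the lifting, let $F\le M=(K,\Gamma,k,v,\sigma)\models\widetilde{\omega VFA}$ with $F$ trivially valued; by Remark~\ref{rm-saturatedModel} such $M$ exists, and by Lemma~\ref{lemma-lift} we may identify $F$ with a subfield of $k$. Fix $x\in (X\setminus X_s)(k)$. The upper bound $\mathrm{top.}\dim(X(\mathcal{O})\cap\res^{-1}(x))\le d$ is immediate from Lemma~\ref{topdim=trfdim} applied to $X(K)$, together with monotonicity of topological dimension. For the lower bound, lift $x$ to some $\hat x\in\mathcal{O}^n$, and for each $\epsilon=(\epsilon_1,\dots,\epsilon_d)\in\mathfrak m^d$ set $a_i:=\hat x_i+\epsilon_i$ for $i\le d$. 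Since $\partial Q_j/\partial x_j(x)\ne 0$ for each $j>d$, a transformal Hensel's lemma in the henselian valued difference field $M$---supplied by the axiomatisation of $\widetilde{\omega VFA}$ in~\cite{DoHr22}---produces unique $a_j\in\mathcal{O}$ with $\res(a_j)=x_j$ and $Q_j(a_1,\dots,a_d,a_j)=0$. The resulting tuple lies in $X(\mathcal{O})$ because away from $X_s$ the system $Q_{d+1},\dots,Q_n$ cuts out $X$ locally, yielding a continuous family of lifts whose projection onto the first $d$ coordinates has non-empty interior; this gives $\mathrm{top.}\dim(X(\mathcal{O})\cap\res^{-1}(x))\ge d$.

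The principal obstacle I anticipate is making the implicit function theorem rigorous in this generality: each $Q_j$ involves not just $x_j$ but all its $\sigma$-iterates, so the Newton-type iteration must be shown to converge in the contractive valuation topology (using that $\sigma_\Gamma$ is $\omega$-increasing), and one must verify that $Q_{d+1},\dots,Q_n$ genuinely cut out $X$ locally away from $X_s$ rather than a union $X\cup X''$ with extraneous components of complementary dimension. A related subtlety is that the above definition a priori depends on a choice of transformal transcendence basis, so one must check that $X_s$ descends to a canonical $F$-definable subvariety; this should follow from Fact~\ref{omegaVFA}(2), since the condition ``$x$ admits no $d$-dimensional lift in $\mathcal{O}$'' depends only on the $\widetilde{\omega VFA}$-type of $x$ over $F$, which is determined by its $\mathcal L_\sigma$-type over $\res(F)$, so the failure locus is $F$-invariant and its Zariski-Cohn closure is defined over $F$.
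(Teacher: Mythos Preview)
Your proposal has a genuine gap, and it is precisely the one the paper singles out in its introduction as the central obstruction. You construct for each $j>d$ a minimal polynomial $Q_j\in F[x_1,\dots,x_d,x_j]_\sigma$ and then lift each coordinate $a_j$ separately via a Hensel-type argument. Even granting the transformal Hensel step, the tuple $(a_1,\dots,a_n)$ so produced satisfies $Q_{d+1}=\cdots=Q_n=0$, but there is no reason it lies in $X(\mathcal O)$: the variety cut out by $\{Q_j\}_{j>d}$ is in general strictly larger than $X$, and nothing you have imposed on $X_s$ forces these equations to generate $I(X)$ near $x$. You flag this yourself (``one must verify that $Q_{d+1},\dots,Q_n$ genuinely cut out $X$ locally''), but this is not a technicality to be checked---it is the heart of the problem. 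As the introduction notes, difference varieties need not be complete intersections, and it is unknown whether every $X$ can be fibered over a zero-dimensional base with complete-intersection fibers. Your construction of $X_s$ via the Jacobian of the $Q_j$'s does not resolve this, since the extra components of $V(Q_{d+1},\dots,Q_n)$ may well meet $X\setminus X_s$.

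The paper's proof takes a completely different route that sidesteps the complete-intersection issue. Rather than lifting coordinate by coordinate via Hensel, it uses the existence of a difference-field lift $\eta:k\hookrightarrow M$ of the residue field (Lemma~\ref{lemma-lift}) and the fact that $\eta(k)$ is transformally algebraically closed in $M$ (Lemma~\ref{lemma-transalg}). For $\bar a\in k^d$ generic over $\res(F)$, the fiber $X_{\eta(\bar a)}(M)$ is zero-dimensional in ACFA, hence already contained in $\eta(k)$; so \emph{every} point of the fiber over $\bar a$ in $k$ has a lift to $X(\mathcal O)$ over $\eta(\bar a)$. Then Corollary~\ref{cor-uniqueGeneric} says the type of any $a'\in\mathcal O^d$ with $\res(a')=\bar a$ equals that of $\eta(\bar a)$, so the same lifting holds over $a'$. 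A compactness argument then extracts finitely many polynomial inequalities $P_i\ne0$ on $\Aa^d$ guaranteeing this lifting, and $X_s$ is defined as the locus where all coordinate projections land in $\bigcup_i\{P_i=0\}$. This argument never needs to know generators of $I(X)$ at all; the lifting comes from type-determination rather than from solving equations, which is exactly what lets it handle non-complete-intersections.
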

\begin{proof}
Let $\Pi$ be the set of all\footnote{up to permutations of coordinates} $\binom{n}{d}$ coordinate projections from $\Aa^n$ to $\Aa^d$. Let $M\models \widetilde{\omega VFA}$ with $F$ embedded as a trivially valued difference subfield of $M$ and $M$ being $|F|^+$-saturated. 

For a variety $Y$ over $\vf(M)$, we will write $Y(M)$ for $Y( \vf(M))$.  

Let $k:=\rf(M)$. 
Note that $k$ is naturally an $F$-algebra via the residue map. 

Fix $\pi\in\Pi$ until further notice.   Let $\bar{a}\in \rf(M)^d$ realise the generic type in $\rf^d$ over $\res(F)$. 
 For $b\in k^d$, denote by $X_b$ the fiber over $b$ for the projection $\pi$, i.e.\ $X_b(k)=\{y\in k^{n-d}: (b,y)\in X(k)\}$. Since $X$ has transformal dimension $d$ and $\bar{a}$ is generic in $\rf^d$ over $\res(F)$, $\dim_{\ACFA}(X_{\bar{a}}(k))\leq 0$.

Fix an embedding of difference fields $\eta: k=\rf(M)\hookrightarrow M$ of $k$ into the valued difference field $M$ over $F$ such that $\eta\circ\res\upharpoonright_F=id$ and $\res\circ \eta=id$. Such an $\eta$ exists by Lemma~\ref{lemma-lift}. Since $M\models \widetilde{\omega VFA}$, by Lemma~\ref{lemma-transalg}, $\eta(k)$ is transformally algebraically closed in $M$ and $\dim_{\ACFA}(X_{\eta(\bar{a})}(M))\leq 0$. 
Therefore, $X_{\eta(\bar{a})}(\eta(k))=X_{\eta(\bar{a})}(M)$, which in particular implies $X_{\eta(\bar{a})}(M)\subseteq \mathcal{O}^{n-d}$ and $\res$ is a bijection from $X_{\eta(\bar{a})}(M)=X_{\eta(\bar{a})}(M)\cap\mathcal{O}^{n-d}$ to $X_{\bar{a}}(k)$. Let $a'\in \mathcal{O}^{d}$ with $\res(a')=\bar{a}$. Since $\res(a')=\res(\eta(\bar{a}))=\bar{a}$ and $\bar{a}\in k^d$ is generic over $\res(F)$, by Corollary~\ref{cor-uniqueGeneric} $a'$ is generic over $F$ and $\tp(a'/F)=\tp(\eta(\bar{a})/F)$ in $\widetilde{\omega VFA}$. In particular, the residue map is a bijection between $X_{a'}(M)=X_{a'}(M)\cap \mathcal{O}^{n-d}$ and $X_{\bar{a}}(k)$. Therefore, for any $z\in \pi^{-1}(\bar{a})\cap X(k)$, there is $z'\in \pi^{-1}(a')\cap X(\mathcal{O})$ with $\res(z')=z$. Hence, $z'\in\res^{-1}(z)\cap X(\mathcal{O})$ and $\pi(z')=a'$, namely, $a'\in \pi(\res^{-1}(z)\cap X(\mathcal{O}))$.

At this point we let $\pi$ vary again in $\Pi$.

Let $\bar{x}=(x_1,\ldots,x_d)$ and $\psi_{\pi}(\bar{x})$ be the formula stating that:
\begin{center}
$\bar{x}\in \rf^d$ and for all $z\in\pi^{-1}(\bar{x})\cap X(\rf)$, for all $x'\in \mathcal{O}^{d}$, if $\res(x')=\bar{x}$ then $x'\in\pi(\res^{-1}(z)\cap X(\mathcal{O}))$.    
\end{center}
 Let $\Delta$ be the collection of sentences given by $\widetilde{\omega VFA}$ together with the diagram of  $F$ as an embedded trivially valued difference subfield. Our previous argument shows that the following holds for every projection $\pi\in\Pi$: \[\Delta\land (\bar{x}\in \rf^d)\land\left(\bigwedge_{P\in \res(F)[x_1,\ldots,x_d]_{\sigma}, P\text{ non-trivial}}P(\bar{x})\neq 0\right)\models \psi_{\pi}(\bar{x}),\]  since $\bar{x}$ is generic in $\rf^d$ over $\res(F)$ if and only if
 $P(\bar{x})\neq 0$ for all non-trivial polynomials $P\in\res(F)[x_1,\ldots,x_d]_{\sigma}$. By compactness, there are finitely many non-trivial difference polynomials $P_0,\ldots,P_N\in \res(F)[x_1,\ldots,x_d]_{\sigma}$ such that $\psi_{\pi}(\bar{x})$ is a consequence of  $\Delta\land (\bar{x}\in \rf^d)\land \bigwedge_{i\leq N}P_i(\bar{x})\neq 0$. Let $Z_{\pi}\subseteq \Aa^d$ be the non-empty  (as $(P_i)_{i\leq N}$ are all non-zero) open set defined by not vanishing of $P_i$ for $i\leq N$ and let $Y_{\pi}:=X\cap\pi^{-1}(Z_{\pi})$.  Let $X_s:=X\setminus\left( \bigcup_{\pi\in \Pi}Y_{\pi}\right)$, then $X_s$ is a closed subvariety of $X$ defined over $F$ (we consider $P_1,\ldots,P_N$ defined over $F$ via the isomorphism $res:F\to\res(F)$). Moreover, if $M\models \widetilde{\omega VFA}$ and $F$ is embedded into the valued difference field with trivial valuation, then for each $z\in (X\setminus X_s)(\rf(M))$, there is $\pi\in\Pi$ such that $z\in Y_{\pi}$. Let $\bar{a}:=\pi(z)$, by the definition of $Y_{\pi}$, $M\models \psi_{\pi}(\bar{a})$ and $z\in\pi^{-1}(\bar{a})\cap X(\rf(M))$. Hence, for all $a\in\mathcal{O}^d$ with $\res(a)=\bar{a}$, we have $a\in\pi(\res^{-1}(z)\cap X(\mathcal{O}))$. In particular, $\pi(X(\mathcal{O})\cap \res^{-1}(z))$ contains a valuation open subset of $\mathcal{O}^d$. 

We only need to show that $\tdim_F(X_s)<d$. Note that by definition of $X_s$, for any completion of $\ACFA$ and $\pi\in\Pi$, we have $\pi(X_s)\subseteq \Aa^d\setminus Z_{\pi}$. Since $Z_{\pi}$ is open and non-empty in $\Aa^d$, $\dim_{\ACFA}(\pi(X_s)(E))<d$ for any $F\leq E\models \ACFA$. As this holds for all $\pi:\Aa^n\to \Aa^d$ and all embeddings $F\leq E\models \ACFA$, we get $\tdim_F(X_s)<d$.\qedhere

\end{proof}

Let $X$ be a difference variety defined over some difference field $F$ and let $F\subseteq E\models\mathrm{ACFA}$. It can happen that $\dim_{\mathrm{ACFA}}(X(E))<\tdim_F(X)$. The following corollary shows that in this case, $X(E)$ has no points outside special difference subvariety $X_s$.
\begin{corollary}
Let $X$ be a difference variety defined over $F$ of transformal dimension $d$. Let $X_s$ be the subvariety of $X$ over $F$ which is of strictly smaller transformal dimension given by Theorem~\ref{thm-main1}. Suppose $F$ is embedded into $E\models \mathrm{ACFA}$. Then either $\dim_{\mathrm{ACFA}}(X(E))=d$ or $X(E)=X_s(E)$.
\end{corollary}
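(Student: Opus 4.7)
The plan is to argue by contrapositive: assuming there exists $x \in (X \setminus X_s)(E)$, I will show $\dim_{\ACFA}(X(E)) = d$. Since Fact~\ref{lem:ACFA_comp_dim} immediately gives the upper bound $\dim_{\ACFA}(X(E)) \leq \tdim_F(X) = d$, only the matching lower bound is at stake.

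The key move is to promote the ACFA picture to the valued difference picture so that Theorem~\ref{thm-main1} becomes applicable. Using the Hahn construction of Remark~\ref{rm-saturatedModel} with the residue field taken to be $E$ itself, I form $M := E((t^\Gamma)) \models \widetilde{\omega VFA}$ for any non-trivial divisible $\omega$-increasing ordered $\Zz[\sigma]$-module $\Gamma$. Then $F$ sits inside $M$ as a trivially valued difference subfield via the constant-series embedding, $\res$ identifies $E$ with $\rf(M)$ as difference fields, and $x$ becomes an element of $(X \setminus X_s)(\rf(M))$.

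Theorem~\ref{thm-main1} then yields $\mathrm{top.}\dim(X(\mathcal{O}) \cap \res^{-1}(x)) = d$ in $M$. Since $X(\mathcal{O}) \cap \res^{-1}(x) \subseteq X(\vf(M))$, this forces $\mathrm{top.}\dim(X(\vf(M))) \geq d$. Because $X$ is defined by difference polynomials over $F$, the set $X(\vf(M))$ is definable in the pure difference ring language with parameters in $F$, and Lemma~\ref{topdim=trfdim} upgrades the inequality to $\dim_{\ACFA}(X(\vf(M))) \geq d$.

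It remains to transfer this bound back to $E$. Here I use that $\vf(M) \models \ACFA$ (by Fact~\ref{omegaVFA}(4), $\widetilde{\omega VFA} \supseteq \ACFA$), and that $E \subseteq \vf(M)$ is an inclusion of ACFA-models, so by the model completeness of ACFA (Fact~\ref{fact:ACFA}(1)) one has $E \preccurlyeq \vf(M)$. The uniform definability of $\dim_{\ACFA}$ across completions of ACFA then gives $\dim_{\ACFA}(X(E)) = \dim_{\ACFA}(X(\vf(M))) \geq d$, finishing the proof. The step that requires the most care is this last transfer---one must verify that ACFA-dimension really is invariant under elementary extension of the parameter set via the definability remark following Fact~\ref{fact:ACFA}---but given the material assembled in Section~\ref{sec:prelim}, this is essentially bookkeeping rather than a genuinely new obstacle.
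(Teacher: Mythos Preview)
Your proof is correct and follows essentially the same route as the paper's: pass to a model of $\widetilde{\omega VFA}$ whose residue field realizes the given point, invoke Theorem~\ref{thm-main1} to get topological dimension $d$, translate via Lemma~\ref{topdim=trfdim}, and descend along an elementary inclusion of ACFA-models. The only cosmetic difference is that the paper first replaces $E$ by a sufficiently saturated elementary extension $\tilde{E}$ before building $M$, whereas you use $E$ directly as the residue field in the Hahn construction; since Theorem~\ref{thm-main1} is stated for arbitrary $M\models\widetilde{\omega VFA}$ and the Hahn field is already such a model by Remark~\ref{rm-saturatedModel}, your shortcut is legitimate.
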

\begin{proof}
Let $\tilde{E}$ be a sufficiently saturated elementary extension of $E$.
Let $M\models\widetilde{\omega VFA}$ such that 
$\rf(M)=\tilde{E}$ (see Remark~\ref{rm-saturatedModel}). 
Fix a difference field lift of $\tilde{E}$ into $M$. (Such a lift exists by Lemma~\ref{lemma-lift}.) Then $F\subseteq\tilde{E}\preceq M\models \mathrm{ACFA}$. If $X(\tilde{E})\neq X_s(\tilde{E})$, then $X_s(\tilde{E})\subsetneq X(\tilde{E})$, hence there is $x\in (X\setminus X_s)(\tilde{E})$. By Theorem~\ref{thm-main1}, we have $\mathrm{top.}\dim(X(\mathcal{O})\cap \res^{-1}(x))=d$ in $M$. In particular $\mathrm{top.}\dim(X(M))\geq d$. Since the topological dimension and the transformal dimension coincide for difference varieties defined in $M$ (by Lemma~\ref{topdim=trfdim}), $\dim_{\mathrm{ACFA}}(X(M))\geq d$. By definition, we also have $\dim_{\mathrm{ACFA}}(X(M))\leq \tdim_F(X)=d$. Hence, $\dim_{\mathrm{ACFA}}(X(M))= d$. As $F\subseteq E\preceq \tilde{E}\preceq M$ in $\mathcal{L}_\sigma$, we get $\dim_{\mathrm{ACFA}}(X(E))=d$ as well. 

\end{proof}

\begin{definition}
Let $D$ be a finitely generated difference domain, and let $X\subseteq \Aa^n$ be a difference variety defined by $(P_i)_{i\leq N}$ over $D$. Given a homomorphism $f:D\to (K,\sigma)$ into a difference field $K$, we denote by $X^f$ the difference variety over $K$ defined by the polynomials $(f(P_i))_{i\leq N}$.
\end{definition}
\begin{remark}
    Note that if $Q_1,\ldots,Q_\ell$ is another set of generators of $\Sigma_D:=I(X)\cap D$ in $D[x_1,\ldots,x_n]_\sigma$, then $f(Q_1),\ldots,f(Q_\ell)$ and $(f(P_i))_{i\leq N}$ generate the same perfect difference ideal in $K[x_1,\ldots,x_n]_\sigma$. Hence the definition of $X^f$ does not depend on the choice of defining polynomials of $X$ in $D[x_1,\ldots,x_n]_\sigma$.
\end{remark}

\begin{remark}
Let $X$ be a difference variety defined over a difference domain $D$ and $f:D\to (K,\sigma)$ be a homomorphism. Then both $\tdim_K(X^f)>\tdim_D(X)$ and $\tdim_K(X^f)<\tdim_D(X)$ may occur, as the following examples show.

\begin{enumerate}
\item Let $D$ be generated over $\mathbb{Z}$ by $a$ with no relations. Let $X\subseteq\Aa^3$ be defined by $(x-1)(ay-1)=0$ and $(z-1)(ay-1)=0$. Then $\tdim_D(X)=2$. Let $f:D\to \mathbb{Q}$ map $a$ to $0$. Then $\tdim_{\mathbb{Q}}(X^f)=1$.
\item Let $f:D\to \mathbb{Q}$ be as above and $Y\subseteq \Aa^3$ be defined by $(\sigma(y)-a)(y-z)=0$ and $(\sigma(y)-y)(y-x)=0$. Then $\tdim_D(Y)=1$, but $Y^f$ contains a component $\Aa^1\times\{0\}\times\Aa^1$, hence is of transformal dimension 2 over $\mathbb{Q}$.
\end{enumerate}
\end{remark}

The next lemma shows that if we invert finitely many elements in $D$, then we can guarantee that the transformal dimension of $X^f$ does not go up.
\begin{lemma}\label{lem:local}
 For a difference domain $D$ with fraction field $F$, for any difference variety $X$ over $F$ of transformal dimension $d$, there is a difference domain $D'$ with $D\subseteq D'\subseteq F$ and $D'$ finitely generated over $D$, such that $\tdim_K(X^f)\leq \tdim_D(X)$ for all homomorphisms $f: D'\to (K,\sigma)$.
\end{lemma}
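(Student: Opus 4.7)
The plan is to witness $\tdim_F(X)\leq d$ concretely by exhibiting, for each of the $\binom{n}{d+1}$ coordinate projections $\pi\colon\Aa^n\to\Aa^{d+1}$, a nonzero difference polynomial over $F$ vanishing on $\pi(X)$, and then inverting a coefficient of each such polynomial inside $F$ so that these witnesses survive every specialization. Let $\Pi$ denote the set of such projections; since $|\Pi|$ is finite, the enlarged ring $D'$ will remain finitely generated over $D$.

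For each $\pi\in\Pi$ and each irreducible component $X_k$ of $X$ over $F$, we have $\tdim_F(X_k)\leq d$, so the Zariski--Cohn closure of $\pi(X_k)$ is a proper subvariety of $\Aa^{d+1}$ over $F$, cut out by some nonzero $Q_{\pi,k}\in F[y_1,\ldots,y_{d+1}]_\sigma$. Taking $Q_\pi:=\prod_k Q_{\pi,k}$ (nonzero, because the difference polynomial ring over $F$ is a domain) yields a nonzero polynomial vanishing on $\pi(X)$. After possibly enlarging $D$ to a finitely generated extension $D_0\subseteq F$ over which $X$ is defined in the sense of Definition~\ref{Def:overD} (using Remark~\ref{Rm-Z-Noetherian} to ensure $D_0$ is Ritt, so that $I(X)\cap D_0[x_1,\ldots,x_n]_\sigma$ is finitely generated as a perfect difference ideal by some $(P_i)_{i\leq N}$), we clear denominators to arrange $Q_\pi\in D_0[y_1,\ldots,y_{d+1}]_\sigma$. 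Pick a nonzero coefficient $a_\pi\in D_0\setminus\{0\}$ of $Q_\pi$, and set $D':=D_0[a_\pi^{-1}:\pi\in\Pi]$, which is finitely generated over $D$ and contained in $F$.

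For any homomorphism $f\colon D'\to(K,\sigma)$: since $f(a_\pi)\in K^\times$, the specialization $f(Q_\pi)$ remains a nonzero difference polynomial over $K$. On the other hand, $Q_\pi(\pi(\bar x))$ lies in the perfect difference ideal generated in $D_0[x_1,\ldots,x_n]_\sigma$ by the defining polynomials $(P_i)$. Since each step in the recursive construction of a perfect difference ideal (sums, products, multiplication by ring elements, and the ``perfect'' closure step) commutes with the difference ring homomorphism $f$, the image $f(Q_\pi)(\pi(\bar x))$ lies in the perfect difference ideal generated by $(f(P_i))$, which by definition is $I(X^f)$. Hence $\pi(X^f)$ is contained in a proper subvariety of $\Aa^{d+1}_K$ for every $\pi\in\Pi$, yielding $\tdim_K(X^f)\leq d=\tdim_D(X)$. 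The main piece of care needed is verifying the stability of perfect difference ideal membership under $f$; once that is secured, the dimension bound is immediate and uniform across all projections.
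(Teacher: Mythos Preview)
Your argument is correct in substance and follows the same underlying idea as the paper: for each coordinate projection $\pi:\Aa^n\to\Aa^{d+1}$, exhibit a nonzero difference polynomial $Q_\pi$ vanishing on $\pi(X)$, and then arrange that this witness survives every specialization $f$. The paper, however, packages this via model-theoretic compactness rather than by hand: it observes that $\qftp_{\mathcal{L}_\sigma}(F)\cup\mathrm{ACFA}$ entails the first-order statement $\bigwedge_\pi\bigl(\pi(X)\subseteq Y_\pi\wedge Y_\pi\neq\Aa^{d+1}\bigr)$, extracts a finite $\Delta\subseteq\qftp_{\mathcal{L}_\sigma}(F)$ by compactness, and then notes that after inverting the finitely many elements $\Delta$ declares nonzero, the positive quantifier-free type of a finite tuple $A\subseteq F$ already implies $\Delta$; any homomorphism $f:D[A]_\sigma\to K$ preserves positive atomic formulas, so $X^f$ inherits the bound. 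Your route is more elementary in that it avoids ACFA and compactness entirely, replacing them by the concrete fact that perfect difference ideal membership is preserved under difference ring homomorphisms (since the preimage of a perfect difference ideal is perfect). The paper's compactness argument, on the other hand, sidesteps any bookkeeping about perfect ideals and works uniformly without having to locate the witnesses explicitly.

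One point in your write-up needs repair. You invoke Remark~\ref{Rm-Z-Noetherian} to conclude that the enlarged ring $D_0$ is Ritt, but that remark only says a \emph{Noetherian} ring is Ritt, and a finitely generated difference extension $D_0=D[a]_\sigma$ is essentially never Noetherian. If $D$ itself is Ritt (e.g.\ finitely generated over $\Zz$, as in every application of the lemma in the paper), then Ritt--Raudenbush (Fact~\ref{RittRaud}) gives that $D_0$ is Ritt, and your argument goes through. For general $D$ you can bypass the Ritt property altogether: the membership $Q_\pi(\pi(\bar x))\in\{P_1,\ldots,P_N\}_F$ is witnessed by finitely many ring operations and perfectness steps, each involving only finitely many elements of $F$, so it already holds in $\{P_1,\ldots,P_N\}_{D_0}$ for some $D_0$ finitely generated over $D$ (equivalently, $\{P_1,\ldots,P_N\}_F$ is the directed union of the $\{P_1,\ldots,P_N\}_{D_0}$). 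With this adjustment your proof is complete.
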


\begin{proof}
    Let $\Pi_{d+1}$ be the collection of projections $\Aa^n\to\Aa^{d+1}$. Since $X\subseteq \Aa^n$ has transformal dimension $d$, there are non-trivial difference polynomials $\{P_{\pi}(x_1,\ldots,x_{d+1}):\pi\in\Pi_{d+1}\}$ over $F$ such that if $F\leq E\models \mathrm{ACFA}$, then $\pi(X(E))\subseteq Y_{\pi}(E)$ where $Y_{\pi}$ is the difference variety in $\Aa^{d+1}$ defined by the equation $P_\pi(x_1,\ldots,x_{d+1})=0$. In other words, $\qftp_{\mathcal{L}_\sigma}(F)\cup\mathrm{ACFA}\models \bigvee_{\pi\in\Pi_{d+1}}(\pi(X)\subseteq Y_{\pi}\land Y_{\pi}\neq \Aa^{d+1})$. By compactness, some finite $\Delta\subseteq \qftp_{\mathcal{L}_\sigma}(F)$ together with $\mathrm{ACFA}$ are sufficient on the left hand side. Since $F$ is a difference field, by adding inverses to elements that are prescribed to be non-zero by $\Delta$, there is a finite subset $A\subseteq F$ such that the positive quantifier-free $\mathcal{L}_\sigma$-type of $A$ implies $\Delta$. 
    
    Let $D':=D[A]_\sigma$. If $f:D'\to K$ is a homomorphism to a difference field, then $f(A)\subseteq K$ satisfies $\Delta$. Hence $X^f$ has transformal dimension at most $d$.
\end{proof}

The next theorem is a uniform version of Theorem~\ref{thm-main1}.

\begin{theorem}\label{MainTheorem}
Let $X\subseteq\Aa^n$ be a difference variety of transformal dimension $d$ defined over a finitely generated difference domain $D=\mathbb{Z}[d_1,\ldots,d_m]_\sigma/I$. Suppose $\tdim_K(X^f)\leq d$ for every homomorphism $f$ from $D$ to a difference field $(K,\sigma)$.

Then there exists a quantifier-free $\mathcal{L}_\sigma$-formula $\varphi(x,y)$ with $x=(x_1,\ldots,x_n)$ and $y=(y_1,\ldots,y_m)$ such that for every homomorphism $f:D\to (K,\sigma)$, $\varphi(x,f(d_1),\ldots,f(d_m))$ defines a difference subvariety $X_s^{f}$ of $X^f$ over $K$, such that 
\begin{itemize}
    \item 
    $\tdim_K(X_s^{f})<d$, and
    \item 
    if $M\models \widetilde{\omega VFA}$ with $K\subseteq \mathcal{O}_M$, then for each $x\in (X^f\setminus X_s^f)(\rf(M))$, in $M$ we have \[\mathrm{top.}\dim(X^f(\mathcal{O}_M)\cap \res^{-1}(x))=d.\] 
\end{itemize}

\end{theorem}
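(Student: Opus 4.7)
The plan is to establish Theorem~\ref{MainTheorem} by lifting the construction of $X_s$ in Theorem~\ref{thm-main1} uniformly across all homomorphisms $f\colon D\to(K,\sigma)$ via a compactness argument. The key obstruction is that a difference polynomial $P(\bar{x},d)\in D[\bar{x}]_\sigma$ which is non-zero over $D$ may specialize to $0$ under $f$; I sidestep this by building the relevant partial type from the ``implication'' formulas
\[
\phi_P(\bar{x},y):=\bigl(P(\bar{x},y)\not\equiv 0\text{ as a polynomial in }\bar{x}\bigr)\to\bigl(P(\bar{x},y)\neq 0\bigr),
\]
whose antecedent is the quantifier-free condition on $y$ asserting that some coefficient of $P$, viewed as a polynomial in $\bar{x}$, does not vanish at $y$, making $\phi_P(\bar{x},y)$ itself quantifier-free.

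Let $\Pi$ denote the set of $\binom{n}{d}$ coordinate projections $\Aa^n\to\Aa^d$. Fix $\pi\in\Pi$ and consider the partial type
\[
q_\pi(\bar{x})=\{\bar{x}\in\rf^d\}\cup\{\phi_P(\bar{x},\res(c)):P\in\Zz[\bar{x},y]_\sigma,\ P(\bar{x},d)\neq 0\text{ in }D[\bar{x}]_\sigma\}
\]
in $\widetilde{\omega VFA}$ augmented with constants $c=(c_1,\dots,c_m)$ in $\vf$ which generate a trivially valued difference subring and satisfy the positive diagram of $D$. I claim that in any such model satisfying $\tdim(X^c)\leq d$, every realization $\bar{a}$ of $q_\pi$ must satisfy the formula $\psi_\pi(\bar{a})$ from the proof of Theorem~\ref{thm-main1}: any non-trivial polynomial over $F[\bar{x}]_\sigma$, where $F:=\Frac(\res(c(D)))\subseteq\rf$, pulls back to some $P\in\Zz[\bar{x},y]_\sigma$ with $P(\bar{x},d)\neq 0$ in $D$ and $P(\bar{x},\res(c))\not\equiv 0$, so $\phi_P$ forces its non-vanishing on $\bar{a}$; hence $\bar{a}$ is $\ACFA$-generic over $F$, and Theorem~\ref{thm-main1} applied inside the model yields $\psi_\pi(\bar{a})$. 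To extract a finite witness, I argue by contradiction: if no finite subset of $q_\pi$ suffices, form the ultraproduct of counterexamples along a suitable ultrafilter on the finite subsets of $q_\pi$; by \L o\'{s}'s theorem together with Lemma~\ref{ultratrd} applied in the residue field sort (a pure $\ACFA$-structure by Fact~\ref{omegaVFA}(3)), the resulting model still satisfies $\tdim(X^c)\leq d$, fully realizes $q_\pi$, yet violates $\psi_\pi$---contradicting the implication just established. This produces polynomials $P_{\pi,1},\dots,P_{\pi,N_\pi}\in\Zz[\bar{x},y]_\sigma$ with $P_{\pi,i}(\bar{x},d)\neq 0$ in $D[\bar{x}]_\sigma$ such that $\bigwedge_i\phi_{P_{\pi,i}}\models\psi_\pi$ holds uniformly across all models.

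I then set
\[
\varphi(x,y):=(x\in X)\wedge\bigwedge_{\pi\in\Pi}\bigvee_{i\leq N_\pi}\neg\phi_{P_{\pi,i}}(\pi(x),y),
\]
which is quantifier-free since $\neg\phi_{P_{\pi,i}}(\pi(x),y)$ unfolds to $(P_{\pi,i}(\bar{x},y)\not\equiv 0)\wedge(P_{\pi,i}(\pi(x),y)=0)$. For each $f\colon D\to(K,\sigma)$, $\varphi(x,f(d))$ cuts out a closed difference subvariety $X_s^f\subseteq X^f$. The implicit function property outside $X_s^f$ is immediate: any $z\in X^f\setminus X_s^f$ satisfies $\bigwedge_i\phi_{P_{\pi,i}}(\pi(z),f(d))$ for some $\pi$, giving $\psi_\pi(\pi(z))$ and hence the required topological-dimension statement in any extension to $\widetilde{\omega VFA}$. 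For $\tdim_K(X_s^f)<d$: if for some $\pi$ every $P_{\pi,i}(\bar{x},f(d))$ vanishes identically in $\bar{x}$, then the $\pi$-disjunction is the empty condition and $X_s^f=\varnothing$; otherwise, for every $\pi$ at least one $P_{\pi,i}(\bar{x},f(d))\not\equiv 0$, so $\pi(X_s^f)$ is contained in a finite union of proper Zariski-Cohn closed subvarieties of $\Aa^d$, yielding $\tdim_K(\pi(X_s^f))<d$ for every $\pi$, whence $\tdim_K(X_s^f)<d$ via the transformally-independent-coordinates argument at the end of the proof of Theorem~\ref{thm-main1}.

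The main obstacle lies in the precise formulation of $q_\pi$: it must be strong enough that any realization is $\ACFA$-generic over $F$ (so Theorem~\ref{thm-main1} applies) yet weak enough that every formula in it is a quantifier-free statement about the tuple $(c,\bar{x})$ over $\Zz$. The device $\phi_P$ achieves both goals: it is trivially satisfied precisely on those specializations where $P(\bar{x},d)$ collapses under $f$, so the uniform formula $\varphi$ degenerates harmlessly to $X_s^f=\varnothing$ in that case, while in the non-degenerate range---which is exactly where the implicit function conclusion of Theorem~\ref{thm-main1} is genuinely needed---the compactness output produces the correct $X_s^f$.
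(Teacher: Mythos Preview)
Your argument is correct and takes a genuinely different route from the paper's. The paper proceeds by Noetherian induction on the Ritt ring $D$: it first constructs $X_s$ over $F=\Frac(D)$ via Theorem~\ref{thm-main1}, then uses a compactness argument together with Lemma~\ref{lem:local} to produce a localisation $D[1/d']$ on which both bullet points hold, and finally recurses on $D/\mathfrak{p}$ for the finitely many minimal reflexive primes $\mathfrak{p}\ni d'$. The single uniform formula $\varphi(x,y)$ is thus assembled piecewise along a finite stratification of the difference spectrum of $D$.

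Your approach eliminates the induction entirely by means of the implication device $\phi_P$. Because $\phi_P(\bar{x},y)$ is vacuously true exactly on those specialisations where $P(\bar{x},y)$ degenerates, the finite set $\{P_{\pi,i}\}$ extracted by compactness automatically behaves correctly on every stratum at once, and the dimension bound $\tdim_K(X_s^f)<d$ follows directly from the shape of $\varphi$ via the projection argument, bypassing Lemma~\ref{lem:local}. This is cleaner and more in the spirit of ``definability of types uniformly in parameters''. Two minor remarks: first, your restriction to polynomials with $P(\bar{x},d)\neq 0$ in $D$ is harmless but also unnecessary, since if $P(\bar{x},d)=0$ in $D$ then the positive diagram forces $P(\bar{x},\res(c))\equiv 0$ and $\phi_P$ is again vacuous; second, the appeal to Lemma~\ref{ultratrd} in your ultraproduct step can be dropped, since any model of your theory $T$ already interprets $c$ via some homomorphism $D\to K$, and the standing hypothesis of the theorem then gives $\tdim_K(X^c)\leq d$ outright---so plain first-order compactness suffices.
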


\begin{proof}
Since $D$ is finitely generated, it is a Ritt ring. By Noetherian induction, it suffices to show that there is $d'\in D\setminus\{0\}$ and a difference subvariety $X_s$ of $X$ defined over $D$ such that the conclusion of the theorem holds for all homomorphisms $f:D\to (K,\sigma)$ with $f(d')\neq 0$. Indeed, for the induction, one considers $X^f$ for $f:D\to \Frac(D/\mathfrak{p})$ where $\mathfrak{p}$ is a minimal reflexive prime difference ideal containing $d'$. Note that $X^f$ is a difference variety defined over $D/\mathfrak{p}=\mathbb{Z}[d_1,\ldots,d_m]/I'$ of transformal dimension $\leq d$ by assumption.

Let $X_s$ be as given in Theorem~\ref{thm-main1}. Then $X_s$ is defined over the fraction difference field $F$, hence is also defined over $D$ since $D$ is Ritt.

By Lemma~\ref{lem:local}, there is a finitely generated difference domain $D''\supseteq D$ contained in $F$, such that $\tdim_K((X_s)^f)\leq \tdim_D(X_s)$ for all homomorphisms $f:D''\to K$.

Let $\Theta_F$ be $\widetilde{\omega VFA}\cup\qftp_{\mathcal{L}_\sigma}(F)\cup\{v(a)\geq0,a\in F\}$. By Theorem~\ref{thm-main1}, 
\begin{align*}
    \Theta_F\models & \forall x\in (X\setminus X_s)(\rf),  \\
    &\pi(X(\mathcal{O})\cap \res^{-1}(x)) \text{ contains an open ball for some $\pi:\Aa^{n}\to\Aa^d$.}
\end{align*}

By compactness, a finite subset $\Psi\subseteq \Theta_F$ implies the above. Hence, there is a finite set $A'\subseteq F$, closed under inverse, such that the positive $\mathcal{L}_\sigma$-quantifier-free type of $A'$ and that $A'$ has non-negative valuation in a model of $\widetilde{\omega VFA}$ imply $\Psi=\Psi(A')$. Let $D'$ be the difference domain generated by $A'$ over $D''$. If $f:D'\to K$ is a homomorphism and $K$ is embedded into the valuation ring of some model of $\widetilde{\omega VFA}$, so is $f(A')$. Since $f$ is a homomorphism, $f(A')$ satisfies the positive $\mathcal{L}_\sigma$-quantifier-free type of $A'$, thus $\Psi(f(A'))$ is satisfied.

Finally note that since $D'\subseteq F$ is finitely generated and contains $D$, $D'\subseteq D[1/d']\sigma$ for some $d'\in D\setminus\{0\}$.
\end{proof}

\section{Equidimensionality of Frobenius reductions}\label{sec:equi_dim}
Recall that we denote by $K_q$ the difference field $(\mathbb{F}_p^{alg},\frob_q)$.
In this paper, when we refer to algebraic varieties, we mean algebraic sets defined by some set of polynomials in affine space, which are neither required to be irreducible nor reduced. Since our primary interest is their rational points, this causes no harm.
\begin{definition}
Let $X$ be a difference variety over $K_q$. Define $M_q(X)$ to be the algebraic variety over $\mathbb{F}_p^{alg}$ defined by the algebraic polynomials obtained by replacing any term $\sigma(x)$ by $x^q$ in the defining difference polynomials of $X$.

Let $X$ be a difference variety over some finitely generated difference domain $D$. Given a homomorphism $\eta:D\to K_q$ of difference domains, we call the algebraic variety $M_q(X^\eta)$ \emph{a Frobenius reduction} of $X$.
\end{definition}

\begin{example}

For any $q=p^n$, the mod $p$ map is the only morphism of difference rings $\eta:\mathbb{Z}\to K_q$. 
    
On the other hand, let $D=\mathbb{Z}[i]$, then more interesting Frobenius reductions appear. For example, one can define a map $\eta: D\to \Ff_3^{alg}$ by fixing $\xi$ a square root of $-1$ in $\Ff_3^{alg}$ and define $\eta(i)=\xi$, and we have automorphisms swapping the two square roots of $-1$ (in $D$ and $\Ff_3^{alg}$).

\end{example}

\begin{example}\label{eg:dim-fail}
    If $X$ has transformal dimension $d>0$ over $D$, it is not the case that there exists $D'\subseteq \mathrm{Frac}(D)$ such that $M_q(X^\eta)$ has algebraic dimension $d$ for all homomorphisms $\eta:D'\to K_q$, it is even not the case for $q$ sufficiently large. For example, let $X$ be defined by $P_1(x_1,x_2):=\sigma(x_1)-x_1$ and $P_2(x_1,x_2):=x_1^2+x_1+1$ over $\mathbb{F}_2$. Namely, $x_1\neq 1$ is a cube root of unity and is in the fixed field. Since $3\mid (2^{2n}-1)$ and $3\nmid (2^{2n+1}-1)$ for all $n$, we see that one can find such $x_1$ in $(\mathbb{F}_2^{alg},\frob_{2^{2n}})$ but they don't exist in $(\mathbb{F}_2^{alg},\frob_{2^{2n+1}})$. Hence, $M_{2^{2n}}(X)$ has dimension 1 while $M_{2^{2n+1}}(X)$ has dimension $-\infty$ for all $n$.
\end{example}

In this section, we will show that the dimension of Frobenius reductions of $X$ can be controlled modulo $X_s$ in the following sense: If $X$ has transformal dimension $d>0$, the Frobenius reduction $M_q((X\setminus X_s)^\eta)$ is either empty or breaks into irreducible subvarieties of algebraic dimension $d$ for all $q>N_X$ for some constant $N_X$. Here $X_s$ is given as in Theorem~\ref{MainTheorem}. Intuitively, $X_s$ is the set of points where an analogue of the implicit function theorem fails, which could be treated as a ``singular locus" to some extent. The content of the results in this section is that away from $X_s$, the Frobenius reductions preserve necessarily geometric information.

Moreover, we will prove a uniform version of this result, which says that if $(X_b)_b$ is a definable family of difference varieties of transformal dimension $d$ over $D_b$, then $(X_b)_s$ also varies in a definable family and there is a common bound for $N_{X_b}$. To do this, we need to introduce the notion of complexity of a difference variety.

\begin{definition}
    Let $X$ be a difference variety given by difference polynomials $(P_i)_{i\leq m}$ with coefficients in a difference domain $D$. We define the complexity of $X$ as $\sum_{i\leq m}|P_i|$, where $|P_i|$ is the length of $P_i$ treated as an $\mathcal{L}_{\sigma,D}$-formula where $\mathcal{L}_{\sigma,D}$ is the language of difference rings augmented with constant symbols in $D$, namely $\mathcal{L}_{\sigma,D}:=\{+,-,0,1,\sigma,(d)_{d\in D}\}$. Note that if $\eta:D\to E$ is a morphism of difference domains, then $X^\eta$ has complexity bounded by the complexity of $X$.
\end{definition}

\begin{remark}
    \begin{itemize}
        \item 
        The complexity of $X$ depends on the choice of defining polynomials. From now on all difference varieties come with a fixed choice of defining polynomials. 
        \item 
        Let $E$ be a difference field. Given any $N,n>0$, the family of difference varieties of $\Aa^n_E$ defined over $E$ of complexity bounded by $N$ is a definable family in $E$, since the number and the length of defining difference polynomials are all bounded by $N$. Moreover, it is trivial but worth pointing out that every variety has finite complexity.
    \end{itemize}
\end{remark}

 Note that in what follows, we only require the fact that $\dim(M_q(X))\leq \tdim_{K_q}(X)$ for $q$ sufficiently large from the following lemma. This inequality allows for a more elementary proof.

\begin{lemma}\label{L:Trasnf-AlgDim}
    For all $e>0$ there is a constant $B(e)$ such that for all $q=p^n>B(e)$, whenever $X$ is a difference variety of complexity $\leq e$ defined over $K_q$, then $\dim(M_q(X))=\tdim_{K_q}(X)$.
\end{lemma}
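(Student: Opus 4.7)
I focus on the inequality $\dim(M_q(X))\leq\tdim_{K_q}(X)$ for $q$ sufficiently large, which is the only direction the authors say they need in the sequel; the reverse inequality will follow \emph{a posteriori} from the counting in Theorem~B. The strategy is, for each coordinate projection $\pi:\Aa^n\to\Aa^{\tdim(X)+1}$, to find a non-zero difference polynomial $Q_\pi$ of complexity bounded in terms of $e$ only with $Q_\pi\circ\pi\in I(X)$, and then to show that for $q$ past a threshold depending only on $e$, $M_q(Q_\pi)$ is still non-zero.

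\emph{Step 1 (bounded-complexity witness in ACFA).} Combining the uniform definability of $\dim_{\ACFA}$ across completions of ACFA (remark after Fact~\ref{fact:ACFA}) with Fact~\ref{fact:ACFA}(4), a compactness argument yields, for each $e$ and $0\leq d\leq e$, a constant $B'(e,d)$ with the following property: in every model of ACFA, whenever $X$ has complexity $\leq e$ and $\tdim(X)\leq d$, every coordinate projection $\pi$ onto $\Aa^{d+1}$ admits a non-zero $Q_\pi$ of complexity $\leq B'(e,d)$ with $Q_\pi\circ\pi\in I(X)$. Since ACFA is the asymptotic theory of the $K_q$ (Fact~\ref{fact:ACFA}(6)), the same holds over $K_q$ for all $q>B_0(e)$.

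\emph{Step 2 (Frobenius substitution preserves non-triviality).} Write a non-zero $Q\in K_q[\bar x]_\sigma$ of complexity $\leq B'$ as $Q=\sum_\alpha c_\alpha\prod_{i,j}(\sigma^j x_i)^{\alpha_{i,j}}$, with all exponents $\alpha_{i,j}\leq B'$. Then
\[M_q(Q)=\sum_\alpha c_\alpha\prod_i x_i^{\sum_j q^j\alpha_{i,j}}.\]
For $q>B'$, the map $(\alpha_{i,j})_j\mapsto\sum_j q^j\alpha_{i,j}$ is the base-$q$ expansion of an integer, hence injective, so distinct monomials of $Q$ specialise to distinct monomials of $M_q(Q)$; in particular $M_q(Q)\neq0$.

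\emph{Step 3 (conclusion and main obstacle).} Set $B(e):=\max\{B_0(e),\,B'(e,d):0\leq d\leq e\}$, which is finite because complexity $\leq e$ bounds both the ambient dimension and the $\sigma$-depth. Fix $q>B(e)$ and $X$ over $K_q$ of complexity $\leq e$, and set $d:=\tdim_{K_q}(X)$. Assume for contradiction that $\dim(M_q(X))\geq d+1$: then some coordinate projection $\pi$ onto $\Aa^{d+1}$ makes $\pi(M_q(X))$ Zariski dense. Step~1 supplies a non-zero $Q_\pi$ of complexity $\leq B'(e,d)$ with $Q_\pi\circ\pi\in I(X)$, hence $M_q(Q_\pi)$ vanishes on $\pi(M_q(X))(K_q)$; Step~2 gives $M_q(Q_\pi)\neq0$, so $\pi(M_q(X))$ lies in the proper subvariety $V(M_q(Q_\pi))$ of $\Aa^{d+1}$, contradicting Zariski density. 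The main obstacle is Step~1: extracting a syntactically short witness polynomial uniformly in $X$ rests on the combination of definability of $\dim_{\ACFA}$ with quantifier simplification. Without such a complexity bound, ``Frobenius-trivial'' polynomials such as $\sigma(x)-x^q$, whose complexity is comparable to $q$, would render Step~2's base-$q$ combinatorics useless.
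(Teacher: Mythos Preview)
Your argument for the inequality $\dim(M_q(X))\leq\tdim_{K_q}(X)$ is correct and genuinely different from the paper's. The paper argues by contradiction via an ultraproduct of the valued difference fields $\mathcal{L}_q$: it passes to a model of $\widetilde{\omega VFA}$, invokes Lemma~\ref{topdim=trfdim} to identify transformal dimension with topological dimension there, and then uses that in each $\mathcal{L}_{q_i}$ the topological dimension of $X_{b_i}$ equals the algebraic dimension of $M_{q_i}(X_{b_i})$ (an ACVF fact). This yields both inequalities at once but relies on the non-Archimedean machinery. Your route is exactly the ``more elementary proof'' the paper alludes to just above the lemma: compactness in ACFA produces, uniformly in the complexity, a non-zero witness polynomial $Q_\pi$ on each $(d{+}1)$-dimensional projection, and the base-$q$ combinatorics in Step~2 ensures $M_q(Q_\pi)\neq0$ once $q$ exceeds the bound on the exponents. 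What your approach buys is that it stays entirely inside ACFA; what the paper's approach buys is the equality in one stroke.

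A few small points. Your deferral of $\geq$ to Theorem~B is not circular, since (as the paper notes) only $\leq$ is used downstream; but in fact $\geq$ is immediate and you could include it: over $K_q$, which is algebraically closed, $\tdim_{K_q}(X)=\dim_{\mathrm{ACFA}}(X(K_q))$ by Fact~\ref{fact:ACFA}(3) and Fact~\ref{lem:ACFA_comp_dim}, so some coordinate projection of $X(K_q)=M_q(X)(K_q)$ to $\Aa^d$ avoids every proper difference (hence every proper algebraic) subvariety, giving $\dim(M_q(X))\geq d$. Your invocation of Fact~\ref{fact:ACFA}(4) in Step~1 is unnecessary; the uniform definability of $\dim_{\mathrm{ACFA}}$ already makes the compactness go through. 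Finally, ``complexity $\leq e$ bounds the ambient dimension'' is not literally true as stated in the paper's definition of complexity, but this is harmless: if only $m$ variables occur in the defining polynomials then $X\cong X'\times\Aa^{n-m}$ and both sides of the claimed equality shift by $n-m$, so one reduces to ambient dimension bounded by $e$.
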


    \begin{proof}
Recall that $\mathcal{L}_q$ is $(\Ff_q(t)^{alg},\frob_q)$ with valuation $\val_t$ the $t$-adic valuation of $\Ff_q(t)$ extended to the algebraic closure. Note that $\mathcal{L}_q$ is an elementary extension of $K_q$ as difference fields. It thus suffices to prove the result for $\mathcal{L}_q$ instead of $K_q$. Assume for contradiction that for any $i\in \mathbb{N}$ there is $q_i>B$ and a difference variety $X_{b_i}$ of complexity $\leq e$ defined over $\mathcal{L}_{q_i}$ such that $\tdim_{\mathcal{L}_{q_i}}(X_{b_i})=d$ and $\dim(M_{q_i}(X_{b_i}))\neq d$.

By Lemma~\ref{ultratrd}, letting 
$L:=\prod_{i\rightarrow\mathcal{U}}\mathcal{L}_{q_i}$ and $X=\prod_{i\to\mathcal{U}}X_{b_i}\subseteq \Aa^n_{(L,\sigma)}$, we get 
$\tdim_L(X)=d$. Assume $\mathcal{U}$ is non-principal. 
By Fact~\ref{omegaVFA}, $L\models\widetilde{\omega VFA}$, so Lemma~\ref{topdim=trfdim} yields 
$\mathrm{top.}\dim(X)=d$. On the other hand, in the local models 
$\mathcal{L}_{q_i}$ we get $\mathrm{top.}\dim(X_{b_i}(\mathcal{L}_{q_i}))=\mathrm{top.}\dim(M_{q_i}(X_{b_i}))=\dim(M_{q_i}(X_{b_i}))\neq d$, which is a contradiction.
\end{proof}

In the proof of Theorem~\ref{thm-equiDim} below, we will use the following fact which is presumably well-known. We omit the proof of this fact, which is elementary.

\begin{fact}\label{F:Lift}
Let $K$ be a valued field, $E\subseteq K$ a lift of the residue field $k$ of $K$, and let $Y$ be an algebraic variety defined over $E\cong k$. Then $\res(Y(\mathcal{O}_K))\subseteq Y(k)$.
\end{fact}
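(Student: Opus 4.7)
The plan is to write down the defining polynomials explicitly and then simply apply the residue map coefficient-wise.

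First I would fix a presentation: let $Y \subseteq \mathbb{A}^n$ be defined by polynomials $f_1, \ldots, f_m \in E[x_1, \ldots, x_n]$. Since $E \subseteq K$ is by hypothesis a lift of the residue field $k$, the restriction $\res\!\upharpoonright_E : E \to k$ is a ring isomorphism. Denote by $\bar f_i \in k[x_1, \ldots, x_n]$ the polynomial obtained by applying $\res\!\upharpoonright_E$ coefficient-wise to $f_i$; under the identification $E \cong k$ these are precisely the defining polynomials of $Y$ viewed as a variety over $k$, so $Y(k) = \{\,b \in k^n : \bar f_i(b) = 0 \text{ for all } i\,\}$.

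Next I would take an arbitrary $a = (a_1, \ldots, a_n) \in Y(\mathcal{O}_K)$. Since the coefficients of each $f_i$ lie in $E \subseteq \mathcal{O}_K$ and the coordinates $a_j$ are in $\mathcal{O}_K$, the evaluation $f_i(a)$ is an element of $\mathcal{O}_K$; moreover, it equals $0$ by definition of $a \in Y(\mathcal{O}_K)$. Applying the residue map $\res : \mathcal{O}_K \to k$, which is a ring homomorphism that restricts to the identification $E \cong k$ on the coefficients, we obtain
\[
0 = \res\bigl(f_i(a)\bigr) = \bar f_i\bigl(\res(a_1), \ldots, \res(a_n)\bigr)
\]
for every $i$. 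Hence $\res(a) \in Y(k)$, which is what we wanted.

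The argument is essentially just the observation that evaluation of polynomials commutes with any ring homomorphism applied to both coefficients and inputs; there is no real obstacle, and indeed the authors dismiss it as elementary. The only point worth flagging is that one must use the lift hypothesis to ensure that $\res$ acts on the coefficients of $f_i$ as the fixed isomorphism $E \cong k$ used to identify $Y$ as a variety over $k$ in the first place; without a specified lift, the statement $\res(Y(\mathcal{O}_K)) \subseteq Y(k)$ would not even be well-posed.
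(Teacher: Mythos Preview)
Your argument is correct and is exactly the elementary verification the paper has in mind; the paper in fact omits the proof entirely, stating only that it is elementary. Your write-up, including the remark on why the lift hypothesis is needed for the statement to make sense, is appropriate and complete.
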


\begin{theorem}\label{thm-equiDim}
    For all $e>0$, there is $C=C(e)>0$, such that the following holds.

Let $X\subseteq\Aa^n$ be a difference variety defined over some finitely generated difference domain $D$ of complexity $\leq e$ with $\tdim_D(X)=d>0$. Then there is a difference subvariety $X_s$ over $D$ of complexity $\leq C$ with $\tdim_D(X_s)<d$ and a finitely generated difference domain $D'$ with $D\subseteq D'\subseteq \Frac(D)$ and such that for all $q>C$ and for all Frobenius reductions $\eta:D'\to K_q$, $M_q((X_s)^\eta)$ has algebraic dimension strictly smaller than $d$ and $M_q((X\setminus X_s)^\eta)$ is either empty or equidimensional of algebraic dimension $d$, namely, every irreducible component is of dimension $d$. 

\end{theorem}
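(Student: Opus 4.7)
My plan is to combine the uniform implicit function theorem of Theorem~\ref{MainTheorem} with the dimension comparison of Lemma~\ref{L:Trasnf-AlgDim} via an ultraproduct argument. First I would invoke Theorem~\ref{MainTheorem}, applied uniformly to the definable family of difference varieties of complexity $\leq e$, to produce a quantifier-free formula $\varphi$ that defines $X_s$, together with a bound $C_0=C_0(e)$ on its complexity; by construction $\tdim_D(X_s)<d$. Applying Lemma~\ref{lem:local} to $X$ and $X_s$ produces a finitely generated $D'\subseteq \Frac(D)$ such that $\tdim_{K_q}(X^\eta)\leq d$ and $\tdim_{K_q}((X_s)^\eta)<d$ for all $\eta:D'\to K_q$. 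Lemma~\ref{L:Trasnf-AlgDim} then gives $C_1=\max(B(e),B(C_0))$ such that for all $q>C_1$,
\[\dim M_q((X_s)^\eta)\leq \tdim_{K_q}((X_s)^\eta)<d,\qquad \dim M_q(X^\eta)\leq d,\]
handling the statement about $X_s$ and the upper-bound half of equidimensionality.

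The main obstacle is the lower bound: every irreducible component of $M_q(X^\eta)$ not contained in $M_q((X_s)^\eta)$ must have dimension exactly $d$, uniformly in $q$ and $\eta$. I would argue by contradiction along a sequence $q_i\to \infty$. Suppose we have parameters $b_i$ describing difference varieties $X_i$ of complexity $\leq e$, homomorphisms $\eta_i:D_i'\to K_{q_i}$, and irreducible components $\tilde Y_i$ of $M_{q_i}(X_i^{\eta_i})$ not contained in $M_{q_i}((X_i)_s^{\eta_i})$ with $\dim \tilde Y_i<d$. Since $\Ff_{p_i}^{\text{alg}}$ is algebraically closed, pick $x_i\in \tilde Y_i(\Ff_{p_i}^{\text{alg}})$ outside both $M_{q_i}((X_i)_s^{\eta_i})$ and every other irreducible component of $M_{q_i}(X_i^{\eta_i})$; viewing $x_i$ as a difference solution gives $x_i\in (X_i^{\eta_i}\setminus (X_i)_s^{\eta_i})(K_{q_i})$.

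Embedding $K_{q_i}\subseteq \mathcal{L}_{q_i}$ as the trivially valued residue difference field and taking an ultraproduct along a nonprincipal ultrafilter $\mathcal{U}$ on $Q$, Fact~\ref{omegaVFA}(4) yields $L:=\prod_{i\to\mathcal{U}}\mathcal{L}_{q_i}\models\widetilde{\omega VFA}$ with residue field $\rf(L)\models\ACFA$ and a residue point $x:=(x_i)_{i\to\mathcal{U}}$. Because $\varphi$ is uniform in the parameters $b_i$, the limit difference variety $X_\infty$ and its subvariety $(X_\infty)_s$ satisfy $x\in (X_\infty\setminus (X_\infty)_s)(\rf(L))$. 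Applying Theorem~\ref{MainTheorem} inside $L$ gives $\mathrm{top.}\dim(X_\infty(\mathcal{O}_L)\cap \res^{-1}(x))=d$, witnessed by a coordinate projection whose image contains a valuation-open ball; this is a definable property in $\mathcal{L}_{\rf,\vg,\sigma}$, so by \L{}o\'{s}'s theorem, for $\mathcal{U}$-almost all $i$ we obtain $\mathrm{top.}\dim(X_i^{\eta_i}(\mathcal{O}_{\mathcal{L}_{q_i}})\cap\res^{-1}(x_i))=d$ in $\mathcal{L}_{q_i}$. Since the difference structure on $\mathcal{L}_{q_i}$ is literally $\frob_{q_i}$, one has $X_i^{\eta_i}(\mathcal{L}_{q_i})=M_{q_i}(X_i^{\eta_i})(\mathcal{L}_{q_i})$, so the same topological-dimension statement holds for the algebraic variety $M_{q_i}(X_i^{\eta_i})$. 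The Zariski closure of a valuation-open neighborhood of $x_i$ in this algebraic set equals the union of its irreducible components through $x_i$, which by our genericity choice is exactly $\tilde Y_i$; this forces $\dim \tilde Y_i\geq d$, contradicting $\dim \tilde Y_i<d$.
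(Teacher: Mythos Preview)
Your overall architecture is the same as the paper's---combine Theorem~\ref{MainTheorem} with Lemma~\ref{L:Trasnf-AlgDim} and run an ultraproduct contradiction through $\prod_{i\to\mathcal{U}}\mathcal{L}_{q_i}\models\widetilde{\omega VFA}$---but there is a genuine gap in how you obtain uniformity in $e$.

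You write that you ``invoke Theorem~\ref{MainTheorem}, applied uniformly to the definable family of difference varieties of complexity $\leq e$, to produce a quantifier-free formula $\varphi$ \ldots\ together with a bound $C_0=C_0(e)$ on its complexity.'' But Theorem~\ref{MainTheorem} is stated for a \emph{fixed} finitely generated domain $D=\mathbb{Z}[d_1,\ldots,d_m]_\sigma/I$ and a fixed $X$: it produces a formula $\varphi(x,y)$ whose shape depends on $D$ and on the Noetherian induction carried out inside that particular $D$. Nothing in its statement bounds the complexity of $\varphi$ in terms of $e$ alone, and the proof (which inducts over the prime spectrum of $D$) gives no such bound either. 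Your contradiction argument then uses this uniformity essentially: you need the ultraproduct $\prod_{i\to\mathcal{U}}(X_i)_s$ to be a difference variety defined by the single formula $\varphi$, and you need the conclusion of Theorem~\ref{MainTheorem} to hold for this particular $(X_\infty)_s$ inside $L$. Without the uniform $\varphi$, neither step is available.

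The paper closes this gap by reversing the order: it first assumes the theorem fails and collects counterexamples $(X_i,D_i)$ for all $i$, then takes the ultraproduct $D:=\prod_{i\to\mathcal{U}}D_i$, passes to a finitely generated $D_0\subseteq D$ over which the limit $X$ is defined, and applies Theorem~\ref{MainTheorem} \emph{once} to this single $(X,D_0)$. The resulting $X_s$ has some finite complexity $C$, and writing $X_s=\prod_{i\to\mathcal{U}}(X_i)_s$ gives the $(X_i)_s$'s (all of complexity $\leq C$) together with $D_i'$ by unwinding $D'=D_0[b]_\sigma$ coordinatewise. Only then does one choose the bad $\eta_i$'s and take the second ultraproduct into $\prod\mathcal{L}_{q_i}$. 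This single application of Theorem~\ref{MainTheorem} to the ultraproduct is precisely what manufactures the uniform bound that you assumed at the outset.

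A smaller point: your last step (``the Zariski closure of a valuation-open neighborhood of $x_i$ \ldots\ equals the union of its irreducible components through $x_i$'') is not quite the right statement. What is needed is that $X_i^{\eta_i}(\mathcal{O}_i)\cap\res^{-1}(x_i)\subseteq \tilde Y_i(\mathcal{O}_i)$, and the paper obtains this from Fact~\ref{F:Lift}: since every irreducible component $Z$ of $M_{q_i}(X_i^{\eta_i})$ is defined over the residue field, any $y\in Z(\mathcal{O}_i)$ has $\res(y)\in Z$, so $\res(y)=x_i$ forces $Z=\tilde Y_i$ by the choice of $x_i$.
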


\begin{remark}
In particular, suppose $K=(K,\sigma)$ is a difference field and $\{X_a,a\in A\subseteq K^m\}$ is a family of difference varieties of transformal dimension $d$ defined by a fixed set of difference polynomials with varying parameters $a\in A$, then $\{(X_a)_s:a\in A\}$ is also uniformly definable by another fixed set of difference polynomials with each $(X_a)_s$ defined by parameters in the difference field generated by $a$. 
\end{remark}

\begin{proof}
  Assume to the contrary, there is some natural number $e$ such that for any $i>0$, there is a difference variety $X_i$ of transformal dimension $d$ of complexity $\leq e$ defined over some finitely generated $D_i\subseteq F_i:=\Frac(D_i)$, with no $(X_i)_s\subseteq X_i$ of complexity $\leq i$ defined over $D_i$ and $D_i\subseteq D'_i\subseteq F_i$ with $D_i'$ finitely generated satisfying $M_q((X_i\setminus (X_i)_s)^\eta)$ is equidimensional of algebraic dimension $d$ and $M_q(((X_i)_s)^\eta)$ has algebraic dimension $<d$ for every Frobenius reduction $\eta:D_i'\to K_q$ with $q>i$.

    Let $\mathcal{U}$ be a non-principal ultrafilter and define $D:=\prod_{i\to\mathcal{U}}D_i$ and $F:=\prod_{i\to\mathcal{U}}F_i$. Since the complexity of each $X_i$ is bounded by $e$, $(X_i)_i$ is a uniformly definable family of difference varieties of complexity $\leq e$. Therefore, $X:=\prod_{i\to\mathcal{U}}X_i$ also has complexity $\leq e$. Now, by Lemma \ref{ultratrd}, $X$ has transformal dimension $d$ over $F$. Since $\Frac(D)=F$, $X$ has transformal dimension $d$ over $D$. Let $D_0\subseteq D$ be a finitely generated difference subdomain such that $X$ is defined over $D_0$ and $\tdim_{D_0}(X)=d$. By Lemma~\ref{lem:local}, replacing $D_0$ with $D_0=D_0[b_0]_\sigma$ for some finite $b_0\in \Frac(D_0)$, we may assume that $\tdim_L(X^f)\leq d$ for every homomorphism $f$ from $D_0$ to a difference field $(L,\sigma)$. Let $X_s$ and $D'=D_0[b]_\sigma\subseteq \Frac(D_0)$ be given as in Theorem~\ref{MainTheorem}, with $b$ a finite tuple. Write $X_s=\prod_{i\to\mathcal{U}}(X_i)_s$, then $(X_i)_s$ is defined over $D_i$ of transformal dimension $<d$ for ultrafilter-many $i$. Write $b=(b_i)_{i\to\mathcal{U}}$ with $b_i\in F_i$. Let $D'_i:=D_i[b_i]_\sigma$. Take $C$ to be the complexity of $X_s$. By assumption, for all $i>C$, there is $\eta_i:D'_i\to K_{q_i}$ with $q_i>i$ such that either $M_{q_i}((X_i\setminus (X_i)_s)^{\eta_i})$ is not equidimensional of algebraic dimension $d$ or $M_{q_i}(((X_i)_s)^{\eta_i})$ has algebraic dimension $\geq d$. We get a homomorphism $\eta:\prod_{i\to\mathcal{U}}D'_i\to \prod_{i\to\mathcal{U}}K_{q_i}=:E$ which restricts to a homomorphism $\eta: D'\to E$. By Theorem~\ref{MainTheorem}, $\tdim_E(X^\eta)\leq d$ and $\tdim_E((X_s)^\eta)\leq \tdim_{D'}(X_s)<d$. 
 Let $B\subseteq E$ be the fraction field of the difference domain generated by $\eta(D')$.
    
    Note that $X^\eta=\prod_{i\to\mathcal{U}}(X_i)^{\eta_i}$ and $(X_s)^\eta=\prod_{i\to\mathcal{U}}((X_i)_s)^{\eta_i}$.
    
    By Lemma~\ref{L:Trasnf-AlgDim} it then follows that for ultrafilter-many $i$, the algebraic dimensions of $M_{q_i}((X_i)^{\eta_i})$ and $M_{q_i}(((X_i)_s)^{\eta_i})$ are bounded above by $d$ and $d-1$ respectively. We conclude that $M_{q_i}((X_i\setminus (X_i)_s)^{\eta_i})$ is not equidimensional for ultrafilter-many $i$.

Therefore, we may assume some irreducible component $S_i$ of $M_{q_i}((X_i\setminus (X_i)_s)^{\eta_i})$ has algebraic dimension $<d$ and the algebraic dimension of $M_{q_i}(((X_i)_s)^{\eta_i})$ is $<d$ for all $i\in\mathbb{N}$. Take $x_i\in S_i(\mathbb{F}_{p_i}^{alg})$ that is not in any other irreducible components of $M_{q_i}((X_i\setminus (X_i)_s)^{\eta_i})$ and $x=(x_i)_{i\to\mathcal{U}}$. 

Setting $M:=\prod_{i\to\mathcal{U}}M_i:=\prod_{i\to\mathcal{U}}((\Ff_{p_i}(t)^{alg},\Ff_{p_i}^{alg},\frob_{q_i})=\prod_{i\to\mathcal{U}}\mathcal{L}_{q_i}$, Fact~\ref{omegaVFA}(4) yields $M\models \widetilde{\omega VFA}$. Then $E=\rf(M)$ is the residue difference field of $M$ and $B$ embeds into $K:=\prod_{i\to\mathcal{U}}\Ff_{p_i}(t)^{alg}$ as a trivially valued difference subfield. 
Now $x\in (X\setminus X_s)^\eta(E)$. By Theorem~\ref{MainTheorem}, for some $\pi:\Aa^n\to \Aa^d$, \[\pi(X^\eta(\mathcal{O})\cap \res^{-1}(x))\text{ contains a non-empty valuation open ball in }K^d.\] Hence for ultrafilter-many $i\in\mathcal{U}$, $\pi((X_i)^{\eta_i}(\mathcal{O}_i)\cap \res^{-1}(x_i))$ contains a non-empty valuation open ball in $(K_i)^d$, where $K_i:=\Ff_{p_i}(t)^{alg}$, since containing a non-empty valuation open ball is a definable condition. By assumption, $x_i\in M_{q_i}((X_i\setminus (X_i)_s)^{\eta_i})$ lies in an irreducible component $S_i=Y_i\setminus M_{q_i}(((X_i)_s)^{\eta_i})$ of dimension $<d$, where $Y_i$ denotes the Zariski closure of $S_i$ in $M_{q_i}((X_i)^{\eta_i})$, which is easily seen to be an irreducible component of $M_{q_i}((X_i)^{\eta_i})$, of algebraic dimension $<d$. 
We claim $(X_i)^{\eta_i}(\mathcal{O}_i)\cap \res^{-1}(x_i)\subseteq Y_i(\mathcal{O}_i)$, since if $y\in (X_i)^{\eta_i}(\mathcal{O}_i)$ with $\res(y)=x_i$ and $y$ is in some other irreducible component $Z\neq Y_i$ of $M_{q_i}((X_i)^{\eta_i})$, then $x_i=\res(y)\in Z$ by Fact~\ref{F:Lift}, contradicting our choice of $x_i$.
Now we have both the facts that $Y_i$ has algebraic dimension $<d$ and that $\pi((X_i)^{\eta_i}(\mathcal{O}_i)\cap \res^{-1}(x_i))\subseteq \pi(Y_i(\mathcal{O}_i))$ contains a non-empty valuation open ball in $(K_i)^d$,  a contradiction to the fact that algebraic dimension and topological dimension agree for definable sets in any model of ACVF.
\end{proof}

\begin{definition}
\begin{itemize}
    \item Let $X$ be a difference variety over $D$ with $\tdim_D(X)=d$ and let $U$ be a Zariski-Cohn open subset of $X$. We say $U$ is \emph{Frobenius equidimensional} if for all Frobenius reduction $\eta: D\to K_q$, $M_q(U^\eta)$ is either empty or equidimensional of algebraic dimension $d$.
    \item Given a difference variety $X$ defined over $D$, we call $X_s$ and $D'$ in Theorem~\ref{thm-equiDim} \emph{the special subvariety of $X$} and \emph{the special domain of reduction of $X$} respectively. 
\end{itemize}
\end{definition}
\begin{remark}\label{rem: nb irreducible components}
    Theorem~\ref{thm-equiDim} tells us that $M_q((X\setminus X_s)^\eta)$ is equidimensional for all Frobenius reductions $\eta$. However, it does not indicate how many irreducible components $M_q((X\setminus X_s)^\eta)$ can have. Note that the degree of $M_q((X\setminus X_s)^\eta)$ is bounded by $c'q^c$ for some constants $c$ and $c'$, thus the number of irreducible components is also bounded by it. 

    In general the number of irreducible components of the Frobenius reduction $M_q(V^\eta)$ of a difference variety $V$ depends on $q$. However, we speculate that when the generic of $V$ is orthogonal to varieties of transformal dimension 0, we have a better bound.
    \begin{question}
    Let $M\models \mbox{ACFA}$ and $B$ be an algebraically closed difference subfield and $a\in M^n$. Suppose there is no $b\in\acl(aB)\setminus B$ with $\trfdeg(b/B)=0$. Is it true that there is a locally closed difference variety $V$ with $a\in V(M)$ defined over some difference subdomain $D\subseteq B$  and a constant $C$, such that $M_q(V^\eta)$ has at most $C$ irreducible components for all Frobenius reduction $\eta:D\to K_q$?
    \end{question}
\end{remark}

\section{Twisting reduction and twist-rational morphisms}\label{sec:twist_red}

This section gives the final preparation for the proof of our main theorem. The basic setting is: suppose $X\subseteq\Aa^n_D$ is a difference variety with $\tdim_D(X)=d>0$ and $\eta:D\to K_q$ be a homomorphism and we want to estimate the size of the set of rational points of $M_q(X^\eta)$ in some finite difference field $\mathbb{F}_{p^t}$. By the previous sections, we know that $M_q((X\setminus X_s)^\eta)$ is either empty or breaks into absolutely irreducible components $(Y_j)_j$ of dimension $d$. If we could show that $Y_j$ contains a smooth point in $\mathbb{F}_{p^t}$, then a standard argument shows that $Y_j$ is defined over $\mathbb{F}_{p^t}$. See Example~\ref{eg:dim-fail}. Then by the Lang-Weil estimate, we could conclude that $Y_j$ contains approximately $p^{td}$-many points. The aim of this section is to divide $X$ into pieces so that each piece is already ``smooth'' in terms of Jacobian criterion. 

More precisely, the main result of this section is that we could cut $X$ into finitely-many locally closed subvarieties $X_i$ such that each $X_i$ is homeomorphic to some ``nice'' locally closed subvariety $Y_i$. Here ``nice'' means that $Y_i\subseteq\Aa^{d_i+1}$ is of transformal dimension $d_i$ and $Y_i$ is smooth in the sense that there is a difference polynomial $Q(x_1,\ldots,x_{d_i+1})$ such that $Y_i$ vanishes along $Q$ but $\frac{\partial Q}{\partial x_1}$ is non-zero everywhere on $Y_i$, where partial derivative is defined as in Section~\ref{sec:pre1}.

However, there are some difficulties in formulating the clean statement above mathematically. Suppose $X\subseteq \Aa^2_K$ is defined over some inversive difference field $K=(K,\sigma)$ by $P(x_1,x_2):=a\sigma(x_1)-b\sigma(x_2)$. Then the partial derivatives of $P$ always vanish. However, $P$ generates the same perfect difference ideal as $(\sigma^{-1}P)(x_1,x_2):=\sigma^{-1}(a)x_1-\sigma^{-1}(b)x_2$. Particularly, they define the same zero set. 

There is yet another difficulty. If $\mathrm{char}(K)=p>0$ and $X$ is defined by $Q(x_1,x_2):=x_1^p\sigma(x_2)$. Then all the partial derivatives of $Q$ vanish as well. The way to fix this is to define an automorphism $\tau$ on $K$ such that $\sigma=\tau\cdot\frob_p$ and write $Q$ as a $\tau$-difference polynomial $Q_\tau(x_1,x_2):=x_1^p\tau(x_2^p)$, and then change $Q_\tau(x_1,x_2)$ to $(\frob_p^{-1}P_\tau)(x_1,x_2):=x_1\tau(x_2)$. Notationally, we will also use $a\mapsto a^{1/p}$ to denote $\frob_p^{-1}$. Now if we view $X$ as a $(K,\tau)$ difference variety defined by $(\frob_p^{-1}P_\tau)(x_1,x_2)$, then generically it does not vanish on partial derivatives. This procedure is called the \emph{twisting reduction}. The algebro-geometric analogue of this is the relative Frobenius construction, see~\cite[Section 1.2]{ez}. In conclusion, it only makes sense to work with perfect inversive difference fields $(K,\sigma)$, and by homeomorphisms, we could mean homeomorphisms of the Zariski-Cohn topology where we replace the automorphism $\sigma$ with some element in the subgroup of $\mathrm{Aut}(K)$ of the form $\sigma\circ \frob^\ell$ for some $\ell\in\Zz$.

We will first introduce the twisting reduction of difference polynomials and then establish a generic homeomorphism if two irreducible difference varieties have the same function fields up to taking perfect inversive closure. We then use the primitive element theorem to embed generically an irreducible difference variety of dimension $d$ to $\Aa^{d+1}$. Finally, we combine all these steps to prove the main result which in addition works uniformly.

\begin{definition}\label{Def-Twist}
Let $(K,\sigma)$ be a perfect inversive difference field of characteristic $p\geq 0$ and $P(x_1,\ldots,x_n)\in K[x_1,\ldots,x_n]_{\sigma}$ be a non-constant difference polynomial. 

\begin{enumerate}
\item  Let $\ell\in\Zz$, the $\ell$'th twist of $\sigma$ is the automorphism $\tau$ of $K$ such that $\tau\circ(\frob_p)^\ell=\sigma$.
\item An \emph{$\ell$'th twisting reduction} of $P$ is a difference polynomial $Q\in K[x_1,\ldots,x_n]_{\tau}$, where $\tau$ is an $\ell$'th twist of $\sigma$ for some $\ell$, obtained by the following two steps:
\begin{itemize}
    \item
    Let $m\geq 0$ be a natural number such that $P\in K[x_1^{\sigma^m},x_2^{\sigma^m},\ldots,x_n^{\sigma^m}]_{\sigma}$ (we set $x^{\sigma^0}:=x$) and let $\tilde{P}(x_1,\ldots,x_n):=\sigma^{-m}(P)$. Then $\tilde{P}\in K[x_1,\ldots,x_n]_{\sigma}$ by our choice of $m$ and as $K$ is inversive.
    \item
    Write $\tilde{P}:=\sum_{j\leq J}a_j\prod_{1\leq i\leq n}x_i^{\mu_{i,j}}$ where $a_j\in K\setminus\{0\}$ and $\mu_{i,j}\in\Nn[\sigma]$ such that if $j\neq j'$, then $(\mu_{1,j},\ldots,\mu_{n,j})\neq (\mu_{1,j'},\ldots,\mu_{n,j'})$. Let $S:=\{s\in\Nn^{>0}:\text{ exists }\mu_{i,j}=s+\sum_{1\leq t\leq t_{i,j}}n_t\sigma^{t}\}.$  Now let $\ell\geq 0$ be a natural number such that $p^\ell$ divides all numbers in $S$ (if $p=0$, set $\ell=0$). We define a morphism of semi-rings $f:\Nn[\sigma]\to\Nn[\tau]$ by setting $f(\sigma):=p^\ell\tau$. Clearly, $f$ is an injective map. Define $\tilde{Q}(x_1,\ldots,x_n)\in K[x_1,\ldots,x_n]_{\tau}$ where $\tau$ is the $\ell$'s twist of $\sigma$ as $\tilde{Q}:=\sum_{j\leq J}a_j\prod_{1\leq i\leq n}x_i^{f(\mu_{i,j})}$. Since $p^\ell$ divides all the elements in $S$, necessarily $p^\ell$ divides all coefficients appearing in $f(\mu_{i,j})\in\Nn[\tau]$ for all $i,j$. Now we set 
    \[Q(x_1,\ldots,x_n):=\sum_{j\leq J}a_j^{1/p^\ell}\prod_{1\leq i\leq n}x_i^{f(\mu_{i,j})/p^\ell}\in K[x_1,\ldots,x_n]_{\tau}.\]
   \end{itemize} 
   \item We say that a twisting reduction $Q$ of $P$ is \emph{full} if in the first step $m$ is chosen maximal such that $P\in K[x_1^{\sigma^m},x_2^{\sigma^m},\ldots,x_n^{\sigma^m}]_{\sigma}$, and in the second step $\ell$ is chosen maximal such that $p^\ell$ divides all the elements in $S$. (Note that by maximality of $m$, necessarily $S\neq \emptyset$, and so there is indeed a maximal such $\ell$.)
   \end{enumerate}
   \end{definition}

\begin{lemma}\label{lem:twist-reduction}
If $(K,\sigma)$ is a perfect inversive difference field and $P\in K[x_1,\ldots,x_n]_{\sigma}$ is non-constant, the full twisting reduction $Q$ of $P$ is well-defined, and $Q$ has a non-zero partial derivative $\frac{\partial Q}{\partial x_{i_0}}$ for some $1\leq i_0\leq n$.
    \end{lemma}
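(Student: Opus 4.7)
The plan is to verify the two assertions in turn: first, that under the \emph{full} choice the integers $m$ and $\ell$ both exist, making $Q$ a well-defined element of $K[x_1,\ldots,x_n]_\tau$; and second, to exhibit an index $i_0$ with $\partial Q/\partial x_{i_0}\neq 0$.

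For well-definedness, the key observation is that, $P$ being non-constant, it mentions only finitely many variables $\sigma^j(x_i)$, and among those a minimum exponent $m_0\geq 0$ occurs; taking $m:=m_0$ makes $m$ maximal with $P\in K[x_1^{\sigma^m},\ldots,x_n^{\sigma^m}]_\sigma$. By construction $\tilde P=\sigma^{-m}(P)$ involves at least one variable of the form $x_i^{\sigma^0}=x_i$, so, writing $\tilde P=\sum_j a_j\prod_i x_i^{\mu_{i,j}}$, at least one exponent $\mu_{i,j}$ has a strictly positive constant term in $\Nn[\sigma]$; hence $S\neq\emptyset$. In positive characteristic $p$ the set $S$ is a finite nonempty subset of $\Nn^{>0}$, so $\ell:=\min_{s\in S} v_p(s)$ is the maximum integer with $p^\ell\mid s$ for every $s\in S$; in characteristic zero we set $\ell=0$. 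One checks directly that $p^\ell$ divides every coefficient of $f(\mu_{i,j})\in\Nn[\tau]$: the constant coefficient is either $0$ or lies in $S$, while for $t\geq 1$ the $\tau^t$-coefficient equals $n_t\, p^{t\ell}$. Using perfection and inversivity of $K$, the $p^\ell$-th roots $a_j^{1/p^\ell}$ exist in $K$, so $Q$ is a genuine element of $K[x_1,\ldots,x_n]_\tau$.

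For the partial derivative, pick $s_0\in S$ with $v_p(s_0)=\ell$ (possible by maximality of $\ell$), and indices $i_0, j_0$ so that $\mu_{i_0,j_0}$ has constant term $s_0$. The exponent of $x_{i_0}$ in the $j_0$-th summand of $Q$ is $f(\mu_{i_0,j_0})/p^\ell$, whose constant term is $s_0/p^\ell$, an integer coprime to $p$ and hence nonzero in $K$. Differentiation therefore yields a contribution with coefficient $a_{j_0}^{1/p^\ell}\cdot (s_0/p^\ell)\neq 0$. The main step is then to see that no cancellation can occur across different summands: since the tuples $(\mu_{1,j},\ldots,\mu_{n,j})$ are pairwise distinct by construction, since $f$ is an injective semiring homomorphism $\Nn[\sigma]\hookrightarrow\Nn[\tau]$, since division by $p^\ell$ is injective on its image, and since subtracting $1$ from the constant term of the $i_0$-th coordinate is injective on tuples with positive $i_0$-th constant term, the monomials produced by $\partial/\partial x_{i_0}$ from distinct $j$'s remain pairwise distinct in $K[x_1,\ldots,x_n]_\tau$. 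Thus the $j_0$-th term survives and $\partial Q/\partial x_{i_0}\neq 0$. The only step requiring genuine care is this non-cancellation argument; the rest is bookkeeping together with basic use of the fact that $K$ is perfect and inversive.
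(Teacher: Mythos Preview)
Your argument is correct and follows essentially the same strategy as the paper. You give a more explicit treatment of well-definedness (existence of maximal $m$ and $\ell$), which the paper simply declares ``clear''; for the derivative, both you and the paper locate a monomial whose $i_0$-th exponent has constant term coprime to $p$ and then verify non-cancellation---you do this monomial-by-monomial via injectivity of ``subtract $1$'', while the paper first rewrites $Q$ as a univariate polynomial in $x_{i_0}$ and reads off the nonzero coefficient of $x_{i_0}^{n_0-1}$.
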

    
    \begin{proof}
    Existence and uniqueness of the full twisting reduction $Q\in K[x_1,\ldots,x_n]_{\tau}$ are clear. Now suppose $Q=\sum_{j\leq J}a_j\prod_{1\leq i\leq n}x_i^{\mu_{i,j}}$ with $a_j\in K\setminus\{0\}$ and $\mu_{i,j}\in\Nn[\tau]$ such that if $j\neq j'$, then $(\mu_{1,j},\ldots,\mu_{n,j})\neq (\mu_{1,j'},\ldots,\mu_{n,j'})$. By construction of $Q$, there is some $(i_0,j_0)$ such that $\mu_{i_0,j_0}=n_t\tau^t+\cdots+n_1\tau+n_0$ with $n_0>0$ and not divisible by $p$. Now treat $Q$ as an algebraic polynomial in $x_{i_0}$ with coefficients in $K[x_1,...,x_{i_0-1},\tau(x_{i_0}),x_{i_0+1},...,x_n]_\tau$. So $Q(x_1,...,x_n)=c_0+c_1x_{i_0}+...+c_{n_0}x^{n_0}_{i_0}+...+c_mx^m_{i_0}$ where $\frac{\partial c_j}{\partial x_{i_0}}=0$ for each $j$ (as $c_j\in K[x_1,...,\tau(x_{i_0}),...,x_n]_\tau$) and $c_{n_0}$ is non-trivial (since $\mu_{i_0,j_0}$ has constant term $n_0$ and $(\mu_{1,j},\ldots,\mu_{n,j})\neq (\mu_{1,j'},\ldots,\mu_{n,j'})$ for $j\neq j'$, hence cancellation is not possible). It then follows that $\frac{\partial Q}{\partial x_{i_0}}$ is non-trivial as it contains the non-trivial term $n_0c_{n_0}x^{n_0-1}_{i_0}$.
    \end{proof}

   \begin{remark}
   Suppose $(K,\sigma)\leq (L,\sigma)$ are perfect and inversive and $P\in K[x_1,\ldots,x_n]_{\sigma}$ is non-constant. Let $Q\in K[x_1,\ldots,x_n]_{\tau}$ be a twisting reduction of $P$ for some twist $\tau$ of $\sigma$. Then $P$ and $Q$ have the same zero sets in $L$.
   \end{remark}

In the following, we will give an analogue of generic universal homeomorphism between irreducible varieties, which in particular induce homeomorphisms in the Zariski-Cohn topology over perfect inversive difference fields extensions generically.

Recall that the \emph{perfect inversive closure} of a difference field $L$ is by definition the smallest perfect inversive difference field 
containing $L$. Also recall that  $K[x_1,\ldots,x_n]^{\pm}_\sigma$ denotes the perfect inversive closure of the difference polynomial ring $K[x_1,\ldots,x_n]_\sigma$.

\begin{definition}
Let $X$ and $Y$ be two irreducible difference varieties over a perfect inversive difference field $K$. We say $X$ and $Y$ are \emph{twist-birational} over $K$ if the perfect inversive closures of their function fields $K(X)_\sigma$ and $K(Y)_\sigma$ are isomorphic over $K$. 
\end{definition}

\begin{definition}
 Let $(K,\sigma)$ be a difference field.
A function of the form $(P_1/Q_1, \ldots,P_m/Q_m)$ is called a \emph{twist-rational function over $K$} if $P_i,Q_i\in K[x_1,\ldots,x_n]^{\pm}_\sigma$ such that $Q_i\neq0$ for all $i$. It is said to be \emph{defined} at $a$ if $Q_i(a)\neq0$ for all $i$.   
\end{definition}

\begin{remark}
Suppose $L=K(a_1,\ldots,a_n)_\sigma$ and $F=K(b_1,\ldots,b_m)_\sigma$ are two finitely generated difference fields over a perfect inversive difference subfield $K$. Denote by $\tilde{L}$ and $\tilde{F}$ the perfect inversive closures of $L$ and $F$ respectively. Suppose further that $h:\tilde{L}\cong\tilde{F}$ is a difference field isomorphism over $K$. Then there are twist-rational functions $(P_1/Q_1, \ldots,P_m/Q_m)$ defined at $\bar{a}:=(a_1,\ldots,a_n)$ and $(R_1/S_1, \ldots,R_n/S_n)$ defined at $\bar{b}=(b_1,\ldots,b_m)$ such that $h(a_j)=R_j(\overline{b})/S_j(\overline{b})$ and $h^{-1}(b_i)=P_i(\overline{a})/Q_i(\overline{a})$ for all $i,j$.
\end{remark}

The following is a general topological fact we need. We leave the elementary proof for the reader.

\begin{fact}\label{fact-homeo}
Let $X,Y$ be irreducible topological spaces. Suppose $f:X\dashrightarrow Y$ and $g:Y\dashrightarrow X$ are continuous functions defined on a non-empty open set $X'\subseteq X$ and a non-empty open $Y'\subseteq Y$ respectively. Suppose there are non-empty open sets $X_0\subseteq X$ and $Y_0\subseteq Y$ such that $g\circ f=\mathrm{id}$ on $X_0$ and $f\circ g=\mathrm{id}$ on $Y_0$. Then $f$ and $g$ are homeomorphisms between the non-empty open sets $X_0':=f^{-1}(Y_0)\cap X_0$ and  $Y_0':=g^{-1}(X_0)\cap Y_0$.
\end{fact}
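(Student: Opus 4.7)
The plan is to exploit irreducibility of $X$ and $Y$ at exactly one essential point---the nonemptiness of the candidate domains $X_0'$ and $Y_0'$---and to reduce everything else to a purely formal manipulation of the partial composition identities. Note that the hypotheses implicitly give $X_0 \subseteq X'$, $f(X_0) \subseteq Y'$, and symmetrically for $Y_0$, since otherwise the compositions $g \circ f$ and $f \circ g$ would not be defined on $X_0$ and $Y_0$ respectively.

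First I would observe that for $x \in X_0$, the identity $g(f(x)) = x \in X_0$ forces $f(x) \in Y' \cap g^{-1}(X_0)$. In particular $g^{-1}(X_0)$ is a nonempty open subset of $Y$. Irreducibility of $Y$ then implies $g^{-1}(X_0) \cap Y_0 \neq \emptyset$, and for any $y$ in this intersection the point $x := g(y)$ lies in $X_0$ and satisfies $f(x) = f(g(y)) = y \in Y_0$, hence $x \in f^{-1}(Y_0) \cap X_0 = X_0'$. The symmetric argument, using irreducibility of $X$, produces a point in $Y_0'$.

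Next I would check that $f$ restricts to a map $X_0' \to Y_0'$: for $x \in X_0'$, one has $f(x) \in Y_0$ by definition of $X_0'$, and $g(f(x)) = x \in X_0$ gives $f(x) \in g^{-1}(X_0)$, so $f(x) \in Y_0'$. Symmetrically $g$ maps $Y_0'$ into $X_0'$, and the identities $g \circ f = \mathrm{id}_{X_0'}$ and $f \circ g = \mathrm{id}_{Y_0'}$ are inherited directly from the hypothesis, so $f|_{X_0'}$ and $g|_{Y_0'}$ are mutually inverse bijections. Openness of $X_0'$ and $Y_0'$ is immediate from continuity of $f$ on $X'$ and $g$ on $Y'$, and continuity of the restrictions is inherited.

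The only place anything beyond set-theoretic bookkeeping enters is the appeal to irreducibility that forces two nonempty open subsets of $Y$ (respectively $X$) to intersect; this is the ``hard'' step, though it is a one-line consequence of the definition of irreducibility. Everything else is a straightforward verification.
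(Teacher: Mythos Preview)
The paper does not actually supply a proof of this fact; it states explicitly ``We leave the elementary proof for the reader.'' Your argument is correct and complete: you invoke irreducibility exactly once, to guarantee that the two nonempty open subsets $g^{-1}(X_0)$ and $Y_0$ of $Y$ (and symmetrically in $X$) intersect, and the remainder is clean set-theoretic bookkeeping with the partial composition identities. This is precisely the kind of elementary verification the authors have in mind, and your write-up would serve well as the omitted proof.
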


\begin{lemma}\label{lem-openhomeo}
Suppose $X\subseteq \Aa^n_{(K,\sigma)}$ and $Y\subseteq \Aa^m_{(K,\sigma)}$ are irreducible difference varieties defined over a perfect inversive difference field $K$. 
Suppose $X$ and $Y$ are twist-birational over $K$. Then there are non-empty open subvarieties $X_0\subseteq X$ and $Y_0\subseteq Y$ and twist-rational functions $f$ and $g$ defined over $K$ such that for all perfect inversive difference field extension $E$ of $K$, $f$ induces a bijection from $X_0(E)$ to $Y_0(E)$ with $g$ being the inverse. Indeed, they are homeomorphisms with respect to the Zariski-Cohn topology over $E$.
\end{lemma}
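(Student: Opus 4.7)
The plan is to extract the maps $f$ and $g$ from the hypothesized isomorphism $h$ of perfect inversive closures by reading off its action on generators, and then to transfer the required identities from a generic point to non-empty open subvarieties using the irreducibility of $X$ and $Y$.

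First I would fix a generic point $\bar a = (a_1,\ldots,a_n)$ of $X$ inside $\tilde L := \widetilde{K(\bar a)_\sigma}$ and a generic point $\bar b = (b_1,\ldots,b_m)$ of $Y$ inside $\tilde F := \widetilde{K(\bar b)_\sigma}$, and apply the hypothesis to obtain a $K$-isomorphism $h: \tilde L \to \tilde F$. The remark preceding Fact~\ref{fact-homeo} then yields twist-rational tuples over $K$, namely $f := (P_1/Q_1,\ldots,P_m/Q_m)$ with $P_i, Q_i \in K[\bar x]^\pm_\sigma$ and $g := (R_1/S_1,\ldots,R_n/S_n)$ with $R_j, S_j \in K[\bar y]^\pm_\sigma$, such that $h^{-1}(b_i) = P_i(\bar a)/Q_i(\bar a)$ and $h(a_j) = R_j(\bar b)/S_j(\bar b)$. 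Put $X' := \{\bar x \in X : Q_i(\bar x) \neq 0 \text{ for all } i\}$ and $Y' := \{\bar y \in Y : S_j(\bar y) \neq 0 \text{ for all } j\}$; each $Q_i$, after applying a sufficiently high $\sigma^N \circ \frob_p^M$, becomes a genuine element of $K[\bar x]_\sigma$ with the same zero set in any perfect inversive extension $E$, so $X'$ is a Zariski-Cohn open subvariety of $X$, non-empty as $\bar a \in X'$, and analogously for $Y'$.

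Next I would verify the required identities at $\bar a$ (symmetrically $\bar b$) and promote them to open subvarieties. Since $h^{-1}$ is an isomorphism of difference fields fixing $K$, the tuple $f(\bar a) = h^{-1}(\bar b) \in \tilde L^m$ satisfies every defining difference polynomial relation of $Y$, so $f(\bar a) \in Y(\tilde L)$. A direct computation using that $K$-coefficient evaluations commute with $h^{-1}$ gives $g(f(\bar a)) = (h^{-1}(R_j(\bar b)/S_j(\bar b)))_j = (h^{-1}(h(a_j)))_j = \bar a$, and symmetrically $f(g(\bar b)) = \bar b$. Now each of the finitely many assertions --- every defining polynomial of $Y$ evaluated at $f$ vanishes, and each coordinate of $g\circ f$ equals the corresponding input --- translates, after clearing denominators by powers of the $Q_i$ (nonzero on $X'$) and applying a high enough $\sigma^N \circ \frob_p^M$, into a $\sigma$-difference polynomial identity over $K$ that holds at $\bar a$; by the genericity of $\bar a$ this identity lies in $I(X)$, hence holds on all of $X$. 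Thus $f$ maps $X'$ into $Y$, and there is a non-empty open subvariety $X_1 \subseteq X'$ on which additionally $f(X_1) \subseteq Y'$ and $g \circ f = \mathrm{id}$; symmetrically for $Y$.

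For continuity of $f$ over an arbitrary perfect inversive extension $E \supseteq K$, a basic Zariski-Cohn closed subset of $Y$ has the form $V(G)$ for some $G \in E[\bar y]_\sigma$, and its preimage in $X'$ is cut out by the vanishing of $G(P_1/Q_1,\ldots,P_m/Q_m)$; after multiplying by a high power of $\prod_i Q_i$ and applying a suitable $\sigma^N \circ \frob_p^M$ this becomes a genuine $\sigma$-difference polynomial condition over $E$, so the preimage is closed in $X'$. The same argument handles $g$. Feeding the data $(X, Y, X', Y', X_1, Y_1, f, g)$ into Fact~\ref{fact-homeo} produces non-empty open subvarieties $X_0, Y_0$ (depending only on the $K$-data, hence uniform in $E$) on which $f$ and $g$ restrict to mutually inverse homeomorphisms. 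The main obstacle in carrying out this plan is the careful bookkeeping of twist-shifts: one must ensure that the finitely many conditions --- inclusion into $Y$, the two composition identities, and the continuity clearings --- can be simultaneously freed of negative $\sigma$- and $\frob_p$-powers over $K$, and that the exceptional difference subvariety thereby cut out of $X$ does not contain the generic $\bar a$, which is precisely what the preceding construction ensures.
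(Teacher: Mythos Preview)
Your proposal is correct and follows essentially the same strategy as the paper's proof: extract $f$ and $g$ from the isomorphism of perfect inversive closures, verify the composition identities at a generic point, propagate them to non-empty open sets via irreducibility, check continuity by pulling back closed sets and clearing twist-shifts, and conclude with Fact~\ref{fact-homeo}. The only cosmetic difference is that the paper passes to a sufficiently saturated ambient model of ACFA and works with a generic point realized there, whereas you work directly with the algebraic generic point in the function field; your version is arguably cleaner on this point and is also more explicit about the $\sigma^{N}\circ\frob_p^{M}$ bookkeeping, which the paper handles by a one-line remark.
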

\begin{proof}
Let $K[x_1,\ldots,x_n]_{\sigma}/I(X)$ and $K[y_1,\ldots,y_m]_\sigma/I(Y)$ be the coordinate rings of $X$ and $Y$ respectively.  Let $L=K(X)_\sigma$ and $F=K(Y)_\sigma$ be their function fields and $\tilde{L}$, $\tilde{F}$ be their perfect inversive closures. Set $\alpha_i:=x_i\mod I(X)$, $\beta_j:=y_j\mod I(Y)$ and $\bar{\alpha}=(\alpha_1,\ldots,\alpha_n)$, $\Bar{\beta}=(\beta_1,\ldots,\beta_m)$.
 We may assume $\tilde{L}=\tilde{F}$ and hence there are $P_i,Q_i\in K[x_1,\ldots,x_n]^{\pm}_\sigma$ for $1\leq i\leq m$ and $R_j,S_j\in K[y_1,\ldots,y_m]^{\pm}_\sigma$ for $1\leq j\leq n$ with $\beta_i=P_i(\overline{\alpha})/Q_i(\overline{\alpha})$ and $\alpha_j=R_j(\overline{\beta})/S_j(\overline{\beta})$ for all $1\leq i\leq m$ and $1\leq j\leq n$.

Suppose $E$ is a perfect inversive difference field extension of $K$. Let $\tilde{E}\models ACFA$ be a sufficiently saturated ambient difference field containing $E$. Define the partial map $f:X(\tilde{E})\to Y(\tilde{E})$ as follows. Given $(a_1,\ldots,a_n)\in X(\tilde{E})$, let $b_i:=(P_i/Q_i)(a_1,\ldots,a_n)$ for $1\leq i\leq m$ and set $f(a_1,\ldots,a_n)=(b_1,\ldots.b_m)$. Then $f$ is defined on the open subset $X'(\tilde{E})$  of $X(\tilde{E})$ given by $\bigwedge_{1\leq i\leq m}Q_i(x_1,\ldots,x_n)\neq 0$. And if $(a_1,\ldots,a_n)\in X'(\tilde{E})$, then $f(a_1,\ldots,a_n)\in Y(\tilde{E})$. Clearly for any perfect inversive subfield $L$ containing $K$, $f$ restricts to a partial map from $X(L)$ to $Y(L)$. Moreover, $f:X'(\tilde{E})\to Y(\tilde{E})$ is a continuous map with respect to the Zariski-Cohn topology on $X'(\tilde{E})$ and $Y(\tilde{E})$ over $K$, since any closed set in $Y(\tilde{E})$ is given by some set of equations $\bigwedge_{i\leq N}T_i(y_1,\ldots,y_m)=0$. It has preimage in $X'(\tilde{E})$ given by the equations $\bigwedge_{i\leq N}T_i(P_1/Q_1,\ldots,P_m/Q_m)(x_1,\ldots,x_n)=0$ (we may write $T_i(P_1/Q_1,\ldots,P_m/Q_m)(x_1,\ldots,x_n)=0$ as an equation $T(x_1,\ldots,x_n)=0$ by chasing the denominators and applying $\sigma$ and $\frob_p$ repeatedly, and it will have the same set of solutions in $X'(\tilde{E})$.) Similarly, we define the partial map $g:Y(\tilde{E})\to X(\tilde{E})$ as: given $(b_1,\ldots,b_m)\in Y(\tilde{E})\to X(\tilde{E})$, let $a_j:=(R_j/S_j)(b_1,\ldots,b_m)$ for $1\leq j\leq n$ and define $g(b_1,\ldots,b_m)=(a_1,\ldots,a_n)$. Then $g$ is defined on an open set $Y'(\tilde{E})$ and is continuous and restricts to a partial map from $Y(L)$ to $X(L)$ for any perfect inversive subfield $L$ containing $K$. 

Let $(a_1,\ldots,a_m)\in X(\tilde{E})$ be a generic point over $K$. \footnote{Depending on the choice of $\tilde{E}$, namely the completion of ACFA over $K$, there may not be such a generic point, or even no point at all.} Then $f(a_1,\ldots,a_m)$ is a generic point in $Y(\bar{E})$ over $K$ (as $K(a_1,\ldots,a_m)_\sigma=L=K(X)_\sigma$.) By definition, $g\circ f(a_1,\ldots,a_m)=(a_1,\ldots,a_m)$. As $g\circ f$ is defined on the open set $X_0':=X'\cap X''$ where $X''$ is given by $\bigwedge_{i\leq n} S_i(P_1/Q_1,\ldots,P_m/Q_m)(x_1,\ldots,x_n)\neq 0$. Therefore, $g\circ f$ is continuous on $X_0'$ and the set $\{x\in X_0:g\circ f(x)=x\}$ is a closed set in $X_0'$ containing the generic $(a_1,\ldots,a_n)$. By irreducibility of $X$, we conclude $g\circ f$ is the identity map on the open set $X_0'$. Similarly, $f\circ g$ is the identity map on a non-empty open set $Y_0'$. By Fact~\ref{fact-homeo} $f:X_0(\tilde{E})\to Y_0(\tilde{E})$ is a homeomorphism where $X_0(\tilde{E}), Y_0(\tilde{E})$ are the open sets defined by $X_0'(\tilde{E})\cap f^{-1}(Y_0'(\tilde{E}))$ and $Y_0'(\tilde{E})\cap f^{-1}(X_0'(\tilde{E}))$ respectively. Note that $X_0$ and $Y_0$ only depend on $f$ and $g$ and do not depend on $E$ or $\tilde{E}$. Now use the fact that $X_0(E)=X_0(\tilde{E})\cap E^n$ and $Y_0(E)=Y_0(\tilde{E})\cap E^m$ for a perfect inversive subfield $E$ containing $K$. It is easy to see that $f$ restricts to a homeomorphism from $X_0(E)$ to $Y_0(E)$ in the Zariski-Cohn topology restricted to rational points in $E$. Since $f$ does not depend on $E$, $f$ is indeed a homeomorphism for the Zariski-Cohn topology over any perfect inversive difference field extension of $K$. 
\end{proof}

\begin{remark}\label{rem:homeo}
    Note that from the proof above we have the following: If $X$ and $Y$ are locally closed varieties and $f,g$ twist-rational functions over $F$ such that $f$ induces a bijection between $X(E)$ and $Y(E)$ with $g$ being the inverse, for all perfect inversive difference field extension $E$ of $F$, then $f,g$ induce homeomorphisms in the Zariski-Cohn topology over any such $E$. 
    We call such maps \emph{twist-birational homeomorphisms}.
\end{remark}

The last part of this section will be devoted to proving the main result that we mentioned in the beginning. We will need the primitive element theorem for difference fields in the proof. Recall the definitions of limit degree, reduced limit degree and non-periodic difference fields in section \ref{sec:pre1}.

\begin{fact}[\mbox{\cite[Chapter 7, Theorem III]{cohn}}]\label{fact: primititve element} Let $F=(F,\sigma)$ be a non-periodic difference field and $E$ be a finitely generated transformally algebraic difference field extension of $F$. If $\rld(E/F)_\sigma=\ld(E/F)_\sigma$, then there exists an element $\gamma\in E$ and a natural number $t$ such that $\sigma^t(a)\in F(\gamma)_\sigma$ for all $a\in E$.
\end{fact}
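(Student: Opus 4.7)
The plan is to reduce to the classical primitive element theorem applied to an ascending tower of finite field extensions, and then to glue primitive elements across levels using the $\sigma$-action. Using finite generation, write $E = F(a_1,\ldots,a_n)_\sigma$ and set $L_k := F(\sigma^j(a_i) : 1 \leq i \leq n,\ 0 \leq j \leq k)$, so $E_0 \subseteq L_1 \subseteq \cdots$ is an ascending chain of field extensions of $F$ whose union (as a pure field) equals $E$. Since $E/F$ is transformally algebraic and finitely generated, each $L_k/F$ is a finite field extension. Under the hypothesis $\rld(E/F)_\sigma = \ld(E/F)_\sigma$, there is a threshold $k_0$ such that for all $k \geq k_0$ the extension $L_{k+1}/L_k$ is separable of constant degree $d := \ld(E/F)_\sigma$. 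Replacing the generators by $\sigma^{k_0}(a_i)$ (a shift that will be absorbed into the final $t$), I may assume the separability of $L_{k+1}/L_k$ holds from $k = 0$, and by a further shift if needed, that $L_0/F$ is also separable; thus the entire tower is separable over $F$.

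Next, I would apply the classical primitive element theorem level by level. For a generic $F$-linear combination $\gamma = \sum_{i,j} c_{i,j}\, \sigma^j(a_i)$ with $c_{i,j} \in F$ and the sum ranging over a large finite index set, standard arguments show that $F(\gamma) = L_N$ for the chosen $N$. The tuples of coefficients failing this property are cut out by a single non-trivial polynomial condition over $F$. This is where non-periodicity of $F$ enters decisively: by \cite[Lemma II, p.~201]{cohn}, no non-trivial difference polynomial vanishes identically on a non-periodic difference field, so in particular $F$ is infinite and generic coefficients $c_{i,j} \in F$ making $\gamma$ a primitive element of $L_N/F$ do exist.

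The main obstacle, and the real content beyond classical PIT, is ensuring that a \emph{single} $\gamma$ and a \emph{single} shift $t$ work uniformly for all $a \in E$, rather than allowing $t$ to grow with the level $k$. The leverage comes from the constancy of the limit degree in the stable regime: once $k \geq k_0$, each step $L_k \subseteq L_{k+1}$ adjoins a root of a separable polynomial of fixed degree $d$ over $L_k$. Applying $\sigma$, one sees that $\sigma(\gamma)$ is a primitive element for $\sigma(L_N)\subseteq L_{N+1}$ (modulo the action of $\sigma$ on the $F$-coefficients, which can be accommodated by choosing the $c_{i,j}$ further inside a $\sigma$-invariant subfield or by enlarging the index set). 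Iterating, $F(\gamma)_\sigma = F(\gamma,\sigma(\gamma),\sigma^2(\gamma),\ldots)$ eventually contains $\sigma^t(L_k)$ for every $k$, with $t$ a single cumulative shift determined by the two reduction steps above. The separability hypothesis $\rld = \ld$ is indispensable here: without it, the classical primitive element theorem would already fail at a single level due to purely inseparable obstructions, and no coherent $\sigma$-tracking would be possible.
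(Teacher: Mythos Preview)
First, note that the paper does not supply its own proof of this statement: it is cited directly as \cite[Chapter~7, Theorem~III]{cohn}, so there is no argument in the paper to compare against. What follows is an assessment of your sketch on its own merits.

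Your framework is the right one, and once the reduction is in place your final paragraph contains a correct key computation: if $F(\gamma)=L_N$, then $F(\sigma(\gamma))\supseteq\sigma(F)(\sigma(\gamma))=\sigma(L_N)$, hence $F(\gamma,\sigma(\gamma))\supseteq L_N\cdot\sigma(L_N)=L_{N+1}$, and iterating gives $F(\gamma)_\sigma\supseteq E$. The single $t$ then comes only from the initial replacement of the $a_i$ by $\sigma^{k_0}(a_i)$.

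The genuine gap is your reduction step. The claim that ``by a further shift if needed, $L_0/F$ is also separable'' is unjustified, and in positive characteristic it is false in general. The hypothesis $\rld(E/F)_\sigma=\ld(E/F)_\sigma$ forces $L_{k+1}/L_k$ to be separable for $k\geq k_0$, but says nothing about the bottom layer $L_{k_0}/F$. Concretely: take $F$ non-periodic of characteristic $p$ containing $c$ with $\sigma(c)=c$ and $c\notin F^p$ (e.g.\ $F=\Ff_p(c,(u_i)_{i\in\Zz})$ with $\sigma(u_i)=u_{i+1}$), set $b=c^{1/p}$ with $\sigma(b)=b$, and let $E=F(b,\alpha)_\sigma$ where $\alpha$ contributes only separable steps of constant degree. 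Then $\rld=\ld$ holds, yet every $L_k$ contains $b$; since $\sigma^m(b)=b$ for all $m$, no shift of the generators removes it, and $L_N/F$ is never separable. Your invocation of the classical primitive element theorem for $L_N/F$ therefore fails.

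To repair the argument you must either (i) show directly that $L_N/F$ is \emph{simple} (which does not require separability but needs a separate argument you have not given), or (ii) abandon the attempt to find $\gamma$ primitive for $L_N/F$ and instead choose $\gamma$ primitive for a separable step $L_{k_0+1}/L_{k_0}$, then run a limit-degree comparison between $F(\gamma)_\sigma$ and $E$ to force $\sigma^t(L_{k_0})\subseteq F(\gamma)_\sigma$ for some fixed $t$. Route (ii) is where the real content lies and is closer to how Cohn proceeds; your sketch correctly flags this as ``the main obstacle'' but does not resolve it.
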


\begin{lemma}\label{lem-rld}
Suppose $(K,\sigma)$ is a difference field of characteristic $p$. Let $L=K(\bar{a})_{\sigma}$ be a finitely generated difference field extension of $K$ which is transformally algebraic over $K$. Then there is $\tau:=\sigma\circ (\frob_p)^{t}$ for some $t\in\Nn$ and $n\in\Nn$ such that $\ld(F/K)_{\tau}=\rld(F/K)_{\tau}$ where $F:=K(\bar{a},\sigma(\bar{a}),\ldots,\sigma^{n}(\bar{a}))_{\tau}\leq L$.
\end{lemma}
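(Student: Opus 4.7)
The plan is to absorb the inseparable part of $\ld(L/K)_\sigma$ via a Frobenius twist.  Let $d:=\ld(L/K)_\sigma$, $d_s:=\rld(L/K)_\sigma$, and write $d/d_s=p^r$.  Choose $N\in\Nn$ with $[K_{k+1}:K_k]=d$ and $[K_{k+1}:K_k]_s=d_s$ for every $k\geq N$, where $K_k:=K(\bar a,\sigma(\bar a),\ldots,\sigma^k(\bar a))$.  Pick $t\geq r$ and $n\geq N$, and set $\tau:=\sigma\circ\frob_p^t$, $\bar b:=(\bar a,\sigma(\bar a),\ldots,\sigma^n(\bar a))$, $F:=K(\bar b)_\tau$, and $F_k:=K(\bar b,\tau(\bar b),\ldots,\tau^k(\bar b))$.

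A straightforward induction on $j$ using $\tau(x)=\sigma(x)^{p^t}$ gives $\tau^j(\bar b)=\sigma^j(\bar b)^{p^{jt}}$; rearranging, $F_k=K_n(\sigma^{n+j}(\bar a)^{p^{jt}}:1\leq j\leq k)\subseteq K_{n+k}$.  The only new generator at step $k+1$ is $\gamma_k:=\sigma^{n+k+1}(\bar a)^{p^{(k+1)t}}$, so that $F_{k+1}=F_k(\gamma_k)$.

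The crux is to show that $F_{k+1}$ is separable over $K_n$.  By the choice of $N$, the extension $K_{n+j}/K_n$ has inseparable degree $p^{jr}$, so its maximal separable subextension $E_j\subseteq K_{n+j}$ satisfies $x^{p^{jr}}\in E_j$ for every $x\in K_{n+j}$ (a purely inseparable extension of degree $p^M$ has exponent at most $M$).  Applied componentwise to $\sigma^{n+j}(\bar a)$, this yields $\sigma^{n+j}(\bar a)^{p^{jr}}\in E_j$, and since $t\geq r$ and $E_j$ is closed under the $p$-power map,
\[\sigma^{n+j}(\bar a)^{p^{jt}}=\bigl(\sigma^{n+j}(\bar a)^{p^{jr}}\bigr)^{p^{j(t-r)}}\in E_j.\]
As $E_j\subseteq E_{k+1}$ for $j\leq k+1$, all generators of $F_{k+1}$ lie in $E_{k+1}$, so $F_{k+1}$ is separable over $K_n$.

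In particular $\gamma_k$ is separable over $K_n$, hence separable over $F_k\supseteq K_n$ (its minimal polynomial over $F_k$ divides the separable minimal polynomial over $K_n$).  Thus $F_{k+1}/F_k$ is separable and $[F_{k+1}:F_k]=[F_{k+1}:F_k]_s$ for every $k\geq 0$; taking the minimum over $k$ on both sides gives $\ld(F/K)_\tau=\rld(F/K)_\tau$.  The main technical point is the exponent bound for the purely inseparable part of $K_{n+j}/K_n$, which ensures that the single choice $t\geq r$ simultaneously handles every level of the tower; the rest is bookkeeping with the identity $\tau=\sigma\circ\frob_p^t$.
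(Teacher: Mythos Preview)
Your proof is correct and uses the same core idea as the paper: choose the twist $\tau=\sigma\circ\frob_p^{t}$ so that the $p$-powers appearing in $\tau^{j}(\bar b)=\sigma^{j}(\bar b)^{p^{jt}}$ are large enough to land in the separable part of the tower. The execution differs in a useful way. The paper takes $t$ exactly (so $\ld/\rld=p^{t}$), shows $F_{n}\subseteq L'_{n}$ (the separable part of $L_{n}$ over $L_{n-1}$) to get $\ld(F/K)_{\tau}\leq m$, and then separately argues that $L_{n}/F_{n}$ is purely inseparable to obtain $\rld(F/K)_{\tau}=m$; both bounds together pin the value at $m$. You instead work with the maximal separable subextension $E_{j}$ of $K_{n+j}$ over $K_{n}$, use the exponent bound for purely inseparable extensions to place each generator $\sigma^{n+j}(\bar a)^{p^{jt}}$ in $E_{j}\subseteq E_{k+1}$, and conclude directly that every step $F_{k+1}/F_{k}$ is separable. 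This sidesteps the paper's ``$L_{n}/F_{n}$ purely inseparable'' check and never needs the actual value of the limit degree; it also shows that any $t\geq r$ works, not just $t=r$. The trade-off is that the paper's argument records the extra information $\ld(F/K)_{\tau}=\rld(F/K)_{\tau}=m=\rld(L/K)_{\sigma}$, whereas yours only gives the equality $\ld=\rld$.
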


\begin{proof}

For $p=0$, the statement is trivially true. For $p>0$,
take a natural number $n_0$ large enough so that for $L_0:=K(\bar{a},\sigma(\bar{a}),\ldots,\sigma^{n_0}(\bar{a}))$ and $L_1:=K(\bar{a},\sigma(\bar{a}),\ldots,\sigma^{n_0+1}(\bar{a}))$ we have \[\ld(L/K)_\sigma=p^tm=[L_{1}:L_{0}]\] and $\rld(L/K)_{\sigma}=m=[L_{1}:L_{0}]_s$ for some $m$ and $t$. Let $\bar{b}=(\bar{a},\sigma(\bar{a}),\ldots,\sigma^{n_0}(\bar{a}))$ and $L_n:=K(\bar{b},\sigma(\bar{b}),\ldots,\sigma^{n}(\bar{b}))=K(\bar{a},\sigma(\bar{a}),\ldots,\sigma^{n+n_0}(\bar{a}))$. Then $[L_{n+1}:L_{n}]=\ld(L/K)_\sigma=p^tm$ and $[L_{n+1}:L_{n}]_s=\rld(L/K)_\sigma=m$. We use $L'_{n+1}$ to denote the separable part of $L_{n+1}$ over $L_{n}$.
Let $\tau:=\sigma\circ(\frob_p)^t$. We may regard $L$ as a $\tau$-difference field naturally (but possibly not finitely generated as a $\tau$-difference field over $K$). Let $F=K(\bar{b})_{\tau}\leq L$ and $F_n:=K(\bar{b},\tau(\bar{b}),\ldots,\tau^{n}(\bar{b}))$.

Note that $F_0=L_0=K(\bar{b})$ and $F_n\leq L'_n$ for all $n\geq 1$, since by definition of $L_n'$, for any $x\in L_n$, $x^{p^t}\in L'_n$ and in particular $\tau^i(\bar{b})=(\sigma^i(\bar{b}))^{p^{it}}\in L'_n$ for all $1\leq i\leq n$. Therefore, $[F_1:F_0]\leq [L'_1:F_0=L_0]\leq m$ and hence $\rld(F/K)_{\tau}\leq \ld(F/K)_{\tau}\leq m$.

It remains to show that $\rld(F/K)_\tau=m$.  Note that by our construction, it is not hard to check that $L_n$ is a purely inseparable extension of $F_n$.

By the multiplicativity of separable degrees in towers, we have that \[m=[L_n:L_{n-1}]_s=[L_n:F_{n-1}]_s=[L_n:F_n]_s[F_n:F_{n-1}]_s=[F_n:F_{n-1}]_s.\] It follows that $\rld(F/K)_\tau=\rld(L/K)_\sigma=m$. 
\end{proof}

\begin{definition}
Let $(K,\sigma)$ be a perfect, inversive $\sigma$-difference field and $X\subseteq\Aa^n_{(K,\sigma)}$ be a $\sigma$-difference variety given by difference polynomials $\{P_i(x_1,\ldots,x_n)=0:i\leq N\}$. Let $\tau$ be the $\ell$'th twist of $\sigma$ for some $\ell\in \Zz$. We may regard $X$ as a $\tau$-variety $X_\tau\subseteq \Aa^n_{(K,\tau)}$ in the following way: we replace $\sigma(t)$ with $\tau(t^{p^{\ell}})$ in the polynomials $P_i$ for any occurrence of $\sigma(t)$ for any $\mathcal{L}_{ring}$-term $t$ and then applying $p^{th}$ power repeatedly until there is no occurrence of $t^{p^z}$ for negative $z$. In this way, we get a $\tau$-difference polynomial $P_i'$. Now we take $X_{\tau}$ to be defined by the perfect difference ideal generated by $\{P_i'(x_1,\ldots,x_n)=0:i\leq N\}$.
\end{definition}
\begin{remark}
Let $K(X)_\sigma=K(a_1,\ldots,a_n)_\sigma$ be the function field of $X$ and $L$ be the perfect inversive closure of $K(X)_\sigma$. Let $F:=K(a_1,\ldots,a_n)_{\tau}$ be the $\tau$-difference subfield of $(L,\tau)$. Then $F$ is the function field of $X_{\tau}$ over $(K,\tau)$. 
\end{remark}

\begin{theorem}\label{thm-pi-twist-uniform}

Let $X\subseteq\Aa^n_D$ be a difference variety with $\tdim_D(X)=d>0$ where $D=\mathbb{Z}[b]_\sigma$ is a finitely generated difference domain of characteristic $p\geq 0$ for some $b=(b_1,\ldots,b_m)$. Then there is
\begin{itemize}
    \item 
    a sequence $d=d_0\geq d_1\geq\cdots\geq d_N>0$;
    \item 
   locally closed varieties $(X_i)_{i\leq N}$ with each $X_i$ open in the closed variety $X\setminus \bigcup_{j<i}X_j$ over $D$ with $\tdim_D(X_i)=d_i$, and $\tdim_D(X\setminus\bigcup_{i\leq N}X_i)=0$;
    \item 
    $D'=D[1/c]_\sigma$ with $c\in D\setminus\{0\}$;
    \item 
    twists $(\tau_i)_{i\leq N}$ of $\sigma$;
    \item 
    quantifier-free $\mathcal{L}_{\sigma,\sigma^{-1},\frob_p^{-1},\tau_0,\ldots,\tau_N}$-formulas $(\psi_{i}(t^i;y))_{i\leq N}$, $(\theta_i(x,t^i,y))_{i\leq N}$ with $x=(x_1,\ldots,x_n)$, $y=(y_1,\ldots,y_m)$ and $t^i=(t_1,\ldots,t_{d_i+1})$;
    \item 
    and $\tau_i$-polynomials $(Q_i(t^i)_{i\leq N}$,
\end{itemize}
such that the following holds:

For every homomorphism $\eta: D'\to (K,\sigma)$ with $K$ perfect inversive,
\begin{enumerate}

    \item 
    $\psi_{i}(t^i;\eta(b))$ defines a locally closed $\tau_i$-subvariety $Y^\eta_i$ in $\Aa^{d_i+1}_{(K,\tau_i)}$ for each $i$;
    \item 
    $\theta_i(x,t^i;\eta(b))$ defines a twist-birational homeomorphism between the $\sigma$-variety $X^\eta_i(K)$ and the $\tau_i$-variety $Y^\eta_i(K)$;
   
    \item 
    $Q_i(a_1,\ldots,a_{d_i+1},\eta(b))=0$,
    $(\partial Q_i(t^i,\eta(b))/\partial t_1)(a_1,\ldots,a_{d_i+1})\neq 0$ for all $(a_1,\ldots,a_{d_i+1})\in Y^\eta_i(K)$. In particular, $Q_i(t^i,\eta(b))$ is not the zero polynomial in $K[t_1,\ldots,t_{d_i+1}]_{\tau_i}$,
\end{enumerate}
where we consider $(K,\sigma)$ with the natural $(K,\sigma,\sigma^{-1},\frob_p^{-1},\tau_0,\ldots,\tau_N)$-expansion.
\end{theorem}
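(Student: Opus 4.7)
The plan is: (i) for each irreducible component of $X$, construct a twist-birational homeomorphism onto an open subset of a hypersurface in $\Aa^{d+1}$, cut out by a single difference polynomial with a non-vanishing partial derivative; (ii) stratify $X$ by Noetherian induction on transformal dimension by iterating this construction on the closed complement; and (iii) descend all of the data to a localisation $D' = D[1/c]_\sigma$ of $D$, verifying that the constructions are uniformly definable in the defining parameters of $X$.

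For step (i), let $X$ be irreducible of transformal dimension $d$ over $F = \Frac(D)$, with function field $L = F(X)_\sigma$. Pick a transformal transcendence basis $a_1, \ldots, a_d$ among the generators, set $E = F(a_1,\ldots,a_d)_\sigma$, and observe that $L$ is transformally algebraic over $E$. I would apply Lemma~\ref{lem-rld} over $E$ to find a twist $\tau = \sigma \circ \frob_p^t$ and a finitely generated subextension $L_0 \subseteq L$, sharing the same perfect inversive closure as $L$, with $\ld(L_0/E)_\tau = \rld(L_0/E)_\tau$. The primitive element theorem (Fact~\ref{fact: primititve element}) then supplies $\gamma \in L_0$ and $r \in \mathbb{N}$ with $\tau^r(L_0) \subseteq E(\gamma)_\tau$; after replacing $\gamma$ by $\tau^r(\gamma)$ and possibly enlarging $\tau$ once more, $E(\gamma)_\tau$ and $L$ share a perfect inversive closure. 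Let $P(t_1,\ldots,t_{d+1}) \in E[t_1,\ldots,t_{d+1}]_\tau$ be the minimal $\tau$-polynomial of $\gamma$ over $E$ (with $t_1,\ldots,t_d$ corresponding to $a_1,\ldots,a_d$). The full twisting reduction (Definition~\ref{Def-Twist} and Lemma~\ref{lem:twist-reduction}) replaces $P$ by $Q$, possibly after a further twist, with $\partial Q/\partial t_{i_0} \neq 0$ for some $i_0$; permute coordinates so that $i_0 = 1$. The hypersurface $Y = V(Q) \subseteq \Aa^{d+1}$ is twist-birational to $X$, so Lemma~\ref{lem-openhomeo} and Remark~\ref{rem:homeo} produce open subvarieties $X_0 \subseteq X$ and $Y_0 \subseteq Y$, together with twist-rational functions defined over $F$ yielding inverse homeomorphisms; shrinking $Y_0$ further enforces $\partial Q/\partial t_1 \neq 0$ throughout $Y_0$.

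For step (ii), $X \setminus X_0$ is closed and, by Fact~\ref{RittRaud}, has finitely many irreducible components. Setting $d_0 := d$, I apply step (i) to each top-transformal-dimensional irreducible component of what remains and recurse; the process terminates by the ascending chain condition on perfect difference ideals. I stop as soon as the residual closed set has transformal dimension $0$, which produces the sequence $d_0 \geq d_1 \geq \cdots \geq d_N > 0$ and the locally closed pieces $X_0, X_1, \ldots, X_N$.

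The main obstacle will be step (iii), the uniform formulation. All of the above lives over $F = \Frac(D)$, but we need single quantifier-free formulas $\psi_i, \theta_i$ and polynomials $Q_i(t^i, y)$ valid under every specialisation $\eta : D' \to (K, \sigma)$ with $K$ perfect inversive. The plan is to take $D' = D[1/c]_\sigma$, where $c$ is a product of the finitely many elements of $D$ appearing as denominators in: the coefficients of each $Q_i$, of the twist-rational functions $f_i, g_i$ realising the homeomorphisms, and of the open and closed conditions cutting out the $X_i$ and $Y_i$. The finite computations producing the transformal transcendence basis, primitive element, twist exponent $t$, and twisting-reduction exponent $\ell$ are all determined by the coefficients of the defining polynomials of $X$, and their outputs live in a finitely generated subring of $F$ over $D$; once $c$ is inverted, these coefficients become rational functions in $y = (y_1, \ldots, y_m)$, and the formulas $\psi_i(t^i; y), \theta_i(x, t^i; y)$ are then obtained by substitution in the expanded language $\mathcal{L}_{\sigma,\sigma^{-1},\frob_p^{-1},\tau_0,\ldots,\tau_N}$, which is interpretable in $(K,\sigma)$ precisely because $K$ is perfect and inversive.
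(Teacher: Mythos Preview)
Your strategy matches the paper's closely: Noetherian induction to reduce to finding a single open piece, then for an irreducible $X$ use a transformal transcendence basis, Lemma~\ref{lem-rld}, the primitive element theorem, and the full twisting reduction, followed by Lemma~\ref{lem-openhomeo}.  However there is a genuine gap in step~(i), and step~(iii) is handled less cleanly than in the paper.

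\medskip

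\textbf{Gap in step (i): nonvanishing of the derivative at the generic point.}  Lemma~\ref{lem:twist-reduction} only tells you that $\partial Q/\partial t_{i_0}$ is a nonzero \emph{polynomial}; it does not tell you that $\partial Q/\partial t_{i_0}(a_1,\ldots,a_d,\gamma)\neq 0$.  Without this, ``shrinking $Y_0$ further to enforce $\partial Q/\partial t_1\neq 0$'' could yield the empty set, and the twist-birational homeomorphism would have empty target.  The paper deals with this by a specific choice of $P$: one takes $P\in F[t_1,\ldots,t_{d+1}]_\tau\setminus F[\tau(t_1),\ldots,\tau(t_{d+1})]_\tau$ vanishing at $(x_1,\ldots,x_d,\gamma)$ of \emph{minimal total difference degree}, and then proves a Claim that $\partial Q/\partial t_{i_0}$ is itself the $r$'th twisting reduction of a nonzero polynomial $P_1$ of strictly smaller total difference degree than $P$.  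By minimality, $P_1(x_1,\ldots,x_d,\gamma)\neq 0$, whence $\partial Q/\partial t_{i_0}(x_1,\ldots,x_d,\gamma)\neq 0$.  Your phrase ``minimal $\tau$-polynomial of $\gamma$ over $E$'' is too vague to support this argument, and you never address the issue.

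\medskip

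\textbf{Step (iii): uniformity.}  The paper's argument is both cleaner and more robust than ``invert all denominators that appear''.  One first observes that it suffices to prove the statement for injective $\eta$, since every such $\eta$ factors through the canonical inclusion $\iota:D\hookrightarrow F$ into the perfect inversive closure of $\Frac(D)$.  Conditions (1)--(3) are first-order (for (2), by Remark~\ref{rem:homeo} it is enough that $\theta_i$ define a twist-rational bijection), so once they hold for $\iota$ they follow from the positive quantifier-free type of $\Frac(D)$ together with the axioms of perfect inversive difference fields.  Compactness then yields a finite subset $\Sigma$ of that type, and one chooses $c\in D\setminus\{0\}$ so that $\Sigma$ is preserved under every homomorphism from $D[1/c]_\sigma$.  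Your direct approach would require separately checking that, under each specialisation, the twist exponents remain correct, $Q$ remains nonzero, the derivative condition survives, and the twist-rational functions remain mutually inverse --- all of which is bypassed by the compactness argument.
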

\begin{proof}
It suffices to show this result for $\eta$ injective. Indeed, let $F$ denote the perfect inversive closure of $\Frac(D)$ with the canonical inclusion $\iota:D\to F$. 
For every perfect inversive difference field $E$ and injective $\eta:D\to E$, it factors through $\iota$. If the result holds for all injective $\eta$, the positive quantifier-free type of $\Frac(D)$ with the axioms of perfect inversive difference fields implies (1) to (3). Note that they are all first-order expressible, for example to express (2), it is enough to say $\theta_i$ defines a twist-birational bijection, and homeomorphism follows from Remark~\ref{rem:homeo}.
By compactness, a finite subset $\Sigma$ of the positive quantifier-free type of $\Frac(D)$ is sufficient. Such $\Sigma$ can always be preserved after inverting the product of finitely many elements in $D$.

By Noetherian induction, it is enough to find an open set $X_0\subseteq X$ and $Y_0$ satisfying the conclusion of the theorem.
By looking at each component, we may further assume $X$ is irreducible in the Zariski-Cohn topology over $\Frac(D)$ and $\tdim_D(X)=d>0$. 

Let $F[x_1,\ldots,x_n]_{\sigma}/I(X)$ be the coordinate ring of $X$ and $L=F(X)_\sigma$ be the function field and note that $\tdim_F(X)=d$. Up to relabelling, we may assume $x_1,\ldots,x_{d}$ is a transformal transcendence basis of $L$ over $F$. Let $F':=F(x_1,\ldots,x_d)_{\sigma}\leq L$ and $\bar{a}=(x_{d+1},\ldots,x_n)$. Then $L=F'(\bar{a})_\sigma$ is finitely generated and transformally algebraic over $F'$. Note $F'$ is non-periodic since $d>0$. By Lemma~\ref{lem-rld}, there is some $\tau=\sigma\circ(\frob_p)^t$ for some $t\in \Nn$ and $n_0\in \Nn$ such that $\ld(C/F')=\rld(C/F')$ where $C=F'(\bar{a},\sigma(\bar{a}),\ldots,\sigma^{n_0}(\bar{a}))_\tau$. Then by Fact~\ref{fact: primititve element}, there is $\gamma\in C$ and $s\in\Nn$ such that $\tau^s(C)\subseteq F'(\gamma)_{\tau}$. Particularly, for $M:=F(x_1,\ldots,x_d,\gamma)_{\tau}$ and $E:=F(x_1,\ldots,x_d,\bar{a})_{\tau}$ with $\tilde{M}$ and $\tilde{E}$ denoting the perfect inversive closure of $M$ and $E$ respectively, we have $\tilde{M}=\tilde{E}$.

Let $P\in F[t_1,\ldots,t_{d+1}]_\tau\setminus F[\tau(t_1),\ldots,\tau(t_{d+1})]_\tau$ such that  $P(x_1,\ldots,x_d,\gamma)=0$. We choose $P$ of minimal total difference degree with these properties. 
By Lemma~\ref{lem:twist-reduction}, if $Q(t_1,\ldots,t_{d+1})\in F[t_1,\ldots,t_{d+1}]_{\tau'}$ denotes the full twist reduction of $P$, where $\tau'$ is some twist of $\tau$, in particular a twist of $\sigma$, there is $i_0$ such that $\frac{\partial Q}{\partial t_{i_0}}$ is non-trivial. 

\begin{claim-star}
We have $\frac{\partial Q}{\partial t_{i_0}}(x_1,\ldots,x_d,\gamma)\neq0$.
\end{claim-star}

\begin{proof}

To prove the claim, assume that $\tau=\tau'\circ(\frob_p)^r$, where $r\in\mathbb{N}$, and that with the notation from Definition~\ref{Def-Twist} we have $P(\overline{t})=\sum_{j\in J}a_j\prod_{1\leq i\leq d+1}t_i^{\mu_{i,j}}$.

Then by construction, 
\[Q(\overline{t})=\sum_{j\in J}a^{1/p^r}_j\prod_i t^{f(\mu_{i,j})/p^r}_i,
\]
for $f:\mathbb{N}[\tau]\to \mathbb{N}[\tau']$, $\tau\mapsto p^r\tau'$.

In the following, we will show that $\frac{\partial Q}{\partial t_{i_0}}$ is the $r$'th twist reduction of a (non-zero) polynomial $P_1(\bar{t})\in K[\bar{t}]_\tau$ of strictly smaller total difference degree than $P$. As $P_1(x_1,\ldots,x_d,\gamma)=0$ if and only if $\frac{\partial Q}{\partial t_{i_0}}(x_1,\ldots,x_d,\gamma)=0$, we arrive at the conclusion of the claim.

Note that any monomial in $P$ is of the form $P_0(\bar{t})=a t_{i_0}^{p^rn_0}\tilde{P}_0(\bar{t})$ for some $n_0\geq 0$ and $a\in F\setminus\{0\}$ with $\tilde{P}_0$ a polynomial in $K[t_1,\ldots,t_{i_0-1},\tau(t_{i_0}),t_{i_0+1},\ldots,t_{d+1}]_\tau$, with $r$'th twist reduction $Q_0$, then $\frac{\partial Q_0}{\partial t_{i_0}}$ is either $0$ or the $r$'th twist reduction of $bt_{i_0}^{p^r(n_0-1)}\tilde{P}_0(\bar{t})$ for some $b\in F\setminus\{0\}$ and $n_0>0$. Note in particular, every monomial in $Q$ is the $r$'th twist reduction of some monomial in $P$. It follows that $\frac{\partial Q}{\partial t_{i_0}}$ is the $r$'th twist reduction of some unique $P_1(\bar{t})$ and the total difference degree of $P_1$ is strictly smaller than the one of $P$.

\end{proof}

After relabelling variables, we may assume $t_{i_0}=t_{1}$. Note that $\tau'=\sigma\circ(\frob_p)^z$ for some $z\in\Zz$. Let $E':=F(x_1,\ldots,x_d,\bar{a})_{\tau'}\leq \tilde{E}$ and $M':=F(x_1,\ldots,x_d,\gamma)_{\tau'}\leq \tilde{M}$, then $E'$ and $M'$ have perfect inversive closures $\tilde{E}$ and $\tilde{M}$ and $(\tilde{E},\tau')=(\tilde{M},\tau')$, since $(\tilde{E},\tau)=(\tilde{M},\tau)$.

Let $Y$ be the $\tau'$-variety defined by the vanishing $\tau'$-polynomials of $(x_1,\ldots,x_d,\gamma)$ in $M'$ over $F$. Then $Q\in I(Y)$ by construction and $\tdim_F(Y)=d$.  

Let $X_{\tau'}\subseteq \Aa^n_{(F,\tau')}$ be the $\tau'$-variety defined by $I((x_1,\ldots,x_{d},\bar{a})/F)$.
Then $X_{\tau'}$ and $Y$ are twist-birational over $F$, and so by Lemma~\ref{lem-openhomeo}, there are non-empty open subsets $X'_0\subseteq X_{\tau'}$ and $Y'_0\subseteq Y$ and twist-rational functions $f,g$ over $F$ such that $f:X'_0(H)\to Y'_0(H)$ is a homeomorphism with inverse $g$ for any perfect inversive $\tau'$-difference field extension $H$ of $(F,{\tau'})$.

Let $U_0$ be defined by $\frac{\partial Q}{\partial t_{1}}\neq 0$ and $Y_0:=Y'_0\cap U_0$. Then $Y_0$ is open in the closed subvariety $Y\subseteq \Aa^{d+1}_{(F,\tau')}$ of dimension $d$ and $\frac{\partial Q}{\partial t_{1}}$ is everywhere non-zero on $Y_0$. Let $X_0$ be the open set in $X$ over $F$ such that $(X_0)_{\tau'}=f^{-1}(Y_0'\cap U_0)\subseteq X_0'$. Note that since $F$ is the perfect inversive closure of $\Frac(D)$, we may chase denominators and compose with $\frob_p$ and $\sigma$ to make $X_0$ defined over $D$. Note that both $Y_0$ and $f$ can be defined using quantifier-free $\mathcal{L}_{\sigma,\sigma^{-1},\frob_p^{-1},\tau_0,\ldots,\tau_N}$-formulas over $D$. Thus $X_0$, $Y_0$, $f$ and $Q$ satisfy the conclusion.
\end{proof}

\begin{remark}\label{rem: uniformstratification}
    By the same argument as in the proof of Theorem~\ref{thm-equiDim}, the constant $N$, the complexity of $(X_i)_{i\leq N}$, and the complexity of all formulas $\psi_i,\theta_i$ and twists $\tau_i$ only depend on the complexity of $X$ and do not depend on $D$.
\end{remark}

\section{Counting in finite difference fields}\label{sec.Count}

In this section, we prove our main theorem concerning an estimate of the number of rational points of difference varieties uniformly over finite difference fields. To this end, we need to use an improved version of the Lang-Weil estimate from~\cite{cafure}. We begin by recalling some ad-hoc notions of degree for affine varieties.
\begin{definition}
Let $V\subseteq \mathbb{A}^n$ be an affine irreducible variety over $K$. Define the \emph{degree} of $V$, $\widetilde{\deg}(V)$, to be the maximal number of points lying in the intersection of $V=V(K^{alg})$ and an affine linear subspace $L$ of $\mathbb{A}^n$ of codimension $\adim(V)$ for which $|V\cap L|<\infty$ holds.

More generally, if $V$ is an affine $K$-variety, let $V=\bigcup_{i\leq N}W_i$ be the decomposition of irreducible $K$-components. We define the degree of $V$ as:
\[\widetilde{\deg}(V):=\sum_{i\leq N}\widetilde{\deg}(W_i).\]
\end{definition}
\begin{remark}\label{rmk: degree}
     Let $V\subseteq \mathbb{A}^n$ be a hypersurface defined by $f$. Then it follows immediately from the definition that $\widetilde{\deg}(V)\leq \deg(f)$. (See also \cite[Proposition I.7.6]{Hartshorne}.)
\end{remark}

\begin{fact}[\mbox{\cite[Section 2]{cafure}}]\label{bezoutineq}
If $V$ and $W$ are subvarieties of $\mathbb{A}^n$ over $K$, then
\[\widetilde{\deg}(V\cap W)\leq\widetilde{\deg}(V)\widetilde{\deg}(W).\]
\end{fact}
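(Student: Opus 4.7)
The plan is to follow the classical Heintz-style argument for the affine Bezout inequality, as developed in the Cafure--Matera reference.

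First, I would reduce to the irreducible case. Writing $V = \bigcup_i V_i$ and $W = \bigcup_j W_j$ as decompositions into irreducible $K$-components, observe that $V \cap W = \bigcup_{i,j}(V_i \cap W_j)$, so every irreducible component of $V \cap W$ arises as a component of some $V_i \cap W_j$. Hence $\widetilde{\deg}(V \cap W) \leq \sum_{i,j}\widetilde{\deg}(V_i \cap W_j)$, and once the inequality is established for pairs of irreducible varieties, distributivity gives the general case: $\sum_{i,j}\widetilde{\deg}(V_i)\widetilde{\deg}(W_j) = \widetilde{\deg}(V)\widetilde{\deg}(W)$.

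Second, for irreducible $V, W \subseteq \mathbb{A}^n$ of dimensions $r$ and $s$ with $e = \adim(V \cap W)$, I would induct on the quantity $r + s - e$, which measures the excess of the intersection over the expected codimension. The base case is when $V \subseteq W$ or $W \subseteq V$, where the bound is immediate since $\widetilde{\deg}(X) \geq 1$ for any non-empty variety $X$. For the inductive step, invoke a Bertini-type statement: a generic hyperplane $H$ cuts $V$ in an equidimensional variety of dimension $r - 1$ with $\widetilde{\deg}(V \cap H) \leq \widetilde{\deg}(V)$ (directly from the definition of $\widetilde{\deg}$ via generic linear sections of codimension equal to the dimension), and similarly for $W$, while $(V \cap W) \cap H$ has dimension $e - 1$ and satisfies $\widetilde{\deg}((V \cap W) \cap H) \geq \widetilde{\deg}(V \cap W)$, since each irreducible component of $V \cap W$ of dimension $e$ meets a generic $H$ transversely and contributes at least its own degree. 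Applying the inductive hypothesis inside $H \cong \mathbb{A}^{n-1}$ to $V \cap H$ and $W \cap H$ then yields $\widetilde{\deg}(V \cap W) \leq \widetilde{\deg}((V \cap W) \cap H) \leq \widetilde{\deg}(V \cap H)\,\widetilde{\deg}(W \cap H) \leq \widetilde{\deg}(V)\,\widetilde{\deg}(W)$.

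The main technical obstacle will be the simultaneous genericity of $H$: one needs a single hyperplane that is generic with respect to $V$, $W$, and each irreducible component of $V \cap W$ at once, so that the equidimensionality, the correct dimension drop of the intersection, and the degree-preservation all hold at the same time. This is possible because each of these conditions cuts out a Zariski-open subset of the dual projective space of hyperplanes, and a finite intersection of non-empty Zariski-opens is non-empty over an infinite field; since $\widetilde{\deg}$ is computed over $K^{\mathrm{alg}}$, this poses no issue. A further subtlety in positive characteristic is that the classical Bertini irreducibility statement may fail, but the weaker equidimensionality and degree inequalities needed here still hold, as is worked out carefully in~\cite{cafure}.
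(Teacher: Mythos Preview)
The paper does not prove this statement; it is recorded as a Fact with a citation to \cite[Section~2]{cafure}, so there is no proof in the paper to compare against.

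Your outline contains a genuine error in the inductive step. The inequality $\widetilde{\deg}(V \cap W) \leq \widetilde{\deg}((V \cap W) \cap H)$ goes the wrong way: for a generic hyperplane $H$, each positive-dimensional irreducible component $C$ of $V \cap W$ satisfies $\widetilde{\deg}(C \cap H) = \widetilde{\deg}(C)$, while any zero-dimensional component of $V \cap W$ misses $H$ entirely and contributes nothing on the right. Hence $\widetilde{\deg}((V \cap W) \cap H) \leq \widetilde{\deg}(V \cap W)$, and the inequality is strict whenever $V \cap W$ has isolated points---which can occur even for irreducible $V$ and $W$, since $V \cap W$ need not be equidimensional. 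Your chain of inequalities therefore collapses at exactly the step needed to close the induction. (The base case is also not well-posed: $V \subseteq W$ gives $r + s - e = s$, not a fixed minimal value of the induction parameter, so it does not anchor an induction on $r + s - e$.)

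The standard route, as in Heintz and in \cite{cafure}, avoids this by a different reduction: one proves $\widetilde{\deg}(X \cap H) \leq \widetilde{\deg}(X)$ for an \emph{arbitrary} hyperplane $H$, then realises $V \cap W$ as the intersection of $V \times W \subseteq \mathbb{A}^{2n}$ with the $n$ hyperplanes cutting out the diagonal, and applies the hyperplane bound $n$ times together with $\widetilde{\deg}(V \times W) = \widetilde{\deg}(V)\,\widetilde{\deg}(W)$. This never requires bounding $\widetilde{\deg}(V \cap W)$ from below by a hyperplane section, which is where your argument breaks.
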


We are now ready to state the improved Lang-Weil estimate that we will use later.
\begin{fact}[\mbox{\cite[Theorem 7.1]{cafure}}]\label{lem-counting}
Let $V$ be an absolutely irreducible affine algebraic variety. Assume that $V$ is defined over $\mathbb{F}_q$, of dimension $r>0$ and $\widetilde{\deg}(V)=:\ell$. If $q>2(r+1)\ell^2$, then the following estimate holds:
$$||V(\mathbb{F}_q)|-q^r|\leq(\ell-1)(\ell-2)q^{r-\frac{1}{2}}+5\ell^{\frac{13}{3}}q^{r-1}.$$
\end{fact}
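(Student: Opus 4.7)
The plan is to follow the classical route to an explicit Lang--Weil bound, reducing to the hypersurface case via a generic linear projection and then applying Weil's theorem for curves through a generic slicing argument. This is the approach of Cafure--Matera, refining earlier work of Schmidt.

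First, I would find a linear projection $\pi: \mathbb{A}^n \to \mathbb{A}^{r+1}$ defined over $\mathbb{F}_q$ (the hypothesis $q > 2(r+1)\ell^2$ is large enough that generic $\mathbb{F}_q$-rational choices exist) such that $\pi|_V$ is birational onto its image $V^* \subseteq \mathbb{A}^{r+1}$, with $V^*$ an absolutely irreducible hypersurface of degree at most $\ell$. Existence of such $\pi$ is standard, combining Noether normalization with a primitive element argument. The loci in $V$ and in $V^*$ where $\pi|_V$ fails to be bijective are closed subvarieties of dimension at most $r-1$, whose degrees can be controlled polynomially in $\ell$ via Bezout's inequality (Fact~\ref{bezoutineq}).

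Second, I would estimate $|V^*(\mathbb{F}_q)|$ by slicing. For a generic $\mathbb{F}_q$-rational affine subspace $L$ of codimension $r-1$, the intersection $C := V^* \cap L$ is an absolutely irreducible plane curve of degree at most $\ell$, and Weil's theorem for curves gives $\bigl| |C(\mathbb{F}_q)| - q \bigr| \leq (\ell-1)(\ell-2)q^{1/2}$. Summing $|V^*(\mathbb{F}_q) \cap L(\mathbb{F}_q)|$ over all such $L$ and dividing by the multiplicity with which each point of $V^*$ is counted produces the main term $q^r$ together with the leading error $(\ell-1)(\ell-2)q^{r-1/2}$. The \emph{bad} slices---those for which $V^* \cap L$ is not absolutely irreducible, has the wrong dimension, or lies in the singular locus of $V^*$---contribute only to a lower-order error, and their count is again controlled polynomially in $\ell$ by Bezout-style arguments applied to appropriate discriminant subvarieties.

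Third, the trivial bound $|Z(\mathbb{F}_q)| \leq \widetilde{\deg}(Z)\, q^{\dim Z}$ applied to the exceptional locus of $\pi|_V$ transfers the estimate from $V^*$ back to $V$, introducing at most an additional $O(\ell^{C} q^{r-1})$ discrepancy for an explicit $C$. The main obstacle I expect is the careful combinatorial bookkeeping required to drive the combined lower-order error down to exactly $5\ell^{13/3} q^{r-1}$; the exponent $13/3$ is not intrinsic to any individual step but arises from optimizing between the degrees of the projection-exceptional locus, the non-absolutely-irreducible sectional loci, and the singular locus of $V^*$, each of which is bounded by a different polynomial in $\ell$ via repeated applications of Fact~\ref{bezoutineq}. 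The quantitative hypothesis on $q$ is precisely the threshold beyond which all the generic choices in the above argument can be realized by $\mathbb{F}_q$-rational data.
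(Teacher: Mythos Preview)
The paper does not prove this statement at all: it is stated as a \textbf{Fact} with a bare citation to \cite[Theorem~7.1]{cafure}, and is used as a black box in the proof of Theorem~\ref{thm-finiteryCount}. So there is no ``paper's own proof'' to compare against.

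That said, your sketch is a faithful outline of the Cafure--Matera argument: project birationally to a hypersurface in $\mathbb{A}^{r+1}$, slice by generic affine subspaces of the right codimension to reduce to Weil's bound for curves, and control the bad slices and the exceptional locus of the projection by B\'ezout-type degree estimates. Your identification of the hypothesis $q>2(r+1)\ell^2$ as the threshold guaranteeing enough $\mathbb{F}_q$-rational generic choices, and of the constant $5\ell^{13/3}$ as arising from bookkeeping across several lower-order error sources, is also correct in spirit. If you intended this as a summary of the cited proof rather than a self-contained argument, it is adequate; if you intended it as a full proof, the explicit optimization leading to the exponent $13/3$ would need to be carried out in detail, which is the bulk of the work in the cited paper.
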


\begin{lemma}\label{lem-degreeBound}
Suppose $X\subseteq \Aa^n$ is a difference variety defined over a finitely generated difference domain $D$. Then there is a constant $c$ depending only on the complexity of $X$, such that $\widetilde{deg}(M_q(X^\eta))\leq q^c$ for any Frobenius reduction $\eta:D\to K_q$. In particular, the degree of any irreducible component of $M_q(X^\eta)$ and the number of irreducible components of $M_q(X^\eta)$ are bounded by $q^c$.
\end{lemma}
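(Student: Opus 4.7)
The plan is a direct degree-theoretic computation combined with Bézout (Fact~\ref{bezoutineq}) and Remark~\ref{rmk: degree}. Let $P_1,\ldots,P_m\in D[x_1,\ldots,x_n]_\sigma$ be the fixed defining polynomials of $X$ witnessing its complexity. First I would bound the usual (algebraic) degree of $M_q(\eta(P_i))$ purely in terms of the complexity of $P_i$ and of $q$. Writing $P_i=\sum_j a_{i,j}\prod_k x_k^{\mu_{i,j,k}}$ with $\mu_{i,j,k}\in\Nn[\sigma]$, each monomial $x_k^{\mu}$ with $\mu=\sum_\ell n_\ell\sigma^\ell$ becomes, after applying $\eta$ and $M_q$, the monomial $x_k^{\sum_\ell n_\ell q^\ell}$. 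Thus the algebraic degree of the resulting polynomial is at most a sum of terms of the form $\sum_\ell n_\ell q^\ell$, and every $n_\ell$ as well as the maximal $\ell$ appearing, the number of monomials, and $m$ itself are all bounded by the complexity of $X$. Hence there is a constant $c_0$ depending only on the complexity such that $\deg(M_q(\eta(P_i)))\leq q^{c_0}$ for all $i$ and all $q$.

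Next, $M_q(X^\eta)=\bigcap_{i=1}^m V(M_q(\eta(P_i)))$ in $\Aa^n$. By Remark~\ref{rmk: degree}, each hypersurface $V(M_q(\eta(P_i)))$ has $\widetilde{\deg}\leq q^{c_0}$. Applying Fact~\ref{bezoutineq} repeatedly, I would conclude
\[
\widetilde{\deg}(M_q(X^\eta))\leq \prod_{i=1}^m \widetilde{\deg}(V(M_q(\eta(P_i))))\leq q^{mc_0}.
\]
Setting $c:=mc_0$, which depends only on the complexity of $X$, yields the main bound.

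For the ``in particular'' clause: by the definition of $\widetilde{\deg}$ as a sum over absolutely irreducible components with strictly positive summands (every irreducible variety has $\widetilde{\deg}\geq 1$), both the number of irreducible components of $M_q(X^\eta)$ and the degree of each individual component are bounded above by $\widetilde{\deg}(M_q(X^\eta))\leq q^c$. No obstacle is anticipated; the argument is essentially bookkeeping plus Bézout, and the only subtle point is to make sure the constant $c$ is extracted purely from the complexity of $X$ (i.e.\ the lengths of the $P_i$ as $\mathcal{L}_{\sigma,D}$-formulas), which is automatic because all of $m$, the number of monomials in each $P_i$, the $\sigma$-orders, and the exponent coefficients are already encoded in those lengths.
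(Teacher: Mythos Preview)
Your proposal is correct and follows essentially the same approach as the paper: bound the algebraic degree of each $M_q(\eta(P_i))$ by $q^{c_0}$ using the complexity data, then apply Remark~\ref{rmk: degree} and Fact~\ref{bezoutineq} to get $\widetilde{\deg}(M_q(X^\eta))\leq\prod_i\deg M_q(\eta(P_i))\leq q^c$, and read off the ``in particular'' clause from the definition of $\widetilde{\deg}$ as a sum over irreducible components. Your write-up is in fact more explicit than the paper's, which simply asserts the degree bound ``in terms of the order of the $f_i$'s and of $q$'' without unpacking the monomial computation.
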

\begin{proof}
Let $I$ be the defining difference ideal of $X$, generated by $f_1,...,f_m$. Then $M_q(X^\eta)$ is defined by $\eta_q(f_1),...,\eta_q(f_m)$, where $\eta_q(f)$ is the (algebraic) polynomial obtained from $\eta(f)$ where one replaces all occurrences of $\sigma$ with $\frob_q$. Note that there is a bound on the degree of the polynomials $\eta_q(f_1),...,\eta_q(f_m)$ in terms of the order of the $f_i$'s and of $q$. Thus by Remark~\ref{rmk: degree} and Fact~\ref{bezoutineq}, \[\widetilde{\deg}(M_q(X^\eta))\leq\prod_{1\leq i\leq m}\deg\eta_q(f_i)\leq q^c\] for some $c$ only depending on the complexity of $\{f_1,\ldots,f_m\}$. The in particular part follows by the definition of degree for non-irreducible varieties.
\end{proof}

The following fact is an easy consequence of the Jacobian criterion.

\begin{fact}\label{lem-JacobianCriterion}
Suppose $X\subseteq\Aa^n$ is an algebraic variety of dimension $d>0$ defined by a set of polynomials $\{P_i(x_1,\ldots,x_n):i\leq m\}\subseteq K[x_1,\ldots,x_n]$ for some $m\geq n-d$. Suppose $b\in X(K)$ is such that the Jacobian $(\frac{\partial P_i}{\partial x_j}(b))_{i,j}$ has rank at least $n-d$, and $b$ is contained in an irreducible component $Y$ over $K$ of $X$ of algebraic dimension $d$, then $b$ is a smooth point of $X$.
\end{fact}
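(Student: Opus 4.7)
The plan is to deduce the statement directly from the standard Jacobian criterion for smoothness (regularity of the local ring), applied in a way that exploits the hypothesis on the existence of a top-dimensional component through $b$. The key point is that the Jacobian rank is controlled above by the local codimension, so the hypothesis $\mathrm{rank}\geq n-d$ combined with $\dim_b X = d$ forces equality.

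First, I would observe that $\dim_b X = d$. Indeed, since $b$ lies in the irreducible component $Y$ of algebraic dimension $d$, we have $\dim_b X \geq \dim_b Y = d$, while by assumption every irreducible component of $X$ has dimension at most $d$ (since $\dim X = d$), so $\dim_b X \leq d$. Hence $\dim_b X = d$. Next I would recall the description of the Zariski tangent space at $b$: if $X$ is cut out by $P_1, \ldots, P_m \in K[x_1, \ldots, x_n]$, then $T_b X = \ker J(b)$, where $J(b) = \left(\partial P_i/\partial x_j(b)\right)_{i,j}$. Thus $\dim T_b X = n - \mathrm{rank}\, J(b)$.

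The standard inequality $\dim T_b X \geq \dim_b X$ (which holds for any point of any affine scheme) then gives $\mathrm{rank}\, J(b) \leq n - d$. Combining with the hypothesis $\mathrm{rank}\, J(b) \geq n-d$, I conclude $\mathrm{rank}\, J(b) = n-d$, hence $\dim T_b X = d = \dim_b X$. By the Jacobian criterion, this equality is precisely the condition that the local ring $\mathcal{O}_{X,b}$ is regular, i.e.\ that $b$ is a smooth point of $X$.

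There is no real obstacle: the only subtlety worth noting is that the defining polynomials $P_1, \ldots, P_m$ need not generate a radical ideal (in line with the paper's convention that algebraic varieties are neither required to be irreducible nor reduced), but the Jacobian criterion in the form above holds at the level of schemes, and smoothness at $b$ is equivalent to regularity of $\mathcal{O}_{X,b}$, which is what the tangent space dimension computation controls.
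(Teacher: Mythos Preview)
Your argument is correct and is precisely the standard deduction from the Jacobian criterion that the paper has in mind; the paper itself does not give a proof, merely stating that the fact ``is an easy consequence of the Jacobian criterion.'' Your remark about the non-reduced case is apt but harmless: regularity of $\mathcal{O}_{X,b}$ forces it to be a domain, hence reduced at $b$.
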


\begin{theorem}\label{thm-finiteryCount}
Let $X\subseteq\Aa^n_D$ be a difference variety of transformal dimension $d>0$ defined over a difference domain $D=\mathbb{\Zz}[b]_\sigma$ for some $b=(b_1,\ldots,b_m)$, such that for any homomorphism $\eta:D\to (K,\sigma)$, $\tdim(X^\eta)\leq d$. Then there is a quantifier-free $\mathcal{L}_\sigma$-formula $\psi_{\xi}(x,y)$ with $x=(x_1,\ldots,x_n), y=(y_1,\cdots,y_m)$ and constants $c,C>0$ (depending only on the complexity of $X$) such that for all $q>C$, for any Frobenius reduction $\eta:D\to K_q$, $\psi_{\xi}(x;\eta(b))$ defines a difference subvariety $X_{\xi}^\eta$ of $X^\eta$ of transformal dimension $<d$ such that for all $t\in\mathbb{N}^{>0}$ with $\eta(D)\subseteq \mathbb{F}_{p^t}$.
    \begin{enumerate}
    \item 
    Either there is a point $a\in (X^\eta\setminus X^\eta_{\xi})(\mathbb{F}_{p^t},\frob_q)$, and we have \[|X^\eta(\mathbb{F}_{p^t},\frob_q)|\geq p^{dt}-q^cp^{t(d-1/2)};\]
    \item 
    Or $X^\eta(\mathbb{F}_{p^t},\frob_q)\subseteq X^\eta_{\xi}(\mathbb{F}_{p^t},\frob_q)$, and $|X^\eta(\mathbb{F}_{p^t},\frob_q)|\leq q^cp^{t(d-1)}$.
\end{enumerate}
\end{theorem}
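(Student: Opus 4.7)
The plan is to combine the uniform equidimensionality theorem (Theorem~\ref{thm-equiDim}), the twisting stratification (Theorem~\ref{thm-pi-twist-uniform}), the difference Jacobian criterion (Fact~\ref{lem-JacobianCriterion}), the degree bound (Lemma~\ref{lem-degreeBound}), and the sharpened Lang--Weil estimate (Fact~\ref{lem-counting}). After applying Lemma~\ref{lem:local} if necessary, we may assume that no application below causes the transformal dimension to rise under reduction. First I apply Theorem~\ref{thm-equiDim} to extract a special subvariety $X_s \subseteq X$ of transformal dimension $<d$ such that, for $q$ large, $M_q((X \setminus X_s)^\eta)$ is equidimensional of algebraic dimension $d$. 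Next I apply Theorem~\ref{thm-pi-twist-uniform} to stratify $X$ into locally closed pieces $X_i$ ($i \leq N$) of transformal dimension $d_i$, together with twist-birational homeomorphisms $\theta_i \colon X_i \to Y_i \subseteq \Aa^{d_i+1}$, where each $Y_i$ is cut out in a suitable $\tau_i$-algebra by a polynomial $Q_i$ with nowhere-vanishing partial derivative $\partial Q_i / \partial t_1$ on $Y_i$, and the residue has transformal dimension $0$. I then set
\[
X_\xi := X_s \;\cup\; \bigcup_{d_i < d} X_i \;\cup\; \Bigl(X \setminus \bigcup_{i \leq N} X_i\Bigr),
\]
so that $\tdim(X_\xi) < d$ and $X \setminus X_\xi = \bigsqcup_{d_i = d}(X_i \setminus X_s)$. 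Remark~\ref{rem: uniformstratification} and the uniformity built into Theorem~\ref{thm-equiDim} guarantee that $X_\xi$ is defined by a single quantifier-free formula $\psi_\xi(x;y)$ whose complexity depends only on that of $X$.

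For case (1), suppose $a \in (X^\eta \setminus X_\xi^\eta)(\Ff_{p^t},\frob_q)$, so $a$ lies in some top-dimensional stratum $X_i^\eta \setminus X_s^\eta$ with $d_i = d$. Since $(\Ff_{p^t},\frob_q)$ is perfect inversive and the twist $\tau_i = \sigma \circ \frob_p^{\ell_i}$ is realized on it by $\frob_{q'}$ with $q' := q p^{\ell_i}$ (a positive integer power of $p$ for $q > C$), the twist-birational homeomorphism $\theta_i$ carries $a$ to an $\Ff_{p^t}$-rational point $b$ of $M_{q'}(Y_i^\eta) \subseteq \Aa^{d+1}_{\Ff_{p^t}}$. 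The non-vanishing of $\partial Q_i / \partial t_1$ at $b$, together with Fact~\ref{lem-JacobianCriterion}, makes $b$ a smooth $\Ff_{p^t}$-point of the ambient algebraic hypersurface $H$ cut out by the Frobenius reduction of $Q_i$; the $\Ff_{p^t}$-irreducible component $W$ of $H$ through $b$ is then absolutely irreducible (by the standard Galois argument that a smooth rational point lies on a unique absolutely irreducible component), has algebraic dimension $d$, and has degree at most $q^{c_0}$ by Lemma~\ref{lem-degreeBound}. Fact~\ref{lem-counting} applied to $W$, valid once $p^t > 2(d+1) q^{2c_0}$, yields
\[
|W(\Ff_{p^t})| \geq p^{td} - q^{2c_0}\, p^{t(d-1/2)} - 5\, q^{13c_0/3}\, p^{t(d-1)},
\]
which absorbs into $p^{td} - q^c p^{t(d-1/2)}$ for suitable $c$; when the Lang--Weil hypothesis fails, the target bound is trivially non-positive. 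A routine boundary correction, using that $W \setminus M_{q'}(Y_i^\eta)$ sits inside an algebraic subvariety of dimension $<d$ and so contributes at most $O(q^c p^{t(d-1)})$ rational points by the trivial bound, then transfers the estimate to $|X^\eta(\Ff_{p^t},\frob_q)|$.

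For case (2), if $X^\eta(\Ff_{p^t},\frob_q) \subseteq X_\xi^\eta(\Ff_{p^t},\frob_q)$, Lemma~\ref{L:Trasnf-AlgDim} gives $\dim M_q(X_\xi^\eta) = \tdim(X_\xi^\eta) \leq d-1$ for $q$ large. Combining the trivial upper bound $|V(\Ff_{p^t})| \leq \widetilde{\deg}(V)\, p^{t \dim V}$ with Lemma~\ref{lem-degreeBound} applied to $X_\xi^\eta$ yields $|X^\eta(\Ff_{p^t},\frob_q)| \leq q^c p^{t(d-1)}$, as desired. The genuine obstacle in executing this plan is not any single step but the threading of uniformity: every object produced along the way — the subvariety $X_\xi$ (hence $\psi_\xi$), the strata and their twist-birational targets, the polynomials $Q_i$ and the twists $\tau_i$, and the constants $c, C$ — must depend only on the complexity of $X$, not on the ambient domain $D$. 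This is precisely what the uniformity outputs of Theorems~\ref{thm-equiDim} and~\ref{thm-pi-twist-uniform}, reinforced by Remark~\ref{rem: uniformstratification} and Lemma~\ref{lem-degreeBound}, are designed to furnish, and it is these outputs that allow us to bolt on the classical Lang--Weil count.
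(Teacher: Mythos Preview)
Your overall architecture matches the paper's: combine the twisting stratification (Theorem~\ref{thm-pi-twist-uniform}), equidimensionality (Theorem~\ref{thm-equiDim}), the Jacobian criterion, a degree bound, and the sharpened Lang--Weil. But there is a genuine gap in how you deploy equidimensionality, and it breaks your ``boundary correction'' step.

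You apply Theorem~\ref{thm-equiDim} \emph{once}, to $X$, and obtain a single $X_s$. The paper instead applies it separately to each closed variety $X\setminus\bigcup_{j<i}X_j$ for every top-dimensional stratum $i\leq k$, obtaining a special subvariety $X_{i,s}$ for each, and sets $X_\xi:=X'\cup\bigcup_{i\leq k}X_{i,s}$. The distinction matters whenever $k>0$ (which happens as soon as $X$ has more than one top-dimensional component): for $i>0$ the stratum $X_i$ is open in $X\setminus\bigcup_{j<i}X_j$ but \emph{not} open in $X$, so equidimensionality of $M_q((X\setminus X_s)^\eta)$ does not propagate to $M_q((X_i\setminus X_s)^\eta)$. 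Concretely, there can be points $a\in X_i\setminus X_s$ lying in the special locus of $X\setminus\bigcup_{j<i}X_j$; at such $a$ the local algebraic dimension of $M_q(X_i^\eta)$, hence of $M_{q'}(Y_i^\eta)$ at $b=\theta_i(a)$, may drop below $d$. In that situation your hypersurface component $W$ through $b$ is still $d$-dimensional, but there is no reason for $W$ to sit inside $M_{q'}(\overline{Y_i}^\eta)$, and your claim that $W\setminus M_{q'}(Y_i^\eta)$ lies in a variety of dimension $<d$ fails. The fix is exactly what the paper does: enlarge $X_\xi$ by the stratum-wise special loci $X_{i,s}$, so that any $a\notin X_\xi$ automatically sits at a point where $M_q(X_i^\eta)$ has local dimension $d$.

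A related difference of route: the paper never passes to the ambient hypersurface $H$. It takes $W$ to be the $\Ff_{p^t}$-irreducible component of $M_q(X_i^\eta\setminus X_\xi^\eta)$ through $a$, uses the (stratum-wise) equidimensionality to get $\dim W=d$, pushes $W$ to $\beta(W)\subseteq M_{q'}(Y_i^\eta)$ via the twist-birational homeomorphism, invokes the Jacobian criterion with $Q_i$ among the defining polynomials of $\beta(W)$ to see $\beta(a)$ is smooth on $\beta(W)$, and concludes $\beta(W)$ is absolutely irreducible. Lang--Weil is then applied to $\beta(W)$, and since $\beta(W)\subseteq M_{q'}(Y_i^\eta)$ by construction, no boundary correction is needed at all. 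Your hypersurface route can be repaired once $X_\xi$ is defined correctly, but the paper's route is cleaner.

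Finally, a smaller omission: both Theorem~\ref{thm-equiDim} and Theorem~\ref{thm-pi-twist-uniform} only deliver their conclusions over a localization $D'=D[1/f]_\sigma$, whereas the statement demands all $\eta:D\to K_q$. The paper closes this by Noetherian induction on the perfect difference ideals of $D$; you should say so explicitly rather than absorbing it into ``applying Lemma~\ref{lem:local} if necessary''.
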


\begin{remark}\label{rem-upperBound}
Note that $X^\eta(\mathbb{F}_{p^t},\frob_q)=M_q(X^\eta)(\mathbb{F}_{p^t})$ for any $X$ and Frobenius reduction $\eta$ to $K_q$. For $q$ large enough, we have a trivial upper bound for the number of rational points of $X^\eta$ in $(\mathbb{F}_{p^t},\frob_q)$ without the assumption of existence of the point $a$, namely  \[|X^\eta(\mathbb{F}_{p^t},\frob_q)|\leq q^{c_0}p^{dt},\] for some constant $c_0$ not depending on $q$ and $t$. Indeed, by Lemma~\ref{lem-degreeBound}, the degree of $M_q(X^\eta)$ is bounded by $q^{c_0}$ for some $c_0$. Also the dimension of $M_q(X^\eta)$ is bounded by $d$ for $q$ large enough, hence $|M_q(X^\eta)(\mathbb{F}_{p^t})|\leq q^{c_0}p^{dt}$ by \cite[Corollary 2.2]{Lachaud}.
And this upper bound cannot be essentially improved, since one could take $X\subseteq\Aa^2$ defined by $\sigma(x)=x$, then \[|X^\eta(\mathbb{F}_{p^t},\frob_q)|= qp^{t},\] for any Frobenius reduction to $K_q$ and $\Ff_{p^t}\supseteq\Ff_q$. 
\end{remark}

\begin{proof}
We will first prove the statement for some $D'=D[1/f]_\sigma\supseteq D$ with $f=f(b)\in D\setminus\{0\}$. 
By Noetherian induction, the result will follow.  

Let $D''=D[1/g]_\sigma$, $d=d_0\geq d_1\geq\ldots \geq d_N>0$ and $(X_i)_{i\leq N}$ with each $X_i$ open subvariety of the closed variety $X\setminus \bigcup_{j<i}X_j$ of transformal dimension $d_i$ be given as in Theorem~\ref{thm-pi-twist-uniform}. Let $k\leq N$ be the largest with $d_k=d$. Let $X':=X\setminus \bigcup_{i\leq k}X_i$. Then $X'$ is a closed subvariety of $X$ of transformal dimension $<d$. For each $i=\{0,\ldots,k\}$, let $X_{i,s}$ and $D_{i,s}=D[1/f_i]_\sigma$ be the special subvariety and special domain of $X\setminus \bigcup_{j<i}X_j$ as given by Theorem~\ref{thm-equiDim}, note that $X_i\setminus X_{i,s}$ is still Frobenius equidimensional, since if $V$ is an absolutely irreducible component of $X_i\setminus X_{i,s}$, then $V$ has dimension $d$ and $X_i\cap V$ is either empty or of dimension $d$ (as $X_i$ is open). By construction, $\tdim_D(X_{i,s})<d$ for all $i\leq k$. Let $X_\xi:=X'\cup\bigcup_{i\leq k}X_{i,s}$. Clearly, $\tdim_D(X_\xi)<d$. Note that the complexity of $X_\xi$ depends only on the complexity of $X$ by Remark~\ref{rem: uniformstratification}. 

Let $D'=D[1/f]_\sigma\supseteq D[1/(g\prod_i f_i)]_\sigma$, such that $\tdim(X_\xi^\eta)<d$ for any homomorphism $\eta:D'\to K$.
There is some $C$ such that if $q>C$, then $M_q(X^\eta_{\xi})$ has algebraic dimension $<d$ for any Frobenius reduction $\eta:D'\to K_q$. Now assume that $\eta(D')\subseteq \mathbb{F}_{p^t}$. Note that if $M_q(X^\eta)(\mathbb{F}_{p^t})\subseteq M_q(X^\eta_{\xi})(\mathbb{F}_{p^t})$, then $|M_q(X^\eta)(\mathbb{F}_{p^t})|\leq q^cp^{t(d-1)}$ for some constant $c$, since $M_q(X^\eta_{\xi})$ has algebraic dimension $<d$. Moreover, $c$ only depends on the complexity of $X$, since it is determined by the complexity of $X_\xi$.

If there exists $a\in M_q(X^\eta\setminus X^\eta_{\xi})(\mathbb{F}_{p^t})$, then $a\in M_q(X_i^\eta\setminus X^\eta_{\xi})$ for some $i\leq k$. Let $F_{t,q}:=(\mathbb{F}_{p^t},\frob_q)$. Then $\eta:D'\to F_{t,q}$ by the assumption that $\eta(D')\subseteq \mathbb{F}_{p^t}$.

By Theorem~\ref{thm-pi-twist-uniform}, there exist a twist $\tau_i$ of $\sigma$ and locally closed $\tau_i$-subvariety $Y_i^\eta$ of $\Aa^{d+1}_{F_{t, q'}}$ where $F_{t, q'}:=(\mathbb{F}_{p^t},\frob_{q'})$ and $\frob_{q'}$ is the $\tau_i$ twist of $\frob_q$, such that $X_i^\eta(F_{t, q})$ and $Y_i^\eta(F_{t,q'})$ are twist-birational homeomorphic by some function $\beta$. 
Note that $\beta$ can be extended to a twist-birational homeomorphism from $X_i^\eta(K_{q})$ to $Y_i^\eta(K_{q'})$ (by the fact that the same formula defines these two homeomorphisms in Theorem~\ref{thm-pi-twist-uniform}). 
By the same proposition, there is also a non-zero polynomial $Q\in F_{t,q'}[t_1,\ldots,t_{d+1}]_{\tau_i}$ such that $Q$ vanishes on $Y_i^\eta$ and $(\partial Q/\partial t_1)(\beta(a))\neq 0$.

Since both $\frob_q$ and $\frob_{q'}$ are definable in $\mathcal{L}_{ring}$,
and $X_i^\eta(K_q)=M_q(X_i^\eta)(\mathbb{F}_p^{alg})$, $Y_i^\eta(K_{q'})=M_{q'}(Y_i^\eta)(\mathbb{F}_p^{alg})$, 
we get that $\beta$ is a $\mathcal{L}_{ring}$-definable homeomorphism between $M_q(X_i^\eta)(\mathbb{F}_p^{alg})$ and $M_{q'}(Y_i^\eta)(\mathbb{F}_p^{alg})$ which restrict to a bijection between $M_q(X_i^\eta)(\mathbb{F}_{p^t})$ to $M_{q'}(Y_i^\eta)(\mathbb{F}_{p^t})$. 
Now it is enough to prove that \[|M_{q'}(Y_i^\eta)(\mathbb{F}_{p^t})|\geq p^{dt}-q^cp^{t(d-1/2)},\] for some constant $c$. 

By assumption, $a\in M_q(X_i^\eta\setminus X_\xi^\eta)(\mathbb{F}_{p^t})$. Break $M_q(X_i^\eta\setminus X_\xi^\eta)$ into irreducible (locally closed) components over $\mathbb{F}_{p^t}$ of the form $W':=W\setminus M_q(X_\xi^\eta)$ where $W$ is an irreducible subvariety of $M_q(X_i^\eta)$ over $\mathbb{F}_{p^t}$.  Then $a$ is in some $W'=W\setminus M_q(X_\xi^\eta)$. By Corollary~\ref{thm-equiDim}, $W$ contains an absolutely irreducible subvariety of algebraic dimension $d$ (as $X_{i,s}^{\eta}$ is a subvariety of $X_\xi^\eta$) and hence is of algebraic dimension $d$. Now $\beta(W)$ is an irreducible subvariety of $M_{q'}(Y_i^\eta)$ over $\mathbb{F}_{p^t}$. Recall that $Q$ is a $\tau_i$-difference polynomial which vanishes on $Y_i$. Let $\{P_1:=Q_{q'},\ldots,P_m\}$ be a set of defining (algebraic) polynomials for $\beta(W)$, where $Q_{q'}$ is the algebraic polynomial obtained by replacing $\tau_i$ in $Q$ by $\frob_{q'}$.

As $(\partial Q/\partial t_1)(\beta(a))\neq 0$ and by the definition of partial derivative for difference equations, we get $(\partial Q_{q'}/\partial t_1)(\beta(a))=(\partial Q/\partial t_1)(\beta(a))\neq 0$. Hence, the Jacobian $(\frac{\partial P_i}{\partial t_j}(\beta(a))_{i,j}$ has rank at least $1$. By Fact~\ref{lem-JacobianCriterion}, $\beta(a)$ is a smooth point of $\beta(W)$. Now $\beta(W)$ is an irreducible variety over $\mathbb{F}_{p^t}$ which contains a smooth point in $\mathbb{F}_{p^t}$, hence is absolutely irreducible.

Since $\beta(W)$ is an absolutely irreducible component of $M_{q'}(Y_i^\eta)$, by Lemma~\ref{lem-degreeBound} there is $c_0$ depending only on the complexity of $Y_i^\eta$ such that $\deg(\beta(W))\leq q^{c_0}$. This holds indeed since $q$ and $q'$ are bounded multiples of each other. Note that the complexity of $Y_i^\eta$ only depends on that of $X$ by Remark~\ref{rem: uniformstratification}. By Fact~\ref{lem-counting}, if $p^t>2(d+1)q^{2c_0}$, then 
$$||\beta(W)(\mathbb{F}_{p^t})|-p^{td}|\leq(q^{c_0}-1)(q^{c_0}-2)p^{td-\frac{t}{2}}+5q^{\frac{13c_0}{3}}p^{td-t}.$$

We may choose $c$ large enough (only depending on $c_0$) so that we have
\[|M_{q'}(Y_i^\eta)(\mathbb{F}_{p^t})|\geq |(\beta(W)(\mathbb{F}_{p^t})|\geq p^{td}-q^{c}p^{t(d-\frac{1}{2})}. \qedhere \]
\end{proof}

\section{Coarse dimension in pseudofinite difference fields}\label{sec:psdf}
In this section, we give an application of our main result to pseudofinite difference fields. In \cite{ZouDifference}, TZ studied the model theory of a family of ultraproducts of finite difference fields. It was shown that these structures are not model-theoretically tame, they have TP$_2$, the strict order property, and are not decidable. However, it was observed that the non-standard counting size of definable sets in these structures gives rise to an integer-valued dimension. It was conjectured in \cite[Conjecture 3.1]{ZouDifference} that this dimension should coincide with the transformal dimension for a quantifier-free definable set. In this section, we will prove this conjecture and extend it to existentially definable sets. Moreover, we give an example where the two dimensions do not coincide for $\forall\exists$-definable sets in characteristic $p>0$.

\begin{definition}
Let $DF(p,n,m)$ be the difference fields with $p^n$-many elements and the automorphism $\sigma(t):=t^{p^m}$, namely $DF(p,n,m):=(\mathbb{F}_{p^n},\frob_{p^m})$.
\end{definition}
\begin{remark}
Note that $DF(p,n,m)=DF(p,n,m')$ where $m'=m \mod n$, hence we may assume $n>m$ for any structure $DF(p,n,m)$. Moreover, all finite difference fields are of this form.
\end{remark}

\begin{definition}
Let $M=\prod_{i\to\mathcal{U}}M_i$ be an ultraproduct of some finite $\mathcal{L}$-structures $M_i$ for some non-principal ultrafilter $\mathcal{U}$. Let $\alpha=(\alpha_i)_{i\to\mathcal{U}}\in\mathbb{R}^{\mathcal{U}}$ with $\alpha>\mathbb{R}$. Let the \emph{coarse dimension with respect to $\alpha$} be the function $\pmb{\delta}_\alpha:\mathcal{D}(M)\to\mathbb{R}^{\geq 0}\cup\{\pm\infty\}$ on all definable sets $\mathcal{D}(M)$ in $M$, defined as \[\pmb{\delta}_\alpha(\phi(M)):=st.\frac{\log|\phi(M)|}{\log \alpha}:=\lim_{i\to \mathcal{U}}\frac{\log |\phi(M_i)|}{\log \alpha_i},\] where we set $\log 0=-\infty$.
\end{definition}

In the following, we will fix an  ultraproduct $K=(K,\sigma):=\prod_{i\to\mathcal{U}}DF(p_i,n_i,m_i)$ over some non-principal ultrafilter $\mathcal{U}$) with $\lim_{i\to\mathcal{U}}m_i/n_i=0$ and $\lim_{i\to\mathcal{U}}p_i^{m_i}=\infty$. Let $\pmb{\delta}:=\pmb{\delta}_{\alpha}$ where $\alpha=(p_i^{n_i})_{i\to\mathcal{U}}$. 
Note that this is a more general class than the pseudofinite difference fields considered in~\cite{ZouDifference}, where one imposes strict conditions on the rate of convergence of $\lim_{i\in \mathcal{U}}n_i/m_i=0$.

\begin{lemma} \label{lem-deltaCount}
    Suppose $X\subseteq \Aa^n$ is a difference variety defined over some difference subfield $F\leq K$ such that $\tdim_F(X)=d>0$. 
Then there is a difference subvariety $X_\xi$ defined over $F$ of transformal dimension $<d$, such that if $(X\setminus X_{\xi})(K)\neq \emptyset$, then $\pmb{\delta}(X(K))=d$.
\end{lemma}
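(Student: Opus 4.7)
The plan is to apply Theorem~\ref{thm-finiteryCount} (Theorem B) coordinatewise across the ultraproduct factors and then compute $\pmb\delta$ as a two-sided limit, using the growth condition $\lim_{i\to\mathcal{U}} m_i/n_i = 0$. First I would set up a finitely generated base: since $X$ is defined over $F \leq K$, its defining polynomials have coefficients in a finitely generated difference subdomain $D \leq F$, and by Lemma~\ref{lem:local} I can enlarge $D$ inside $F$ so that $\tdim_L(X^\eta) \leq d$ for every homomorphism $\eta : D \to (L,\sigma)$, which is the hypothesis of Theorem~\ref{thm-finiteryCount}. Applying Theorem B produces a difference subvariety $X_\xi \subseteq X$ defined over $D$ (hence over $F$) with $\tdim_D(X_\xi) < d$, together with constants $c, C > 0$ realizing the dichotomy for every Frobenius reduction $\eta : D \to K_q$ with $q > C$ and $\eta(D) \subseteq \Ff_{p^t}$.

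Next I would unfold the ultraproduct. Elements of $D$ are represented by tuples $(a_i)_{i\to\mathcal{U}}$, so for ultrafilter-many $i$ the coordinate projections furnish Frobenius reductions $\eta_i : D \to K_i = (\Ff_{p_i^{n_i}}, \frob_{p_i^{m_i}})$ with $\eta_i(D) \subseteq \Ff_{p_i^{n_i}}$ and $X(K) = \prod_{i\to\mathcal{U}} X^{\eta_i}(K_i)$. The assumption $\lim p_i^{m_i} = \infty$ gives $q_i := p_i^{m_i} > C$ ultrafilter-eventually. Combining the hypothesis $(X \setminus X_\xi)(K) \neq \emptyset$ with \L o\'s's theorem places us in case~(1) of Theorem~\ref{thm-finiteryCount} for ultrafilter-many $i$, so
\[
|X^{\eta_i}(K_i)| \;\geq\; p_i^{d n_i} - q_i^{c}\, p_i^{n_i(d - 1/2)} \;=\; p_i^{d n_i}\bigl(1 - p_i^{c m_i - n_i/2}\bigr),
\]
while Remark~\ref{rem-upperBound} supplies the matching upper bound $|X^{\eta_i}(K_i)| \leq q_i^{c_0}\, p_i^{d n_i}$ with some constant $c_0$ independent of $i$.

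Finally, dividing $\log |X^{\eta_i}(K_i)|$ by $\log p_i^{n_i} = n_i \log p_i$ and passing to the ultralimit, the upper estimate contributes $c_0\, m_i/n_i + d$, which tends to $d$ since $m_i/n_i \to 0$. For the lower estimate, the same growth hypothesis together with $n_i \log p_i \to \infty$ forces $c m_i - n_i/2 \to -\infty$ fast enough that $p_i^{c m_i - n_i/2} \to 0$, so the logarithmic correction coming from the factor $1 - p_i^{c m_i - n_i/2}$ is absorbed and the lower contribution is also $d$. Hence $\pmb\delta(X(K)) = d$. I do not foresee any genuine obstacle beyond the bookkeeping of replacing $F$ by the finitely generated $D$ and identifying its coordinate images $\eta_i$; all substantive geometric and counting content is already encoded in Theorem~B and Remark~\ref{rem-upperBound}.
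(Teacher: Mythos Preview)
Your proposal is correct and follows essentially the same route as the paper: reduce to a finitely generated base, apply Theorem~\ref{thm-finiteryCount} to obtain $X_\xi$ and the dichotomy, push the point $a\in(X\setminus X_\xi)(K)$ and the coefficients of $D$ down to ultrafilter-many coordinates to land in case~(1), and compute the limit using $m_i/n_i\to 0$ (the upper bound being Remark~\ref{rem-upperBound}). The one place where the paper is slightly more explicit is the ``bookkeeping'' you flag: to justify that the coordinate projections of the generators of $D$ give well-defined difference ring homomorphisms $\eta_i:D\to DF(p_i,n_i,m_i)$ for ultrafilter-many $i$, the paper invokes the Ritt--Raudenbush basis theorem (Fact~\ref{RittRaud}) so that the ideal of relations among the generators is finitely generated as a perfect difference ideal, and hence all relations hold coordinatewise on a set in $\mathcal{U}$.
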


\begin{remark}\label{remk-delta<=}
Note that we have the natural upper bound $\pmb{\delta}(X(K))\leq d$. Indeed, by Remark~\ref{rem-upperBound}, $|X(DF(p_i,n_i,m_i))|\leq p_i^{cm_i+dn_i}$ for some constant $c$ whenever $p_i^{m_i}$ is large enough. Hence, $\log{|X(DF(p_i,n_i,m_i))|}/\log{p_i^{n_i}}\leq(cm_i+dn_i)/n_i$ and $\pmb{\delta}{(X(K))}\leq \lim_{i\to\mathcal{U}}(cm_i+dn_i)/n_i=d$.
\end{remark}
\begin{proof}
  By the remark above, we only need to show that $\pmb{\delta}(X(K))\geq d$. 
  By Theorem~\ref{thm-finiteryCount}, there is a finitely generated difference domain $D'=\mathbb{Z}[a_1,\ldots,a_m]_{\sigma}\subseteq F$ and a difference subvariety $X_\xi$ of transformal dimension $<d$ defined over $D'$ (hence also over $F$), such that for some constants $c$ and $C$, for every Frobenius reduction $\eta:D'\to K_q$ with $q>C$, for all $t$ with $\eta(D')\subseteq \mathbb{F}_{p^t}$, if there is a point $x\in M_q(X^\eta\setminus X_\xi^\eta)(\mathbb{F}_{p^t})$, then \[|X^\eta(\mathbb{F}_{p^t},\frob_q)|\geq p^{dt}-q^cp^{t(d-1/2)}.\]

  Let $\bar{a}=(a_1,\ldots,a_m)$. Suppose $\bar{a}=(\bar{a}_i)_{i\to\mathcal{U}}$. Let $I\leq \mathbb{Z}[X_1,\ldots,X_m]_\sigma$ be the ideal of difference polynomials $P(X)$ with $P(\bar{a})=0$ in $K$. By Fact~\ref{RittRaud}, $I$ is finitely generated as a prime difference ideal. Fix a finite set of difference polynomials $(P_j)_{j\leq N}$ generating $I$, i.e., whenever $J\leq \mathbb{Z}[X_1,\ldots,X_m]_\sigma$ is a prime difference ideal with $P_j\in J$ for all $j\leq N$, then $I\subseteq J$. For ultrafilter-many $i$, $P_j(\bar{a}_i)=0$ holds in $D(p_i,n_i,m_i)$ for all $j\leq N$, hence there is a homomorphism $\eta_i:D'\to DF(p_i,n_i,m_i)$ by sending $\bar{a}$ to $\bar{a}_i$. This gives rise to a Frobenius reduction $\eta_i:D'\to K_{p_i^{m_i}}$ with $\eta_i(D')\subseteq DF(p_i,n_i,m_i)$. Let $x=(x_i)_{i\to\mathcal{U}}$ and $q_i:=p_i^{m_i}$. By assumption, $x_i\in M_{q_i}(X^{\eta_i}\setminus X_\xi^{\eta_i})(\mathbb{F}_{p_i^{n_i}})$ for ultrafilter-many $i$. Therefore, \[|X(DF(p_i,n_i,m_i))|\geq p_i^{dn_i}-p_i^{cm_i}p_i^{n_i(d-1/2)}>\frac{1}{2}p_i^{dn_i},\] for ultrafilter-many $i$. Hence, $\pmb{\delta}(X(K))\geq d$ as desired. 
\end{proof}

\begin{definition}
For a partial type $\pi$ (closed under conjunction) over $K$, we set $\pmb{\delta}(\pi):=\inf\pmb{\{\delta}(\phi(K))\mid \phi\in\pi\}$.
\end{definition}

\begin{theorem}\label{thm-main2}
 Let $A$ be a set of parameters in $K$. Suppose $p(x)=\qftp(a/A)$ where $a$ is a tuple in $K$ with $\mbox{trf.deg}(a/A)=d$. Then $\pmb{\delta}(p)=d$, moreover there is $\phi\in p$ such that $\pmb{\delta}(\phi)=\pmb{\delta}(p)$.

\end{theorem}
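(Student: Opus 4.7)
The plan is to produce a specific formula $\phi_0\in p$ realising the infimum of $\pmb{\delta}$ on $p$, and then to bound $\pmb{\delta}(\phi(K))$ from below for every other $\phi\in p$ by comparing it with the locus of $a$.

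First I will choose a finite $A_0\subseteq A$ with $\trfdeg(a/A_0)=d$; such an $A_0$ exists because a finite collection of difference-polynomial witnesses certifies that $a$ has a transformal transcendence basis of size $d$. Let $D:=\mathbb{Z}[A_0]_\sigma$ and take finite generators $P_1,\ldots,P_N$ of the reflexive prime difference ideal $I(a/D)\subseteq D[x]_\sigma$ as a perfect difference ideal (Fact~\ref{RittRaud}); then $X:=V(P_1,\ldots,P_N)$ is irreducible over $\Frac(D)$ with $\tdim_D(X)=d$. Set $\phi_0(x):=\bigwedge_i P_i(x)=0$, so that $\phi_0\in p$ and $\phi_0(K)=X(K)$. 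The case $d=0$ is immediate: Remark~\ref{remk-delta<=} gives $\pmb{\delta}(\phi_0(K))\leq 0$, while \L o\'{s}'s theorem and $a\in\phi(K)$ force $\pmb{\delta}(\phi(K))\geq 0$ for every $\phi\in p$, so $\pmb{\delta}(p)=0=d$ is realised by $\phi_0$.

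Assume $d>0$. Applying Lemma~\ref{lem-deltaCount} to $X$ yields a subvariety $X_\xi\subseteq X$ over $D$ with $\tdim_D(X_\xi)<d$. Since $\tdim_A(X_\xi)\leq\tdim_D(X_\xi)<d$ while $\trfdeg(a/A)=d$, the tuple $a$ cannot lie in $X_\xi(K)$, so $a\in(X\setminus X_\xi)(K)\neq\emptyset$ and the lemma gives $\pmb{\delta}(\phi_0(K))=d$; combined with Remark~\ref{remk-delta<=} this handles the ``moreover'' clause. For the lower bound $\pmb{\delta}(p)\geq d$, take arbitrary $\phi\in p$; putting $\phi$ in disjunctive normal form and selecting a disjunct satisfied by $a$, I may assume $\phi(x)=\bigwedge_i P_i'(x)=0\wedge\bigwedge_j Q_j(x)\neq 0$ with $P_i'\in I(a/A)$ and $Q_j\notin I(a/A)$. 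Writing $F_A$ for the difference subfield of $K$ generated by $A$ and $X^A:=V(I(a/F_A))$ for the locus of $a$ over $F_A$ (irreducible, transformal dimension $d$), the inclusions $P_i'\in I(X^A)$ give $X^A\subseteq V(P_1',\ldots,P_k')$ and hence $\phi(K)\supseteq(X^A\setminus\bigcup_j V(Q_j))(K)$. Apply Lemma~\ref{lem-deltaCount} to $X^A$ to obtain $X^A_\xi$ of transformal dimension $<d$ over $F_A$, and set $Z:=X^A_\xi\cup(X^A\cap\bigcup_j V(Q_j))$, so that $\tdim_{F_A}(Z)<d$ and $a\notin Z(K)$.

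The main obstacle is to establish $\pmb{\delta}((X^A\setminus Z)(K))=d$. Replaying the proof of Lemma~\ref{lem-deltaCount} with witness $x:=a$, Theorem~\ref{thm-finiteryCount} (via the twisting reduction and the Jacobian argument) produces, for ultrafilter-many $i$, an absolutely irreducible component $W$ of algebraic dimension $d$ inside the Frobenius reduction $M_{q_i}((X^A)^{\eta_i})$ which contains the specialisation $a_i$ and satisfies $|W(\Ff_{p_i^{n_i}})|\geq p_i^{dn_i}-q_i^{c}p_i^{n_i(d-1/2)}$. Since Lemma~\ref{L:Trasnf-AlgDim} forces $M_{q_i}(Z^{\eta_i})$ to have algebraic dimension $<d$, its intersection with the irreducible $d$-dimensional $W$ is a proper closed subvariety of $W$ and thus contributes at most $q_i^{c'}p_i^{n_i(d-1)}$ rational points (via Fact~\ref{bezoutineq}, Lemma~\ref{lem-degreeBound} and the standard point count on varieties of dimension $<d$). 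Subtracting yields $|W\setminus M_{q_i}(Z^{\eta_i})|\geq p_i^{dn_i}-q_i^{c}p_i^{n_i(d-1/2)}-q_i^{c'}p_i^{n_i(d-1)}$, and since $m_i/n_i\to 0$ both error terms are of lower order in the coarse dimension. Hence $\pmb{\delta}((X^A\setminus Z)(K))=d$, and $\pmb{\delta}(\phi(K))\geq d$ follows.
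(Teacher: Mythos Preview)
Your overall strategy is sound, but it takes a more laborious route than the paper and contains one genuine gap.

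The paper never re-enters the proof of Theorem~\ref{thm-finiteryCount}. Instead, given an arbitrary $\phi\in p$, it intersects with the locus $Y$ of $a$, reduces to a basic open set $Z\subseteq Y$ (the non-vanishing locus of some polynomial on $Y$), and then uses the standard quantifier-free bijection $f:(x_1,\ldots,x_n)\mapsto(x_1,\ldots,x_n,1/Q(x))$ to identify $Z$ with a \emph{closed} difference variety $X\subseteq\Aa^{n+1}$ of transformal dimension $d$. Since $\trfdeg(f(a)/A)=d$, the point $f(a)$ avoids the special subvariety $X_\xi$ furnished by Lemma~\ref{lem-deltaCount}, and that lemma applies as a black box to give $\pmb{\delta}(\phi(K))\geq\pmb{\delta}(X(K))=d$. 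No subtraction of bad loci, no unpacking of the Frobenius-reduction machinery.

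Your approach instead keeps the locally closed set and subtracts the closed locus $Z:=X^A_\xi\cup(X^A\cap\bigcup_j V(Q_j))$. The gap is in the sentence ``Lemma~\ref{L:Trasnf-AlgDim} forces $M_{q_i}(Z^{\eta_i})$ to have algebraic dimension $<d$'': that lemma equates $\dim M_{q_i}(Z^{\eta_i})$ with $\tdim_{K_{q_i}}(Z^{\eta_i})$, but you only know $\tdim_{F_A}(Z)<d$, and transformal dimension can go \emph{up} under specialisation. For the $X^A_\xi$ summand this is handled inside Theorem~\ref{thm-finiteryCount}, but for the summands $X^A\cap V(Q_j)$ you must first invoke Lemma~\ref{lem:local} (enlarging the domain by finitely many inverses) before Lemma~\ref{L:Trasnf-AlgDim} applies. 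Once this is done, the detour through the component $W$ containing $a_i$ is unnecessary: since $\pmb{\delta}(X^A(K))=d$ and $\pmb{\delta}(Z(K))\leq d-1$, the crude inequality $|(X^A\setminus Z)_i|\geq|X^A_i|-|Z_i|$ already yields $\pmb{\delta}((X^A\setminus Z)(K))=d$.
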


\begin{proof}
We may assume $A$ is a difference subfield of $K$.
By assumption, $\mbox{trf.deg}(a/A)=d$. Let $Y$ be defined by $I(a/A)$, then $\tdim_A(Y)=d$.

Let $\phi\in p(x)$, to prove $\pmb{\delta}(p)=d$, it is enough to show that $\pmb{\delta}(\phi(K)\cap Y(K))=d$, hence we may assume $\phi(x)$ implies $Y$. Note that $\phi(x)$ is a disjunction of conjunctions of equations and negations of equations of polynomials. Since $\pmb{\delta}$ of a finite union is the maximum, we may assume $\phi(x)$ defines a basic open difference variety $Z$, i.e. it is the non-vanishing set of some difference polynomial restricted to $Y$. Hence, $Z$ is in quantifier-free definable bijection $f$ with some difference variety $X$ over $A$. 
Since $Z\subseteq Y$ and $Z\in p(x)$, $d=\tdim_A(Z)=\tdim_A(X)$. It is enough to show that $\pmb{\delta}(X(K))=d$.

By Remark~\ref{remk-delta<=}, we only need to show $\pmb{\delta}(X(K))\geq d$.
Note that $\pmb{\delta}(X(K))\geq 0$ holds trivially if $d=0$. We may assume $d>0$. By Lemma~\ref{lem-deltaCount}, there is some $X_\xi$ defined over $A$ of transformal dimension $<d$, such that if $X(K)$ has a point outside $X_\xi(K)$, then $\pmb{\delta}(X(K))=d$. Thus, it is enough to show that $f(a)\not\in X_{\xi}(K)$ (as $f(a)\in X(K)$ by assumption). Note that $\trfdeg(f(a)/A)=d$, since $f$ is a definable bijection. We conclude $f(a)\not\in X_{\xi}(K)$, as $X_{\xi}$ has transformal dimension $<d$ over $A$.

\end{proof}

\begin{theorem}\label{lem: dimes equal for existential}
Let $A$ be a countable difference subfield of $K$. 
Given tuples $a,b$ in $K$, let $r(x,y):=\qftp(a,b/A)$ and let $d:=\trfdeg(a/A)$. Then $d=\pmb{\delta}(\exists y r(x,y))$, where $\exists y r(x,y):=\{\exists y\phi(x,y)\mid \phi(x,y)\in r\}$. 
\end{theorem}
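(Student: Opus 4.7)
The plan is to establish both inequalities in $\pmb{\delta}(\exists y r) = d$.

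\textbf{Upper bound.} Every quantifier-free formula $\psi(x) \in p := \qftp(a/A)$ lies in $r$ (viewed with a dummy $y$-variable), and $\exists y\,\psi(x) \equiv \psi(x)$. Hence $p \subseteq \exists y\, r$ as partial types, so Theorem~\ref{thm-main2} yields $\pmb{\delta}(\exists y\, r) \leq \pmb{\delta}(p) = d$.

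\textbf{Lower bound.} Fix $\phi(x,y) \in r$; the goal is to show $\pmb{\delta}((\exists y\, \phi)(K)) \geq d$. Set $d' := \trfdeg(a,b/A)$, so Theorem~\ref{thm-main2} applied to $r$ gives $\pmb{\delta}(r) = d'$, and applied to $\qftp(b/A(a)_\sigma)$ gives $\pmb{\delta}(\qftp(b/A(a)_\sigma)) = d' - d$. Pick $\phi_1 \in r$ with $\pmb{\delta}(\phi_1) = d'$ and $\chi(y) \in \qftp(b/A(a)_\sigma)$ with $\pmb{\delta}(\chi(K)) = d' - d$; writing $\chi(y) = \chi_0(a, y)$ for a quantifier-free $\chi_0(x,y)$ over $A$, note $\chi_0 \in r$.

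The key technical step is to produce a quantifier-free formula $\mu(x) \in p$ such that $\mu(a')$ implies $\tdim(\chi_0(a', y)) \leq d' - d$, which via Remark~\ref{rem-upperBound} yields a uniform fiber bound $|\chi_0(a', M_i)| \leq p_i^{(d'-d) n_i + c_0 m_i}$ for ultrafilter-many $i$, where $c_0$ depends only on the complexity of $\chi_0$. To construct $\mu$, I expect to use upper semi-continuity of transformal fiber dimension in the uniformly definable family $\{\chi_0(a',y)\}_{a'}$: the locus $\{a' : \tdim(\chi_0(a',y)) \geq d'-d+1\}$ should be Zariski-Cohn closed over $A$, as a consequence of the uniform equidimensionality of Theorem~\ref{thm-equiDim} applied to $\chi_0$ together with Lemma~\ref{ultratrd}. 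Since $\tdim(\chi_0(a, y)) = d' - d$, the basic open complement contains $a$ and provides the desired quantifier-free $\mu \in p$.

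Setting $\phi^* := \phi \wedge \phi_1 \wedge \chi_0 \wedge \mu \in r$, sandwiching gives $\pmb{\delta}(\phi^*) = d'$ (from $\pmb{\delta}(r) \leq \pmb{\delta}(\phi^*) \leq \pmb{\delta}(\phi_1)$), and the uniform fiber bound $|\phi^*(a', M_i)| \leq p_i^{(d'-d) n_i + c_0 m_i}$ holds for every $a' \in \pi_x(\phi^*(M_i))$. Fiber counting then yields
\[|\phi^*(M_i)| = \sum_{a' \in \pi_x(\phi^*(M_i))} |\phi^*(a', M_i)| \leq |\pi_x(\phi^*(M_i))| \cdot p_i^{(d'-d) n_i + c_0 m_i},\]
so $|\pi_x(\phi^*(M_i))| \geq p_i^{d n_i - o(n_i) - c_0 m_i}$; since $m_i/n_i \to 0$, we conclude $\pmb{\delta}(\pi_x(\phi^*(K))) \geq d$, and therefore $\pmb{\delta}((\exists y\, \phi)(K)) \geq d$ for every $\phi \in r$. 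The hard part will be establishing the required upper semi-continuity of transformal fiber dimension in quantifier-free form, which must be extracted from the Frobenius-equidimensionality results of Section~\ref{sec:equi_dim} rather than appealing directly to definability of dimension in $\mathrm{ACFA}$.
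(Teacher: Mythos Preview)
Your upper bound argument matches the paper's exactly.

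For the lower bound, the paper takes a cleaner route. It invokes a general sub-additivity property of coarse dimension (cited from \cite{HrushovskiStable} and \cite{ArtemNotes}) at the level of \emph{types}:
\[
\pmb{\delta}(r(x,y)) \leq \pmb{\delta}(\exists y\, r(x,y)) + \sup\{\pmb{\delta}(r(a',y)) : a'\in K,\ \exists b'\in K,\ K\models r(a',b')\}.
\]
The key point you underuse is the \emph{completeness} of $r$: any $a'$ in the range comes with $b'$ such that $\qftp(a',b'/A)=r$, hence $a'\models p$ and $r(a',y)=\qftp(b'/A,a')$ is again a complete quantifier-free type with $\trfdeg(b'/A a')=m:=d'-d$. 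Theorem~\ref{thm-main2} then gives $\pmb{\delta}(r(a',y))=m$ for every such $a'$, so the supremum equals $m$ and one rearranges to $d\leq\pmb{\delta}(\exists y\,r)$. No semi-continuity statement is needed.

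Your approach is essentially a hands-on proof of this sub-additivity via fiber counting at the formula level, which is legitimate but forces you to control fibers of $\chi_0(a',y)$ for $a'$ that need \emph{not} realise the full type $p$. That is why you need the quantifier-free $\mu$ bounding $\tdim(\chi_0(a',y))$. Two remarks on this step. First, your pointer to Section~\ref{sec:equi_dim} is off target: equidimensionality of Frobenius reductions says nothing about how transformal fibre dimension varies in a family. The relevant input is Lemma~\ref{lem:local}, which (applied to the variety cut out by $\chi_0$ over the difference domain $A[a]_\sigma$) produces finitely many elements to invert in $A[a]_\sigma$ so that under every specialisation the transformal dimension does not go up; combined with generators for $I(a/A)$ this yields precisely your $\mu\in p$. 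Second, your choice of $\chi$ via $\pmb{\delta}(\chi)=d'-d$ only gives a lower bound on $\tdim$; you should instead take $\chi_0(a,y)$ to cut out the variety of $I(b/A(a)_\sigma)$, which has $\tdim=d'-d$ on the nose, before applying Lemma~\ref{lem:local}.

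In short: your route can be made to work once you redirect the ``hard part'' to Lemma~\ref{lem:local}, but the paper's argument avoids all of this by working at the type level and exploiting completeness of $r$ together with off-the-shelf sub-additivity of $\pmb{\delta}$.
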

\begin{proof}
Let $m\geq0$ such that $d+m=\trfdeg(a,b/A)$. Let $p(x)$ be the restriction of $r(x,y)$ to the $x$ variable. Since $r(x,y)$ is complete and quantifier-free, so is $p(x)$.

 By sub-additivity of $\pmb{\delta}$, see \cite[Section 5.1]{HrushovskiStable} and \cite[Proposition 2.31(5), Example 2.30(4)]{ArtemNotes}, we get \[\pmb{\delta}(r(x,y))\leq \pmb{\delta}(\exists y r(x,y))+\sup\{\pmb{\delta}(r(a',y)):a'\in K, \text{ exists }b'\in K, K\models r(a',b')\}.\] Choose $a'\in K$ which reaches the supreme on the right-hand side (it is possible by Theorem~\ref{thm-main2}). Hence \[\pmb{\delta}(r(x,y))\leq \pmb{\delta}(\exists y r(x,y))+\pmb{\delta}(r(a',y)).\]

  Since $r(x,y)=\qftp(a,b/A)=\qftp(a',b'/A)$ and $r(a',y)=\qftp(b'/A,a')$ for some $a,b,a',b'\in K$. Then $\trfdeg(a',b'/A)=d+m$ and $\trfdeg(a'/A)=d$ as $a'\models p(x)$. Therefore, $\trfdeg(b'/A,a')=m$.
  By Theorem \ref{thm-main2}, $\pmb{\delta}(r(a',y))=m$. Hence, again by Theorem \ref{thm-main2}, \[d+m=\pmb{\delta}(r(x,y))\leq \pmb{\delta}(\exists y r(x,y))+m,\] and thus $d\leq \pmb{\delta}(\exists y r(x,y))$. 

  For the other direction, note that $p(x)$ is a subcollection of $\exists y r(x,y)$, and so by Theorem \ref{thm-main2}, $d=\pmb{\delta}(p(x))\geq \pmb{\delta}(\exists y r(x,y))$.
\end{proof}

\begin{corollary}
 For any existential-definable set $\phi(x)$ over a countable subset $A$ of $K$, \[\pmb{\delta}(\phi(K))=\max\{\trfdeg(a/A):a\in\phi(K).\}\] In particular, the coarse dimension $\pmb{\delta}$ takes integer value for any existential-definable set in $K$.
\end{corollary}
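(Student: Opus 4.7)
The plan is to prove both inequalities in the claimed equality, with the lower bound coming directly from Theorem~\ref{lem: dimes equal for existential} and the upper bound from a Stone space compactness argument exploiting the saturation of $K$. Write $\phi(x)=\exists y\,\psi(x,y)$ with $\psi$ quantifier-free over $A$, and let $d:=\max\{\trfdeg(a/A):a\in\phi(K)\}$, with the convention $\max\emptyset=-\infty$ (in which case both sides are $-\infty$ and the statement is vacuous). Thus we may assume $d\in\{0,1,\ldots,|x|\}$.

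For the lower bound, pick $a\in\phi(K)$ realising the maximum together with a witness $b\in K$ for which $\psi(a,b)$ holds, and set $r(x,y):=\qftp(a,b/A)$. Since $\psi\in r$, the formula $\phi(x)$ itself lies in the partial type $\exists y\,r(x,y)$, and so by Theorem~\ref{lem: dimes equal for existential},
\[
\pmb{\delta}(\phi(K))\;\geq\;\pmb{\delta}(\exists y\,r)\;=\;\trfdeg(a/A)\;=\;d.
\]

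For the upper bound, I plan the following compactness argument. Being an ultraproduct of finite structures over a non-principal ultrafilter on a countable index set, $K$ is $\aleph_1$-saturated; as $A$ is countable, every quantifier-free type $p\in S^{\mathrm{qf}}(A)$ consistent with $\phi$ is realised by some $\tilde a\in\phi(K)$, so $\trfdeg(p)=\trfdeg(\tilde a/A)\leq d$. Applying the ``moreover'' clause of Theorem~\ref{thm-main2}, select for each such $p$ a formula $\theta_p\in p$ with $\pmb{\delta}(\theta_p)=\trfdeg(p)\leq d$. The locus $[\phi]\subseteq S^{\mathrm{qf}}(A)$ of quantifier-free types consistent with $\phi$ is closed: its complement is the open union of the clopens $[\theta]$ ranging over quantifier-free $\theta$ with $\theta\wedge\phi$ inconsistent in $\Th(K)$. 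Hence $[\phi]$ is compact, the clopen cover $\{[\theta_p]:p\in[\phi]\}$ admits a finite subcover $[\theta_{p_1}],\ldots,[\theta_{p_N}]$, and translating back to $K$ yields $\phi(K)\subseteq\bigcup_{i\leq N}\theta_{p_i}(K)$. Finite additivity of coarse dimension on definable sets then gives $\pmb{\delta}(\phi(K))\leq\max_i\pmb{\delta}(\theta_{p_i})\leq d$. The integrality assertion is immediate since $d\in\mathbb{Z}_{\geq 0}\cup\{-\infty\}$.

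The main technical step is the upper bound, and the key subtlety lies in invoking $\aleph_1$-saturation of $K$: without it, a quantifier-free type $p$ consistent with $\phi$ only in some proper elementary extension could in principle carry $\trfdeg(p)>d$, breaking the estimate $\pmb{\delta}(\theta_p)\leq d$ and hence the conclusion. Saturation is precisely what ensures that the transformal transcendence degrees of quantifier-free types compatible with $\phi$ are already witnessed inside $\phi(K)$.
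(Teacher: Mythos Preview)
Your proof is correct and follows essentially the same route as the paper's own argument: the lower bound via Theorem~\ref{lem: dimes equal for existential} is identical, and for the upper bound both you and the paper cover $\phi(K)$ by quantifier-free formulas $\theta_p$ (respectively $\psi_a$) of the right $\pmb{\delta}$-value coming from Theorem~\ref{thm-main2}, then extract a finite subcover by compactness. Your version is slightly more explicit in that you work in the Stone space $S^{\mathrm{qf}}(A)$ and invoke $\aleph_1$-saturation of $K$ to ensure every quantifier-free type consistent with $\phi$ is realised in $\phi(K)$; the paper's phrase ``by compactness, since there are countably many quantifier-free formulas over $A$'' hides exactly this saturation step, so your added care is warranted rather than superfluous. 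One minor point: you assume the index set of the ultrafilter is countable to get $\aleph_1$-saturation, which the paper does not state explicitly; it would be cleaner to note that any countably incomplete ultrafilter suffices (and non-principal ultrafilters on any index set are countably incomplete absent measurable cardinals).
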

\begin{remark}
It is possible to have a difference variety $X$ defined over $A$ with transformal dimension $d$, but $\max\{\mbox{trf.deg}(a/A):a\in X(K)\}<d$. 
\end{remark}
\begin{proof}

    Note that given any existential formula $\phi(x)=\exists y\psi(x,y)$ over $A$, for any $a\in \phi(K)$, there is some $b$ in $K$ such that $K\models \psi(a,b)$. Let $r(x,y):=\qftp(a,b/A)$ and then $\phi(x)$ is contained in the partial type $\exists y r(x,y)$. By definition, $\pmb{\delta}(\phi(K))\geq\pmb{\delta}(\exists y r(x,y))=\trfdeg(a/A)$. 
    Therefore, $\pmb{\delta}(\phi(K))\geq \max\{\mbox{trf.deg}(a/A):a\in\phi(K)\}$. 
    
    For the other direction, for any $a\in\phi(K)$, let $p(x)=\qftp(a/A)$, by Theorem \ref{thm-main2}, there is a quantifier-free $\psi_a(x)\in p(x)$, such that $\pmb{\delta}(\psi_a(K))=\trfdeg(a/A)$. By construction, 
    $\phi(K)\subseteq \bigcup_{a\in\phi(K)}\psi_a(K)$. So by compactness, since there are countably many quantifier-free formulas over $A$, $\phi(K)$ is covered by finitely many of them. Hence $\pmb{\delta}(\phi(K))\leq \max\{\pmb{\delta}(\psi_a(K))=\mbox{trf.deg}(a/A):a\in\phi(K)\}$, and we get the desired equality.
\end{proof}

\begin{remark}
    In the setting of Theorem~\ref{thm-main2}, one might wonder if $K=(K,\sigma)$ is \emph{transformally PAC}, just as $K$ is PAC (pseudo-algebraically closed) as a pure field.\footnote{We would like to thank Yuval Dor for raising this question and for engaging in helpful discussions around it.} One natural definition of transformally PAC would be the following: For a perfect inversive difference field $F$  and a difference field extension $F\subseteq E$, we say $E$ is a \emph{$\sigma$-regular extension of $F$}, if $F$ is relatively  $\sigma$-algebraically closed in $E$, namely for any $a\in E$, we have $a\in F$ whenever $\trfdeg(a/F)=0$. And $F$ is called transformally PAC if $F$ is existentially closed in any $\sigma$-regular extension.

    However, $K$ is not transformally PAC under this definition. Suppose the characteristic of $K$ is not $2$. Choose $c\in K$ not a square in $K$. Consider the polynomial $P(X,Y)$:= $X\sigma(X)-cY^2$. Then $K$ has no 
    solution of $P(X,Y)=0$. Since otherwise, if $(a,b)=(a_i,b_i)_{i\to\mathcal{U}}$ is a solution and $c=(c_i)_{i\to\mathcal{U}}$. Then, for ultrafilter-many $i$, $a_ia_i^{p_i^{m_i}}-c_ib_i^2=0$ and hence $c_i$ is a square, as $p_i^{m_i}+1$ is even, contradicting that $c$ is not a square. Let $x$ be transformally transcendental over $K$ and $y$ be such that $cy^2=x\sigma(x)$. Let $E=K(x,y)_\sigma$. By construction, $P(x,y)=0$, so $P(X,Y)$ has a solution in $E$. Let $y_0=y$ and $y_i$ be such that $y_i^2=\sigma^i(x)\sigma^{i+1}(x)/\sigma^i(c)$. Note that $E$ as a pure field is generated by $x,y_0,y_1,\ldots,y_i,\ldots$. It is clear that $E_i=K(x,y_0,\cdots,y_i)$ is purely transcendental over $E_{i-1}$ by construction. Thus $E=\bigcup_iE_i$ is purely transcendental over $K$.
    However, $K$ is relatively $\sigma$-algebraically closed in $E$. Indeed, $x,\sigma(x),\sigma^2(x),\ldots$ is a transcendence basis of $E/K$. Suppose there is $(K,\sigma)\subsetneq (L,\sigma)\subseteq (E,\sigma)$ with $(L,\sigma)$ $\sigma$-algebraic over $K$. Note also that $L/K$ is not algebraic. Then $x,\sigma(x),\sigma^2(x),\ldots$ are algebraically dependent over $L$. Hence, $E$ is a $\sigma$-algebraic extension of $L$, and is also $\sigma$-algebraic over $K$ (since $L/K$ is), a contradiction. 
\end{remark}

One might wonder how far Lemma~\ref{lem: dimes equal for existential} can be extended beyond existential formulas. 
In \cite{ZouDifference}, it was conjectured that Lemma~\ref{lem: dimes equal for existential} is true for all formulas in pseudofinite difference fields $K=\prod_{i\to\mathcal{U}} DF(p_i,n_i,m_i)$, as long as $n_i$ grows sufficiently faster than $m_i$. In the same paper, it was proved that under this condition, the coarse dimension $\pmb{\delta}$ of any definable set is integer-valued and is bounded above by the maximum of transformal transcendence degrees of elements in this definable set. However, this is not true when the characteristic of $K$ is positive. Here we include an example from the unpublished work-in-progress of Michael Benedikt and EH.

\begin{example}
    Let $K=\prod_{i\to\mathcal{U}} DF(p,n_i,m_i)$ with $m_i\vert n_i$ and $\lim_{i\to\mathcal{U}} p^{m_i}/n_i=0$. Let $F=\prod_{i\to\mathcal{U}}\mathbb{F}_{p^{m_i}}$ be the fixed field of $K$. Let $y\in K$  be such that $y\not\in \acl(F)$. Let $X:=\{1/(x-y):x\in F\}$. Consider $K$ as an $\mathbb{F}_p$-vector space. Note that $X$ is linearly independent over $\mathbb{F}_p$, since otherwise, $y$ will satisfy some non-trivial polynomial over $X$. We claim that the $\mathbb{F}_p$-span $\langle X\rangle$ of $X$ is definable. Indeed, by \cite[Lemma 3.9]{MS08}, $K$ has a non-degenerate symmetric bilinear form $b(-,-):K\times K\to\mathbb{F}_p$ which is $\mathcal{L}_{ring}$-definable. Now it is easy to verify that $\langle X\rangle$ is defined by $\psi(x):=\forall u\forall z(z\in X\land b(u,z)=0\to b(u,x)=0)$. Suppose $Y:=\langle X\rangle=\prod_{i\to\mathcal{U}} Y_i$. Then clearly, $|Y_i|\leq p^{p^{m_i}}$. Hence, $\pmb{\delta}(Y)=0$ by the assumption that $\lim_{i\to\mathcal{U}} p^{m_i}/n_i=0$. We claim that $|Y|\geq |X|^n$ for any $n$. Indeed, given $n$, define a function $f:X^n\to Y$ by $(a_1,\ldots,a_n)\mapsto \sum_{i\leq n} a_i$. Then for any tuple $(a_1,\ldots,a_n)$ satisfying $a_i\neq a_j$ for $i\neq j$, we have $f(a_1,\ldots,a_n)=f(b_1,\ldots,b_n)$ if and only if $(b_1,\ldots,b_n)$ is a permutation of $(a_1,\ldots,a_n)$. Hence, $|Y|\geq |f(X^n)|\geq \frac{1}{n!}|X|^n\geq |X|^{n-1}$. But the last claim implies that $\max\{\trfdeg(a):a\in Y\}>0$. Since otherwise, by compactness, $Y\subseteq Z$ where $Z$ is defined by some non-trivial difference polynomial equation $P(x)=0$. And by Remark~\ref{rem-upperBound}, $|Z(DF(p,n_i,m_i))|\leq p^{cm_i}$ for some constant $c$. Hence, $|Y|\leq |Z(K)|\leq |X|^c$, a contradiction.
\end{example}

\bibliographystyle{alpha}
\bibliography{ref}

\begin{thebibliography}{CvdDM92}

\bibitem[Ax68]{Ax-fin}
James Ax.
\newblock The elementary theory of finite fields.
\newblock {\em Ann. of Math. (2)}, 88:239--271, 1968.

\bibitem[Azg10]{Azg10}
Salih Azg{\i}n.
\newblock Valued fields with contractive automorphism and {K}aplansky fields.
\newblock {\em J. Algebra}, 324(10):2757--2785, 2010.

\bibitem[CH99]{ChHr99}
Zo{\'e} Chatzidakis and Ehud Hrushovski.
\newblock Model theory of difference fields.
\newblock {\em Trans. Amer. Math. Soc.}, 351(8):2997--3071, 1999.

\bibitem[CH14]{ChHi14}
Artem Chernikov and Martin Hils.
\newblock Valued difference fields and {$\text{NTP}_2$}.
\newblock {\em Israel J. Math.}, 204(1):299--327, 2014.

\bibitem[Cha11]{zoeNotes}
Zo{\'e} Chatzidakis.
\newblock Introduction to difference algebra and model theory of difference
  fields.
\newblock Course notes, available at
  \url{https://www.math.ens.psl.eu/~zchatzid/papiers/Orsay-ZC.pdf}, 2011.

\bibitem[Cha14]{zoe-icm}
Zo\'{e} Chatzidakis.
\newblock Model theory of difference fields and applications to algebraic
  dynamics.
\newblock In {\em Proceedings of the {I}nternational {C}ongress of
  {M}athematicians---{S}eoul 2014. {V}ol. {II}}, pages 1--14. Kyung Moon Sa,
  Seoul, 2014.

\bibitem[Che19]{ArtemNotes}
Artem Chernikov.
\newblock Pseudofinite model theory (math 223m, ucla, fall 2019).
\newblock Course notes, available at
  \url{https://www.math.ucla.edu/~chernikov/teaching/19F-MATH223M/Notes.pdf},
  2019.

\bibitem[CHP02]{CHP}
Zoé Chatzidakis, Ehud Hrushovski, and Ya'acov Peterzil.
\newblock {Model theory of difference fields, II: Periodic ideals and the
  trichotomy in all characteristics}.
\newblock {\em Proceedings of the London Mathematical Society}, 85(2):257--311,
  09 2002.

\bibitem[CM06]{cafure}
Antonio Cafure and Guillermo Matera.
\newblock Improved explicit estimates on the number of solutions of equations
  over a finite field.
\newblock {\em Finite Fields and Their Applications}, 12(2):155--185, 2006.

\bibitem[Coh65]{cohn}
Richard~M Cohn.
\newblock {\em Difference algebra}, volume~17.
\newblock Interscience Publishers, 1965.

\bibitem[CvdDM92]{chatzidakis1992definable}
Zo\'{e} Chatzidakis, Lou van~den Dries, and Angus Macintyre.
\newblock Definable sets over finite fields.
\newblock {\em J. Reine Angew. Math.}, 427:107--135, 1992.

\bibitem[Del74]{deligne-weil-I}
Pierre Deligne.
\newblock La conjecture de {W}eil. {I}.
\newblock {\em Inst. Hautes \'{E}tudes Sci. Publ. Math.}, (43):273--307, 1974.

\bibitem[DH22]{DoHr22}
Yuval Dor and Ehud Hrushovski.
\newblock Specialization of difference equations and high {F}robenius powers.
\newblock arXiv:2212.05366, 2022.

\bibitem[Dwo60]{dwork}
Bernard Dwork.
\newblock On the rationality of the zeta function of an algebraic variety.
\newblock {\em Amer. J. Math.}, 82:631--648, 1960.

\bibitem[Gro95]{grothendieck-L-function}
Alexander Grothendieck.
\newblock Formule de {L}efschetz et rationalit\'{e} des fonctions {$L$}.
\newblock In {\em S\'{e}minaire {B}ourbaki, {V}ol. 9}, pages Exp. No. 279,
  41--55. Soc. Math. France, Paris, 1995.

\bibitem[Har77]{Hartshorne}
Robin Hartshorne.
\newblock {\em Algebraic geometry}.
\newblock Graduate Texts in Mathematics, No. 52. Springer-Verlag, New
  York-Heidelberg, 1977.

\bibitem[Hru04]{HrushovskiFrobenius}
Ehud Hrushovski.
\newblock The elementary theory of the {F}robenius automorphisms.
\newblock arXiv:math/0406514, 2004.

\bibitem[Hru12]{HrushovskiStable}
Ehud Hrushovski.
\newblock Stable group theory and approximate subgroups.
\newblock {\em J. Amer. Math. Soc.}, 25(1):189--243, 2012.

\bibitem[LR15]{Lachaud}
Gilles Lachaud and Robert Rolland.
\newblock On the number of points of algebraic sets over finite fields.
\newblock {\em J. Pure Appl. Algebra}, 219(11):5117--5136, 2015.

\bibitem[LW54]{lang-weil-original}
Serge Lang and Andr\'{e} Weil.
\newblock Number of points of varieties in finite fields.
\newblock {\em Amer. J. Math.}, 76:819--827, 1954.

\bibitem[MS08]{MS08}
Dugald Macpherson and Charles Steinhorn.
\newblock One-dimensional asymptotic classes of finite structures.
\newblock {\em Trans. Amer. Math. Soc.}, 360(1):411--448, 2008.

\bibitem[She78]{classification}
Saharon Shelah.
\newblock {\em Classification theory and the number of nonisomorphic models},
  volume~92 of {\em Studies in Logic and the Foundations of Mathematics}.
\newblock North-Holland Publishing Co., Amsterdam-New York, 1978.

\bibitem[SV22]{ShuVar21}
K.~V. Shuddhodan and Yakov Varshavsky.
\newblock The {H}rushovski-{L}ang-{W}eil estimates.
\newblock {\em Algebr. Geom.}, 9(6):651--687, 2022.

\bibitem[TZ12]{tent-ziegler}
Katrin Tent and Martin Ziegler.
\newblock {\em A course in model theory}, volume~40 of {\em Lecture Notes in
  Logic}.
\newblock Association for Symbolic Logic, La Jolla, CA; Cambridge University
  Press, Cambridge, 2012.

\bibitem[Wei49]{weil49}
Andr{\'e} Weil.
\newblock {Numbers of solutions of equations in finite fields}.
\newblock {\em Bulletin of the American Mathematical Society}, 55(5):497 --
  508, 1949.

\bibitem[WY23]{ez}
Erik Walsberg and Jinhe Ye.
\newblock \'{E}z fields.
\newblock {\em J. Algebra}, 614:611--649, 2023.

\bibitem[Zou21]{ZouDifference}
Tingxiang Zou.
\newblock Pseudofinite difference fields and counting dimensions.
\newblock {\em J. Math. Log.}, 21(1):Paper No. 2050022, 28, 2021.

\end{thebibliography}
\end{document}